\documentclass{amsart}

\usepackage{amsfonts, amssymb, amsmath, eucal, verbatim, amsthm, amscd, enumerate}

\usepackage{graphicx}
\usepackage{framed}
\usepackage{tikz}


\DeclareGraphicsRule{.ai}{pdf}{.ai}{}
\newtheorem{theorem}{Theorem}[section]
\newtheorem{lemma}[theorem]{Lemma}
\newtheorem{proposition}[theorem]{Proposition}
\newtheorem{conjecture}[theorem]{Conjecture}

\theoremstyle{definition}

\theoremstyle{remark}
\newtheorem*{remark}{Remark}

\newtheorem*{notation}{Notation}

\newtheorem*{organisation}{Organisation}
\newtheorem*{acknowledgements}{Acknowledgements}

\newcommand{\vertiii}[1]{{\left\vert\kern-0.25ex\left\vert\kern-0.25ex\left\vert #1 
    \right\vert\kern-0.25ex\right\vert\kern-0.25ex\right\vert}}
\newcommand{\R}{{\mathbb R}}

\parindent0pt
\addtolength{\baselineskip}{0.5\baselineskip}
\addtolength{\parskip}{12pt}

\numberwithin{equation}{section}

\usepackage{setspace}

\begin{document}

\date{\today}

\author[Bez]{Neal Bez}
\address[Neal Bez]{Department of Mathematics, Graduate School of Science and Engineering,
Saitama University, Saitama 338-8570, Japan}
\email{nealbez@mail.saitama-u.ac.jp}
\author[Hong]{Younghun Hong}
\address[Younghun Hong]{Department of Mathematics, Yonsei University, Seoul 03722, Korea}
\email{younghun.hong@yonsei.ac.kr}
\author[Lee]{Sanghyuk Lee}
\address[Sanghyuk Lee]{Department of Mathematical Sciences, Seoul National University, Seoul 151-747, Korea}
\email{shklee@snu.ac.kr}
\author[Nakamura]{Shohei Nakamura}
\address[Shohei Nakamura]{Department of Mathematics and Information Sciences, Tokyo Metropolitan University,
1-1 Minami-Ohsawa, Hachioji, Tokyo, 192-0397, Japan}
\email{nakamura-shouhei@ed.tmu.ac.jp}
\author[Sawano]{Yoshihiro Sawano} 
\address[Yoshihiro Sawano]{Department of Mathematics and Information Sciences, Tokyo Metropolitan University,
1-1 Minami-Ohsawa, Hachioji, Tokyo, 192-0397, Japan}
\email{ysawano@tmu.ac.jp}

\title[Strichartz estimates for orthonormal systems]{On the Strichartz estimates for orthonormal systems of initial data with regularity}

\begin{abstract}
The classical Strichartz estimates for the free Schr\"odinger propagator have recently been substantially generalised to estimates of the form
\[
\bigg\|\sum_j\lambda_j|e^{it\Delta}f_j|^2\bigg\|_{L^p_tL^q_x}\lesssim\|\lambda\|_{\ell^\alpha}
\]
for orthonormal systems $(f_j)_j$ of initial data in $L^2$, firstly in work of Frank--Lewin--Lieb--Seiringer and later by Frank--Sabin. The primary objective is identifying the largest possible $\alpha$ as a function of $p$ and $q$, and in contrast to the classical case, for such estimates the critical case turns out to be $(p,q) = (\frac{d+1}{d},\frac{d+1}{d-1})$. We consider the case of orthonormal systems $(f_j)_j$ in the homogeneous Sobolev spaces $\dot{H}^s$ for $s \in (0,\frac{d}{2})$ and we establish the sharp value of $\alpha$ as a function of $p$, $q$ and $s$, except possibly an endpoint in certain cases, at which we establish some weak-type  estimates. Furthermore, at the critical case $(p,q) = (\frac{d+1}{d-2s},\frac{d(d+1)}{(d-1)(d-2s)})$ for general $s$, we show the veracity of the desired estimates when $\alpha = p$ if we consider frequency localised estimates, and the failure of the (non-localised) estimates when $\alpha = p$; this exhibits the difficulty of upgrading from frequency localised estimates in this context, again in contrast to the classical setting.
\end{abstract}

\maketitle

\section{Introduction and main results}

\subsection{Introduction}The classical Strichartz estimates for the free Schr\"odinger propagator $e^{it\Delta}$ may be stated as\footnote{$A \lesssim B$ means $A \leq CB$ for an appropriate constant $C$}
\begin{equation} \label{e:classical}
\||e^{it\Delta}f|^2\|_{L^p_tL^{q}_x} \lesssim 1
\end{equation}
whenever $\|f\|_{L^2(\mathbb{R}^d)} = 1$, for all spatial dimensions $d \geq 1$ and where $p,q \geq 1$ satisfy $\frac{2}{p} + \frac{d}{q} = d$ and $(p,q,d) \neq (1,\infty,2)$. We note that the endpoint case is $(p,q) = (1,\frac{d}{d-2})$ for $d \geq 3$, proved by Keel and Tao in \cite{KeelTao}, and all other allowable estimates follow by interpolation with the trivial estimate at $(p,q) = (\infty,1)$. When $d=2$, the estimate fails at the endpoint $(p,q) = (1,\infty)$ (see, for example, \cite{MontSmith}), and when $d=1$, the estimate at $(p,q) = (2,\infty)$ is true.

Recently, these estimates have been substantially generalised to the context of orthonormal systems $(f_j)_j$ in $L^2(\mathbb{R}^d)$ in work of Frank--Lewin--Lieb--Seiringer \cite{FLLS} and Frank--Sabin \cite{frank-sabin-1}, resulting in the following.
\begin{theorem}[\cite{FLLS,frank-sabin-1}]\label{t:s=0}
Suppose $d\geq1$. If $p,q\geq1$ satisfy 
$
\frac{2}{p}+\frac{d}{q}=d$,
$
1\leq q<\frac{d+1}{d-1}
$
and $\alpha = \frac{2q}{q+1}$, then
\begin{equation}\label{e:FS}
\bigg\|
\sum_j
\lambda_j
|e^{it\Delta}f_j|^2
\bigg\|_{L^p_tL^q_x}
\lesssim
\|\lambda\|_{\ell^\alpha}
\end{equation}
holds for all orthonormal systems $(f_j)_j$ in $L^2(\mathbb{R}^d)$ and all sequences 
$
\lambda=(\lambda_j)_j$ in $\ell^\alpha(\mathbb{C})
$.
This is sharp in the sense that, for such $p,q$, the estimate fails for all $\alpha > \frac{2q}{q+1}$. Furthermore, when $q = \frac{d+1}{d-1}$, the estimate \eqref{e:FS} holds for all $\alpha < \frac{2q}{q+1}$ and fails when $\alpha = \frac{2q}{q+1}$.
\end{theorem}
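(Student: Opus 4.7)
My plan is the standard Frank--Sabin approach: reduce \eqref{e:FS} by duality to a Schatten-class bound on the free propagator, prove this bound by complex interpolation between a classical Strichartz endpoint and a Hilbert--Schmidt endpoint, and verify sharpness by testing against explicit orthonormal families.

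\emph{Duality.} Assembling the data into the density operator $\gamma := \sum_j \lambda_j |f_j\rangle\langle f_j|$ with $\|\gamma\|_{\mathcal{C}^\alpha} = \|\lambda\|_{\ell^\alpha}$, the pairing of $\sum_j \lambda_j |e^{it\Delta}f_j|^2$ against $V \in L^{p'}_t L^{q'}_x$ equals $\mathrm{tr}\bigl(\gamma\, (e^{it\Delta})^* V e^{it\Delta}\bigr)$. Schatten-$L^p$ duality then renders \eqref{e:FS} equivalent to the operator-theoretic bound
\[
\bigl\| W\, e^{it\Delta} \bigr\|_{\mathcal{C}^{2\alpha'}(L^2_x \to L^2_{t,x})} \lesssim \|W\|_{L^{2p'}_t L^{2q'}_x},
\]
for all multipliers $W$ with $|W|^2 = V$.

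\emph{Complex interpolation.} Embed $W$ in an analytic family $W_z$ and invoke Stein's interpolation theorem on Schatten classes. At the operator-norm endpoint ($\mathcal{C}^\infty$), the Schatten bound reduces via H\"older in spacetime to the classical Strichartz estimate at a suitable admissible pair. At the Hilbert--Schmidt endpoint ($\mathcal{C}^2$), the squared Schatten norm equals the squared $L^2$-norm of the integral kernel $W(t,x) K(t-s, x-y) \overline{W(s,y)}$ and is controlled by the dispersive decay $|K(t,x)| \lesssim |t|^{-d/2}$ combined with a Hardy--Littlewood--Sobolev inequality in the time variable. The value $\alpha = \frac{2q}{q+1}$ then arises as the unique interpolation parameter aligning the two endpoints on the Strichartz line $\frac{2}{p}+\frac{d}{q}=d$, and the HLS step is valid precisely for $q < \frac{d+1}{d-1}$.

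\emph{Sharpness and the critical exponent.} For optimality of $\alpha$ in the range $q < \frac{d+1}{d-1}$, I would test \eqref{e:FS} on explicit orthonormal families -- for instance, semiclassical Gaussian wave packets centred at staggered phase-space locations, or orthogonalised Knapp-type initial data of cardinality $N \sim \delta^{-(d+1)}$ in a parabolic $\delta^{-1}\times\cdots\times\delta^{-1}\times\delta^{-2}$ frequency block -- tracking both sides as the number of packets and the concentration scale vary. For $q = \frac{d+1}{d-1}$, the positive bound at $\alpha < \frac{2q}{q+1}$ follows by real interpolation from the subcritical positive bounds and a trivial $\alpha = 1$ estimate, while failure at $\alpha = \frac{2q}{q+1}$ is obtained by aggregating critical Knapp examples over many dyadic scales to force a logarithmic blow-up that no constant multiple of $\|\lambda\|_{\ell^\alpha}$ can absorb.

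The main obstacle is the Hilbert--Schmidt endpoint of the complex interpolation: the Schr\"odinger kernel is not an $L^2$-function, so the analytic family must incorporate a dispersive regulariser $|t-s|^z$ whose real part must be chosen so that Stein's theorem applies after analytic continuation, and it is precisely the validity of the resulting HLS step that carves out the range $q < \frac{d+1}{d-1}$ appearing in the theorem.
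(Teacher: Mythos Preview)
Your overall strategy---duality to a Schatten bound, then Stein interpolation between a $\mathcal{C}^\infty$ and a $\mathcal{C}^2$ endpoint with the Hardy--Littlewood--Sobolev inequality controlling the latter---is indeed the Frank--Sabin scheme, and the paper sketches exactly this route in the proof of Proposition~\ref{p:s=0Lorentz}. However, two points in your execution would prevent you from reaching the full range $q<\tfrac{d+1}{d-1}$.

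First, the analytic family runs through the \emph{operator}, not through $W$: one takes $T_z$ with Fourier multiplier $(\tau-|\xi|^2)_+^z/\Gamma(z+1)$, so that $T_{-1}=UU^*$. Second, and this is the decisive point, the $\mathcal{C}^\infty$ endpoint is obtained at $\mathrm{Re}(z)=0$ via \emph{Plancherel} (the multiplier $(\tau-|\xi|^2)_+^{i\mathrm{Im}(z)}/\Gamma(1+i\mathrm{Im}(z))$ is bounded), yielding $\|W_1T_zW_2\|_{\mathcal{C}^\infty}\lesssim\|W_1\|_{L^\infty_{t,x}}\|W_2\|_{L^\infty_{t,x}}$. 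Your proposal to use the classical Strichartz estimate for the $\mathcal{C}^\infty$ endpoint only bounds the \emph{physical} operator $WUU^*\overline W$ (that is, $z=-1$) in operator norm; since the target of the interpolation is precisely $z=-1$, this leaves nothing to interpolate from. One needs a $\mathcal{C}^\infty$ bound at a value of $z$ \emph{away} from the physical point, and Plancherel for the Fourier-side family is what supplies it. The approach you describe (regulariser $|t-s|^z$ and Strichartz at one endpoint) is closer to the earlier argument of \cite{FLLS}, which, as the paper notes, only reaches $q\le\tfrac{d+2}{d}$; the Plancherel endpoint is the new ingredient in \cite{frank-sabin-1} that pushes to $q<\tfrac{d+1}{d-1}$.

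On sharpness your wave-packet idea is correct, and in fact the paper's own necessity argument (Section~\ref{section:strong}) carries it out explicitly with disjointly Fourier-supported bumps $\widehat{f_v}=R^{d/2}\chi(R(\cdot-v))$, $v\in R^{-1}\mathbb{Z}^d\cap[2,4)^d$; this is more elementary than the operator-theoretic sharpness proofs originally given in \cite{FLLS} and \cite{frank-sabin-2}. Your interpolation route for the positive endpoint claim ($\alpha<\tfrac{2q}{q+1}$ at $q=\tfrac{d+1}{d-1}$) is fine; the failure at $\alpha=\tfrac{2q}{q+1}$ in \cite{FLLS} is again operator-theoretic rather than the dyadic Knapp aggregation you suggest, though your heuristic is reasonable.
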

To be more precise with regard to attribution, the range $q \in [1,\frac{d+2}{d}]$ was established first in \cite{FLLS}, as well as the necessary condition $\alpha \leq \frac{2q}{q+1}$ and the failure of $(q,\alpha) = (\frac{d+1}{d-1},\frac{2q}{q+1})$. The estimates \eqref{e:FS} in the range $q \in [1,\frac{d+1}{d-1})$ were obtained in \cite{frank-sabin-1}. 

We remark that \eqref{e:FS} may be considered in terms of a square function estimate of the form
\begin{equation*}
\bigg\|
\bigg(\sum_j |e^{it\Delta}f_j|^2 \bigg)^{1/2} \bigg\|_{L^{2p}_tL^{2q}_x}
\lesssim
\bigg(\sum_j \|f_j\|_2^{2\alpha} \bigg)^{1/2\alpha}
\end{equation*}
for \emph{orthogonal} systems $(f_j)_j$ in $L^2(\mathbb{R}^d)$. For a number of reasons, the formulation in \eqref{e:FS} is more convenient; for example, later we make use of a semi-classical limiting argument to connect such estimates to Strichartz estimates for the velocity average $\rho F$ of the solution $F$ of the kinetic transport equation (see the forthcoming Proposition \ref{p:semiclassical}) and from this viewpoint, \eqref{e:FS} is more natural.

The idea of extending classical functional inequalities to orthonormal systems of input functions goes back to famous work of Lieb--Thirring \cite{Lieb-Thirring-1}, where a generalisation of a certain Gagliardo--Nirenberg--Sobolev estimate to orthonormal systems of $L^2$ functions was established. The Lieb--Thirring inequalities are a key component in the proof of stability of matter; see, for example, \cite{Lieb-Thirring-1} or the comprehensive survey by Lieb \cite{LiebBAMS} for further details. In \cite{Lieb_Sobolev}, Lieb also obtained the estimate
\begin{equation} \label{e:Lieb_Sobolev}
\bigg\|\sum_j \lambda_j ||D|^{-s}f_j |^2 \bigg\|_{L^{q}(\mathbb{R}^d)} \lesssim \|\lambda\|_{\ell^1}^{\frac{1}{q}} \|\lambda\|_{\ell^\infty}^{\frac{1}{q'}}
\end{equation}
for orthonormal systems $(f_j)_j$ in $L^2(\mathbb{R}^d)$, where $q \in (1,\infty)$ and $2s = d - \frac{d}{q}$. Here, and throughout this paper, we use the notation $|D| = \sqrt{-\Delta}$. For a single function input, \eqref{e:Lieb_Sobolev} reduces to a classical Sobolev embedding estimate. The driving motivation for extending fundamental estimates to orthonormal systems has come from quantum mechanics, since such systems give a description of independent fermions in euclidean space. We refer the reader to \cite{FLLS} and \cite{frank-sabin-1} for further details, along with work of Lewin--Sabin in \cite{LewinSabinWP} and \cite{LewinSabinScatt}, where the estimates in Theorem \ref{t:s=0} were applied to the theory of the Hartree equation for an infinite number of particles (see also the survey by Sabin \cite{sabin}, along with \cite{AJM}, \cite{CHP-1} and \cite{CHP-2} for related results).

Regarding the exponent $\alpha$ in Theorem \ref{t:s=0}, note that the triangle inequality and classical Strichartz estimate \eqref{e:classical} imply  
\begin{equation} \label{e:alpha=1}
\bigg\|
\sum_j
\lambda_j
|e^{it\Delta}f_j|^2
\bigg\|_{L^p_tL^q_x}
\leq
\sum_{j}
|\lambda_j|
\left\|
|e^{it\Delta}f_j|^2
\right\|_{L^p_tL^q_x}
\lesssim
\sum_j|\lambda_j|
\end{equation}
which gives \eqref{e:FS} with $\alpha=1$ \emph{without} making use of the orthogonality; the pertinent point here is to raise $\alpha$ as far as possible by capitalising on the orthogonality of the $f_j$. Of course, the above ``trivial" argument in \eqref{e:alpha=1} can be used for a larger range of $q$ than the range $q \in [1,\frac{d+1}{d-1})$ in Theorem \ref{t:s=0}, but interestingly, at the Keel--Tao endpoint $(p,q) = (1,\frac{d}{d-2})$ where $q$ is as large as possible (in this discussion, we are assuming $d \geq 3$), the exponent $\alpha = 1$ cannot be improved (see \cite{frank-sabin-2}). It follows that $q = \frac{d+1}{d-1}$ plays the role of an endpoint in the context of \eqref{e:FS}. Indeed,  interpolating \eqref{e:FS} for $q$ arbitrarily close to $\frac{d+1}{d-1}$ (and $\alpha = \frac{2q}{q+1}$) with $q = \frac{d}{d-2}$ (and $\alpha = 1$) gives \eqref{e:FS} for all $q \in (\frac{d+1}{d-1},\frac{d}{d-2})$ and any $\alpha < p$; on the other hand, it was shown in \cite{frank-sabin-2} that \eqref{e:FS} fails for $\alpha > p$. 
\begin{theorem}[\cite{frank-sabin-2}] \label{t:FS2}
Suppose $d\geq 3$ and $p,q\geq1$ satisfy 
$
\frac{2}{p}+\frac{d}{q}=d$,
and
$
\frac{d+1}{d-1} < q < \frac{d}{d-2}.
$
Then, for any $\alpha < p$,
\begin{equation}\label{e:FSeps}
\bigg\|
\sum_j
\lambda_j
|e^{it\Delta}f_j|^2
\bigg\|_{L^p_tL^q_x}
\lesssim \|\lambda\|_{\ell^{\alpha}}
\end{equation}
holds for all orthonormal systems $(f_j)_j$ in $L^2(\mathbb{R}^d)$ and all sequences 
$
\lambda=(\lambda_j)_j$ in $\ell^{\alpha}(\mathbb{C})
$.
This is sharp in the sense that the estimate fails for all $\alpha > p$.
\end{theorem}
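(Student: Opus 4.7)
The plan is to split the argument into the sufficiency direction ($\alpha<p$) and the sharpness direction (failure when $\alpha>p$). For sufficiency, I would interpolate between two known endpoint estimates. The first is Theorem~\ref{t:s=0} applied at some $(p_1,q_1)$ with $q_1<\frac{d+1}{d-1}$ and $\alpha_1=\frac{2q_1}{q_1+1}$; crucially, $\alpha_1<p_1$, with equality attained only in the unavailable limit $q_1\to\frac{d+1}{d-1}$. The second is the trivial bound \eqref{e:alpha=1} at the Keel--Tao endpoint $(p_2,q_2)=(1,\frac{d}{d-2})$, where $\alpha_2=1=p_2$. For any target $(p_0,q_0)$ in the stated range there is $\theta\in(0,1)$ such that the reciprocals of $p_0,q_0,\alpha_0$ are the $\theta$-convex combinations of the corresponding endpoint reciprocals, and the algebraic identity
\[
\frac{1}{\alpha_0}-\frac{1}{p_0}=(1-\theta)\Bigl(\frac{1}{\alpha_1}-\frac{1}{p_1}\Bigr)
\]
then forces $\alpha_0<p_0$, with the gap shrinking to zero as $q_1\uparrow\frac{d+1}{d-1}$. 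Hence every $\alpha_0<p_0$ can be reached.

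To actually carry out the interpolation, I would pass to the Frank--Sabin duality picture, in which \eqref{e:FSeps} at a triple $(p,q,\alpha)$ is equivalent to the Schatten-class bound
\[
\|W\, e^{it\Delta}\|_{\mathfrak{S}^{2\alpha'}}\lesssim \|W\|_{L^{2p'}_tL^{2q'}_x}
\]
for complex-valued spacetime weights $W$, where $We^{it\Delta}$ is regarded as an operator from $L^2_x$ into $L^2_{t,x}$. Stein's complex interpolation theorem, applied to an analytic family of the form $z\mapsto |W|^{z}\,\mathrm{sgn}(W)\, e^{it\Delta}$ on the strip $0\le\mathrm{Re}\,z\le 1$, combines the two endpoint Schatten inequalities into the bound at any intermediate triple. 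This is the step I expect to be the main obstacle: the Schatten exponent and both Lebesgue exponents on $W$ vary simultaneously, and some care is needed to set up $|W|^z$ analytically for complex-valued $W$; a plain Riesz--Thorin argument would not suffice.

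For sharpness I would rule out $\alpha>p$ by exhibiting an explicit orthonormal family that violates the putative inequality. The most conceptually clean route is through the semi-classical limit connecting $\sum_j|e^{it\Delta}f_j|^2$ to the velocity average $\rho F$ of a solution $F$ of the kinetic transport equation (cf.\ the forthcoming Proposition~\ref{p:semiclassical}): the corresponding Strichartz estimate for $\rho F$ is known to fail when $\alpha>p$, and the semi-classical correspondence transfers this obstruction back to \eqref{e:FSeps}. A direct counterpart is to use a coherent-state tiling of phase space, taking $N$ modulated translates of a fixed normalised bump as the $f_j$ with $\lambda_j\equiv 1$, and then comparing the right-hand side $N^{1/\alpha}$ against a lower bound on the left-hand side obtained from the constructive superposition of the dispersed packets; an optimal choice of the lattice parameters recovers the same threshold $\alpha\le p$.
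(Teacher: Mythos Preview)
Your sufficiency argument is essentially the paper's: interpolate the estimates of Theorem~\ref{t:s=0} (taken arbitrarily close to $q=\tfrac{d+1}{d-1}$) with the Keel--Tao endpoint $(1,\tfrac{d}{d-2})$ at $\alpha=1$. Your algebraic observation $\frac{1}{\alpha_0}-\frac{1}{p_0}=(1-\theta)\bigl(\frac{1}{\alpha_1}-\frac{1}{p_1}\bigr)$ is exactly the mechanism that produces every $\alpha_0<p_0$. One remark: you do not need Stein interpolation in the dual picture. For a \emph{fixed} orthonormal system $(f_j)_j$, the map $\lambda\mapsto\sum_j\lambda_j|e^{it\Delta}f_j|^2$ is linear, so ordinary complex (Riesz--Thorin) interpolation between $\ell^{\alpha_1}\to L^{p_1}_tL^{q_1}_x$ and $\ell^{1}\to L^{1}_tL^{d/(d-2)}_x$ already gives the intermediate bound with a constant independent of the system. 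Your duality/analytic-family route works too, but it is more machinery than the problem demands.

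The sharpness argument, however, has a genuine gap. On the segment $(A,D)$ one has $p<\alpha^*(p,q)=\tfrac{2q}{q+1}$, and the semi-classical limit in Proposition~\ref{p:semiclassical} balances the powers of $h$ \emph{only} when $\alpha=\alpha^*(p,q)$; for any other $\alpha$ the two sides scale differently and the transfer to kinetic transport gives no information. Thus the semi-classical route can at best rule out $\alpha>\alpha^*(p,q)$, leaving the entire range $p<\alpha\le\alpha^*(p,q)$ untouched. Your ``coherent-state tiling'' description --- $N$ modulated translates of a bump with $\lambda_j\equiv 1$ --- is precisely the paper's first necessity construction in Section~\ref{section:strong}, and it delivers $\alpha\le\alpha^*(p,q)$, not $\alpha\le p$; no choice of lattice parameters in that setup recovers the stronger threshold. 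The paper instead uses a genuinely different family: $f_j=e^{ij\Delta}g$ with $\widehat g$ supported on a carefully chosen annulus so that $(f_j)_j$ is orthonormal, and then exploits disjointness of the time intervals $[n,n+\varepsilon)$ to bound
\[
\bigg\|\sum_j\lambda_j|e^{it\Delta}f_j|^2\bigg\|_{L^p_tL^q_x}^p\ \ge\ \sum_n\lambda_n^p\int_n^{n+\varepsilon}\|e^{i(t-n)\Delta}g\|_{L^{2q}_x}^{2p}\,\mathrm{d}t\ \gtrsim\ \|\lambda\|_{\ell^p}^p,
\]
which forces $\alpha\le p$. This time-translate construction is the missing idea in your proposal.
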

Thus, for $d \geq 3$ and $q \in (\frac{d+1}{d-1},\frac{d}{d-2})$, the only remaining issue is the critical case $\alpha = p$; such estimates would follow by interpolation if the following interesting conjecture (raised in \cite{FLLS}; see also \cite{frank-sabin-2}) were true. 
\begin{conjecture}\label{conj:FLLS}
Let $d\geq1$. At the endpoint $(p,q) = (\frac{d+1}{d},\frac{d+1}{d-1})$, the restricted-type estimate
\begin{equation} \label{e:FLLSconj}
\bigg\|
\sum_j
\lambda_j
|e^{it\Delta}f_j|^2
\bigg\|_{L^p_tL^q_x}
\lesssim
\|\lambda\|_{\ell^{p,1}}
\end{equation}
holds for all orthonormal systems $(f_j)_j$ in $L^2(\mathbb{R}^d)$ and all sequences $\lambda=(\lambda_j)_j$ in $\ell^{p,1}(\mathbb{C})$.
\end{conjecture}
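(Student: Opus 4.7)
The plan is to dualise to a Schatten estimate and then mount a Keel--Tao-style endpoint argument in the non-commutative setting. Writing $\gamma = \sum_j \lambda_j |f_j\rangle\langle f_j|$, one has $\|\gamma\|_{\mathfrak{S}^{p,1}} \sim \|\lambda\|_{\ell^{p,1}}$, and $\sum_j \lambda_j |e^{it\Delta}f_j|^2$ is the spatial density of $e^{it\Delta}\gamma e^{-it\Delta}$. Pairing against $V \in L^{p'}_t L^{q'}_x$, factoring $V = W_1 \overline{W_2}$, and invoking a $TT^*$ identity reduces \eqref{e:FLLSconj} to a Schatten--Lorentz bound of the form
\begin{equation*}
\|W_1 e^{i(t-s)\Delta} \overline{W_2}\|_{\mathfrak{S}^{d+1,\infty}(L^2_{s,y} \to L^2_{t,x})} \lesssim \|W_1\|_{L^{2(d+1)}_t L^{d+1}_x} \|W_2\|_{L^{2(d+1)}_t L^{d+1}_x},
\end{equation*}
revealing a structural analogy with the classical endpoint Strichartz estimate of Keel--Tao, now with a Schatten ideal replacing the scalar operator norm on the dual side.

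To attack this, I would decompose the kernel dyadically in the temporal separation $|t-s|\sim 2^k$, writing the resulting operator as $\sum_{k\in\mathbb{Z}} T_k$. On each piece $T_k$, a Hilbert--Schmidt bound follows from the dispersive estimate $\|e^{i(t-s)\Delta}\|_{L^1_x \to L^\infty_x} \lesssim |t-s|^{-d/2}$, while an operator-norm bound follows from the unitarity of $e^{it\Delta}$ together with multiplication by $W_1, W_2$; complex interpolation within each dyadic block should yield a uniform-in-$k$ strong Schatten-$(d+1)$ estimate on $T_k$. A direct scale count then identifies $(p,q) = (\frac{d+1}{d},\frac{d+1}{d-1})$ as the critical pair, leaving no slack at the endpoint and forcing the target ideal to drop from $\mathfrak{S}^{d+1}$ to $\mathfrak{S}^{d+1,\infty}$.

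The main obstacle is the summation step. In the classical Keel--Tao endpoint, one uses bilinear real interpolation into a scalar $L^p$ target, for which the Bergh--L\"ofstr\"om framework applies cleanly; here the target is the non-commutative weak-type ideal $\mathfrak{S}^{d+1,\infty}$, which interacts poorly with dyadic sums of near-orthogonal operators because, unlike its commutative counterpart, there is no comfortable subadditivity over pieces with disjoint temporal support. I would try to transplant a Christ--Kiselev-type lemma into the Schatten setting and combine it with an atomic decomposition of $W_1, W_2$ along their Lorentz level sets, so as to exploit the disjointness of the temporal supports of widely-separated $T_k$. Whether this can be pushed through is unclear: the frequency-localised counterexamples highlighted in the abstract of the present paper show that analogous passages from per-scale to global bounds can fail dramatically in this setting, and so the conjecture may well require a genuinely new non-commutative interpolation device rather than a direct Schatten-valued transplant of the Keel--Tao endpoint argument.
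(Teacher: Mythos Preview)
Your approach cannot succeed as written, because the statement you are trying to prove is \emph{false} when $d=1$. The paper does not prove Conjecture~\ref{conj:FLLS}; rather, it disproves it in one dimension and leaves $d\geq 2$ open. The paper's route is indirect: a semi-classical limiting argument (Proposition~\ref{p:semiclassical}) shows that the conjectured estimate \eqref{e:FLLSconj} would imply the kinetic transport bound
\[
\bigg\|\int_{\mathbb{R}} f(x-tv,v)\,\mathrm{d}v\bigg\|_{L^2_tL^\infty_x}\lesssim\|f\|_{L^{2,1}_{x,v}}\qquad(d=1),
\]
and this is then shown to fail by testing on the characteristic function of a $\delta$-neighbourhood of a zero-measure Kakeya set and letting $\delta\to 0$ (Theorem~\ref{t:KTweakfail}). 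Since your Keel--Tao style scheme is dimension-uniform, it must break somewhere; the breakdown is precisely at the summation step you flagged. Your hedging there is well founded, but the situation is worse than ``unclear'': no amount of Christ--Kiselev or atomic decomposition will rescue the argument at $d=1$, because the target inequality is simply false.

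For $d\geq 2$ your outline is a reasonable description of the natural approach, and the paper neither confirms nor refutes the conjecture in that range. The paper does, however, sharpen the picture on the full critical line $[O,A]$: Proposition~\ref{p:OAfail} shows that for any $r>1$ the Lorentz-weakened estimate $\mathcal{O}^s((p,\infty),(q,\infty);(p,r))$ fails there, so if \eqref{e:FLLSconj} is true for $d\geq 2$ it is genuinely a restricted-type phenomenon at $r=1$. This rules out any argument that would deliver even a marginal improvement beyond $\ell^{p,1}$ on the right-hand side, and in particular any scheme that sums the dyadic pieces with room to spare.
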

Here, $\ell^{p,1}(\mathbb{C})$ is a Lorentz sequence space, and we clarify the meaning of this in the next section. We remark that the only argument we are aware of to obtain \eqref{e:FSeps} with $\frac{2}{p}+\frac{d}{q}=d$, $\frac{d+1}{d-1} < q < \frac{d}{d-2}$ and $\alpha = p$ from the estimate \eqref{e:FLLSconj} (were it to be true) proceeds via real interpolation with the estimate \eqref{e:FSeps} when $(p,q,\alpha) = (1,\frac{d-2}{d},1)$. Such an argument is not completely obvious since real interpolation, in general, does not work well with mixed-norm Lebesgue spaces (see, for example, \cite{Cwikel74}); in certain cases, real interpolation of mixed-norm spaces gives the expected outcome and we shall in fact use such cases in the proof of one of our main results in Theorem \ref{t:smooth} below.

\subsection{Orthonormal data in Sobolev spaces} 
Our goal in this paper is to provide a more complete picture of the generalised Strichartz estimates for orthonormal systems. Firstly, we establish sharp estimates of the form
\begin{equation} \label{e:warmupgoal}
\bigg\|
\sum_j
\lambda_j
|e^{it\Delta}|D|^{-s}f_j|^2
\bigg\|_{L^p_tL^q_x}
\lesssim
\|\lambda\|_{\ell^{\alpha}}
\end{equation}
for orthonormal systems in $L^2(\mathbb{R}^d)$, or equivalently, estimates of the form \eqref{e:FS} for orthonormal systems in the (homogeneous) Sobolev space $\dot{H}^s(\mathbb{R}^d)$; for such an estimate to be true, we need to assume the scaling condition
\begin{equation} \label{e:scaling}
\frac{2}{p} + \frac{d}{q} = d - 2s. 
\end{equation}
Of course, the single-function classical counterpart to such an estimate is
\begin{equation} \label{e:smoothStr}
\||e^{it\Delta}f|^2\|_{L^p_tL^q_x} \lesssim 1 \qquad \text{whenever $\|f\|_{\dot{H}^s(\mathbb{R}^d)} = 1$}
\end{equation}
for which it is well known that \eqref{e:scaling} is necessary (by a scaling argument) as well as the condition $s \in [0,\frac{d}{2})$. 

A precise understanding of the interaction between the smoothness parameter $s$ and the exponent $\alpha$ will be derived, thus providing a natural extension of Theorems \ref{t:s=0} and \ref{t:FS2} to all admissible $s$. This question naturally arises if we consider Lieb's generalisation of the classical Sobolev estimate in \eqref{e:Lieb_Sobolev}; indeed, this implies
\begin{equation*}
\bigg\|\sum_j \lambda_j |e^{it\Delta}|D|^{-s}f_j |^2 \bigg\|_{L^\infty_tL^{q}_x} \lesssim \|\lambda\|_{\ell^1}^{\frac{1}{q}} \|\lambda\|_{\ell^\infty}^{\frac{1}{q'}}
\end{equation*}
for all orthonormal systems $(f_j)_j$ in $L^2(\mathbb{R}^d)$ (since $(e^{it\Delta}f_j)_j$ is also an orthonormal system for each fixed $t \in \mathbb{R}$). Here, $s \in [0,\frac{d}{2})$ and $2s = d - \frac{d}{q}$. However, it is not clear to us how to induce estimates of the form \eqref{e:warmupgoal} from this for general $p$ and $q$ with the sharp $\alpha$. If Lieb's estimate \eqref{e:Lieb_Sobolev} were achievable with the smaller quantity $\|\lambda\|_{\ell^q}$ on the right-hand side, then we would be able to obtain our desired goal; however, such an estimate fails (we will observe a somewhat stronger negative result in Proposition \ref{p:OBr>1fail} below.)

Prior to stating our main results, we offer some words on why the estimates \eqref{e:warmupgoal} for $s > 0$ offer some (perhaps unexpected) difficulties. In the case of the single-function estimate \eqref{e:smoothStr}, one can proceed by first establishing the desired estimates for initial data which are frequency localised to annuli and upgrade to general data via Littlewood--Paley theory. It seems difficult to proceed in this way in the case of the generalised estimates \eqref{e:warmupgoal} and we shall highlight this by showing, somewhat roughly speaking, that frequency localised estimates are true in almost all cases at the critical value of $\alpha$, whereas on a certain critical line, we shall show that the desired estimates without the frequency localisation are not true. Despite this, we are able to obtain  estimates of the form \eqref{e:warmupgoal} with the sharp value of $\alpha$ (expect endpoints in certain cases) and our argument is based on upgrading the frequency localised estimates to general data and is carried out via a succession of interpolation arguments.

\begin{figure} \label{fig:OtoG}
\includegraphics[width=300pt]{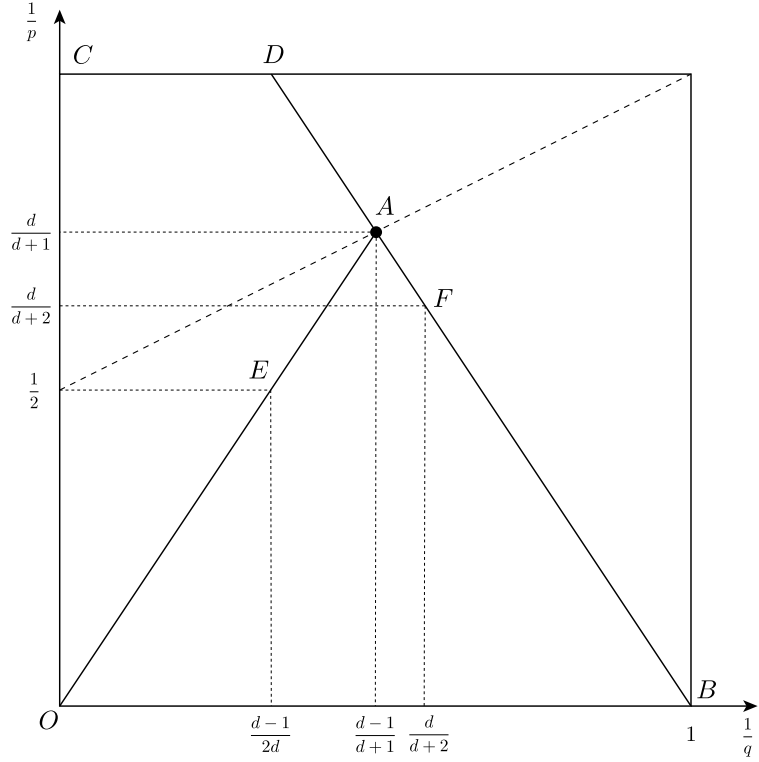}
\vspace{-5mm}
\caption{{ 
The points $A$ to $F$}}
\end{figure}

In order to state our results precisely, we establish some notation.
\begin{notation}
We introduce the following points (see Figure 1
): 
\begin{equation*}
\begin{array}{llllll}
& A = (\tfrac{d-1}{d+1},\tfrac{d}{d+1}) \quad & B = (1,0) \quad & C = (0,1) \\
& D = (\tfrac{d-2}{d},1) & E = (\tfrac{d-1}{2d},\tfrac{1}{2}) \quad & F  = (\tfrac{d}{d+2},\tfrac{d}{d+2})
\end{array}
\end{equation*}
and the origin $O = (0,0)$. For points $X_j \in \mathbb{R}^2$, $j=1,2,3,4$, we write
\begin{align*}
[X_1,X_2] & = \{ (1-t)X_1 + tX_2 : t \in [0,1]\} \\
[X_1,X_2) & = \{ (1-t)X_1 + tX_2 : t \in [0,1)\} \\
(X_1,X_2) & = \{ (1-t)X_1 + tX_2 : t \in (0,1)\}
\end{align*}
for line segments connecting $X_1$ and $X_2$, including or excluding $X_1$ and $X_2$ as appropriate. We write
$
X_1X_2X_3 
$
for the convex hull of $X_1, X_2, X_3$, and
$
{\rm int}\, X_1X_2X_3 
$
for the interior of $X_1X_2X_3$. Similarly, 
$
X_1X_2X_3X_4
$
denotes the convex hull of $X_1, X_2, X_3,X_4$, and
$
{\rm int}\, X_1X_2X_3X_4 
$
denotes the interior of $X_1X_2X_3X_4$. In particular,
\[
{\rm int}\, OAB = \bigg\{ \bigg(\frac{1}{q},\frac{1}{p}\bigg) \in (0,1)^2 : \text{$\frac{1}{q} > \frac{d}{(d-1)p}$ and $\frac{2}{p} + \frac{d}{q} < d$}\bigg\}
\]
and
\[
{\rm int}\, OCDA = \bigg\{ \bigg(\frac{1}{q},\frac{1}{p}\bigg) \in (0,1)^2 : \text{$\frac{1}{q} < \frac{d}{(d-1)p}$ and $\frac{2}{p} + \frac{d}{q} < d$}\bigg\}.
\]

We remark that the line segment $[B,A)$ corresponds to the range of estimates in Theorem \ref{t:s=0}, and (for $d \geq 3$) the segment $(A,D)$ corresponds to the range of estimates in Theorem \ref{t:FS2}. 

Throughout the paper, we need the exponent $\alpha^*(p,q)$ determined by 
\[
\frac{d}{\alpha^*(p,q)}=\frac{1}{p}+\frac{d}{q}.
\]
Note that if $\frac{2}{p} + \frac{d}{q} = d$ (corresponding to the case $s=0$) then $\alpha^*(p,q) = \frac{2q}{q+1}$, which is the sharp exponent in Theorem \ref{t:s=0}. Also, note that if $(\frac{1}{q},\frac{1}{p})$ belongs to the line segment $[O,A]$, then $\alpha^*(p,q) = p$, and if $(\frac{1}{q},\frac{1}{p})$ belongs to the line segment $[O,B]$, then $\alpha^*(p,q) = q$.
\end{notation}
\subsection{Frequency localised estimates}
The following theorem contains our frequency localised estimates which are shown to be true in almost all admissible cases with the sharp value of $\alpha$. In this statement, $P$ is the operator given by $\widehat{Pf}(\xi)=\phi(\xi)\widehat{f}(\xi)$, where $\phi$ is any nontrivial function belonging to $C^\infty_c(\mathbb{R}^d)$. 
\begin{theorem} \label{t:localmain} 
Suppose $d \geq 3$.
\begin{enumerate} \item[(1)] If $(\frac{1}{q},\frac{1}{p})$ belongs to $OAB \setminus A$, and $\alpha = \alpha^*(p,q)$, then 
\begin{equation} \label{e:localmain} 
\bigg\|\sum_j\lambda_j |e^{it\Delta}Pf_j|^2\bigg\|_{L^p_tL^q_x} \lesssim_\phi \|\lambda\|_{\ell^\alpha}
\end{equation}
holds for all orthonormal systems $(f_j)_j$ in $L^2(\mathbb{R}^d)$ and all sequences $\lambda=(\lambda_j)_j$ in $\ell^\alpha(\mathbb{C})$. This is sharp in the sense that the estimate fails for all $\alpha > \alpha^*(p,q)$. 
\item[(2)] If $(\frac{1}{q},\frac{1}{p})$ belongs to ${\rm int}\,OCDA$ and $\alpha = p$, then 
\begin{equation*}
\bigg\|\sum_j\lambda_j |e^{it\Delta}Pf_j|^2\bigg\|_{L^p_tL^q_x} \lesssim_\phi \|\lambda\|_{\ell^\alpha}
\end{equation*}
holds for all orthonormal systems $(f_j)_j$ in $L^2(\mathbb{R}^d)$ and all sequences $\lambda=(\lambda_j)_j$ in $\ell^\alpha(\mathbb{C})$. This is sharp in the sense that the estimate fails for all $\alpha > p$. 
\end{enumerate}
\end{theorem}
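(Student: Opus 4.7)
The plan is to use the Frank--Sabin duality principle to recast the target estimate as a Schatten-class bound. Setting $\gamma = \sum_j \lambda_j |f_j\rangle\langle f_j|$, so that $\|\gamma\|_{\mathcal{S}^\alpha} = \|\lambda\|_{\ell^\alpha}$, and defining $Tf(t,x) = e^{it\Delta}Pf(x)$, the estimate \eqref{e:localmain} is equivalent to the operator bound $\|W T\|_{\mathcal{S}^{2\alpha'}} \lesssim \|W\|_{L^{2p'}_tL^{2q'}_x}$ for all multiplication operators $W$, a form which interpolates naturally in both the Schatten and mixed-norm Lebesgue scales. I would verify three endpoint bounds directly: at $O = (0,0)$, i.e.\ $(p,q,\alpha) = (\infty,\infty,\infty)$, Bessel's inequality gives $\sum_j|e^{it\Delta}Pf_j(x)|^2 \leq \|K_t(x,\cdot)\|_2^2 = \|\phi\|_2^2$ uniformly in $(t,x)$, where $K_t$ is the kernel of $e^{it\Delta}P$; at $B = (1,0)$, i.e.\ $(p,q,\alpha) = (\infty,1,1)$, the identity $\int\rho_\gamma\,dx = \mathrm{tr}(P^*P\gamma)$ gives the bound at once; and on the open segment $[B,A)$ the sharp estimate at $\alpha = \tfrac{2q}{q+1}$ is given by Theorem \ref{t:s=0}.

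For Part (1), since $\tfrac{1}{p}$, $\tfrac{1}{q}$ and $\tfrac{1}{\alpha^*(p,q)} = \tfrac{1}{dp} + \tfrac{1}{q}$ are all affine in $(\tfrac{1}{q},\tfrac{1}{p})$, three-point complex interpolation among the estimates at $O$, $B$ and an auxiliary point $P_0 \in [B,A)$ yields the estimate throughout the open triangle $\mathrm{int}\,OBP_0$; letting $P_0 \to A$ sweeps out $\mathrm{int}\,OAB$. The boundary segments $[O,B]$ and $[B,A)$ are then recovered by two-point interpolation and by Theorem \ref{t:s=0} respectively. The remaining critical edge $[O,A)$, on which $\alpha^*(p,q) = p$, is not reached by two-point interpolation lines from $O$ to $[B,A)$ (a short calculation shows the only such line meeting the open segment $(O,A]$ is the degenerate one at $A$ itself), and so must be handled by a separate direct argument in the dual Schatten formulation $\|WTT^*\overline{W}\|_{\mathcal{S}^{\alpha'}} \lesssim \|W\|_{L^{2p'}_tL^{2q'}_x}^2$, exploiting the smoothness and stationary-phase decay of the frequency-localised kernel $H_\tau(z) = \int e^{iz\cdot\xi - i\tau|\xi|^2}|\phi(\xi)|^2\,d\xi$. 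For Part (2), the edge $[O,A)$ is supplied by Part (1), while the opposite edge $[C,D]$ (on which $p = 1$ and hence $\alpha = p = 1$) follows from the triangle inequality together with the single-function Strichartz estimate for frequency-localised data on the full Keel--Tao range. Since the affine relation $\tfrac{1}{\alpha} = \tfrac{1}{p}$ is preserved under complex interpolation, interpolating between these two edges covers $\mathrm{int}\,OCDA$ at the desired $\alpha = p$.

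Sharpness in both parts is established by a test-function argument: one uses a family of orthonormal states obtained as phase-space translates of a fixed frequency-localised Schwartz profile on a lattice of size $\sim N^d$, and matches the growth rates of the two sides as $N \to \infty$ to force $\alpha \leq \alpha^*(p,q)$ in Part (1) and $\alpha \leq p$ in Part (2). The main obstacle is securing the critical edge $[O,A)$ in Part (1) without invoking the conjectural endpoint $A$ (Conjecture \ref{conj:FLLS}); once that edge is in hand, the rest of Part (1) and all of Part (2) follow by standard interpolation in the Schatten and mixed-norm Lebesgue scales.
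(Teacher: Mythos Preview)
Your overall scaffolding is correct and matches the paper: the endpoint at $O$ via Bessel, the segment $[B,A)$ via Frank--Sabin, the segment $[C,D]$ via the triangle inequality, and complex interpolation to fill the regions. You also correctly isolate the segment $[O,A)$ as the crux. However, there is a genuine gap: you do not actually prove anything on $[O,A)$. Saying it ``must be handled by a separate direct argument \dots\ exploiting the smoothness and stationary-phase decay of the frequency-localised kernel'' is not a proof. This segment is where all the work is, and a one-line gesture toward stationary phase does not capture what is needed. The paper's argument here is substantial: one writes $T^0 = UPP^*U^*$ and decomposes it dyadically in time, $T^0 = \sum_{j\in\mathbb Z} T_j$, with a cutoff $\psi$ chosen to have mean zero. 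For each piece one proves a $\mathcal C^2$ estimate (splitting $T_j$ further into a near-diagonal main term and an off-diagonal error, the latter controlled by non-stationary phase, the former handled via a bilinear real interpolation lemma in the style of Keel--Tao) and a $\mathcal C^\infty$ estimate (a Fourier multiplier bound relying on $\widehat\psi(0)=0$), and then runs complex interpolation with an analytic family $\sum_j 2^{zj}T_j$. None of this machinery is hinted at in your proposal, and without it the proof does not close.

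Two smaller points. First, you cannot invoke Theorem \ref{t:s=0} directly on $[B,A)$ for the localised estimate, since $(Pf_j)_j$ is not orthonormal; the paper notes that one must rerun the Frank--Sabin duality argument with the cutoff inserted. Second, your sharpness sketch (``phase-space translates on a lattice of size $\sim N^d$'') gives only $\alpha \le \alpha^*(p,q)$. The constraint $\alpha \le p$ in Part (2) requires a different family: the paper uses time-translates $f_j = e^{ij\Delta}g$ of a fixed annulus-supported profile $g$, whose orthonormality in $\dot H^s$ is arranged by a radial computation, and then exploits disjointness in time of the peaks of $|e^{it\Delta}f_j|^2$.
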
 
The above theorem provides sharp estimates on all of the admissible region except the line segment $[A,D]$, where the estimate \eqref{e:localmain} is equivalent to the corresponding (strong-type) estimate without the localisation operator $P$ via a simple scaling argument; in this case, the strong-type estimate at the critical $\alpha = p$ remains open (see Theorems \ref{t:smooth} and \ref{t:smoothLorentz} below).


In order to establish the necessary condition $\alpha \leq \min\{\alpha^*(p,q),p\}$ in Theorem \ref{t:localmain}, we construct two explicit orthonormal systems of initial data $(f_j)_j$ and, using simple arguments, derive the claimed necessary condition. In each case,  the initial data have frequency support in some fixed annulus (not necessarily centered at the origin)  and thus will be used to derive the same necessary condition in Theorem \ref{t:smooth} below. For $s=0$, this recovers the necessary condition $\alpha\leq \min{(\frac{2q}{q+1},p)}$ contained in Theorems \ref{t:s=0} and \ref{t:FS2}; however, in both \cite{FLLS} and \cite{frank-sabin-2}, the proofs were operator-theoretic and somewhat more involved.

On the line $[B,A)$, the estimates in Theorem \ref{t:localmain} follow from the work of Frank--Sabin in \cite{frank-sabin-1} (since orthonormality is not preserved by the action of $P$, we cannot directly apply Theorem \ref{t:s=0}; however, the argument in \cite{frank-sabin-1} is still applicable). On $[C,D]$ the estimates trivially hold with $\alpha = 1$, which means that it will suffice to prove \eqref{e:localmain} on the critical line $[O,A)$.

The estimates in Theorem \ref{t:localmain} along the critical line $[O,A)$ are delicate and we establish these using bilinear real interpolation in the spirit of the proof of the endpoint case for the classical estimate \eqref{e:classical} in \cite{KeelTao}. Furthermore, on the critical line $[O,A]$, we shall also show below that the corresponding estimate to \eqref{e:warmupgoal} without the frequency localisation operator $P$ \emph{fails} to hold, which shows that certain difficulties arise when attempting to globalise the estimates in Theorem \ref{t:localmain}. We shall, in fact, show that in the scale of Lorentz spaces, only the very weakest estimate (restricted weak-type) is possible on $[O,A]$ (see the forthcoming Proposition \ref{p:OAfail}).
\subsection{Strong-type estimates}
Despite the difficulties raised above, using the frequency localised estimates from Theorem \ref{t:localmain}, we are able to prove the following strong-type estimates.
\begin{theorem}\label{t:smooth}
\begin{enumerate} \item[(1)] Let $d \geq 1$ and suppose $(\frac{1}{q},\frac{1}{p})$ belongs to ${\rm int}\,OAB$. If $2s = d - (\frac{2}{p} + \frac{d}{q})$ and $\alpha = \alpha^*(p,q)$, then 
\begin{equation*} \label{e:smoothONS}
\bigg\|\sum_j\lambda_j |e^{it\Delta}f_j|^2\bigg\|_{L^p_tL^q_x} \lesssim \|\lambda\|_{\ell^\alpha}
\end{equation*}
holds for all orthonormal systems $(f_j)_j$ in $\dot{H}^s(\mathbb{R}^d)$ and all sequences $\lambda=(\lambda_j)_j$ in $\ell^\alpha(\mathbb{C})$. This is sharp in the sense that the estimate fails for all $\alpha > \alpha^*(p,q)$. 
\item[(2)]  Let $d \geq 2$ and suppose $(\frac{1}{q},\frac{1}{p})$ belongs to ${\rm int}\,OCDA$. If $2s = d - (\frac{2}{p} + \frac{d}{q})$ and $\alpha < p$, then 
\begin{equation*}
\bigg\|\sum_j\lambda_j |e^{it\Delta}f_j|^2\bigg\|_{L^p_tL^q_x} \lesssim \|\lambda\|_{\ell^\alpha}
\end{equation*}
holds for all orthonormal systems $(f_j)_j$ in $\dot{H}^s(\mathbb{R}^d)$ and all sequences $\lambda=(\lambda_j)_j$ in $\ell^\alpha(\mathbb{C})$. This is sharp in the sense that the estimate fails for all $\alpha > p$. 
\end{enumerate}
\end{theorem}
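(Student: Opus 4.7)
The strategy is to upgrade the frequency localised estimates of Theorem \ref{t:localmain} to general data via a succession of interpolations. I would work in the equivalent dual form $\|W e^{it\Delta}|D|^{-s}\|_{\mathfrak{S}^{2\alpha'}} \lesssim \|W\|_{L^{2p'}_t L^{2q'}_x}$ (Schatten class from $L^2_x$ to $L^2_{t,x}$), obtained by a standard $TT^*$ duality; this formulation automatically extends the inequality from orthonormal systems to all density operators bounded in the appropriate Schatten norm and makes manifest the scale of interpolation.

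For part (1), the first step is to combine Theorem \ref{t:localmain} on the critical line with a dyadic decomposition of $|D|^{-s_1}$ and the almost orthogonality $P_k P_{k'} = 0$ for $|k - k'|$ large, in order to derive a non-localised restricted weak-type estimate at the sharp exponent $\alpha = p_1$ on $[O,A]$---the positive half of the forthcoming Proposition \ref{p:OAfail}---namely
\begin{equation*}
\bigg\|\sum_j \lambda_j |e^{it\Delta} f_j|^2\bigg\|_{L^{p_1, \infty}_{t,x}} \lesssim \|\lambda\|_{\ell^{p_1, 1}}
\end{equation*}
for $(f_j)_j$ orthonormal in $\dot{H}^{s_1}$ with $2s_1 = d - \frac{2}{p_1} - \frac{d}{q_1}$. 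The second step is to real-interpolate this weak-type bound with a strong-type anchor: Theorem \ref{t:s=0} (Frank--Sabin on $[B,A)$ at $s=0$ with $\alpha_0 = \frac{2q_0}{q_0+1}$) for part (1), and Theorem \ref{t:FS2} (Frank--Sabin on $(A, D)$ with any $\alpha_0 < p_0$) for part (2). At interpolation parameter $\theta \in (0, 1)$, the linearity of $1/\alpha^*$ in $(1/p, 1/q)$ recorded in the notation section forces $\alpha_\theta = \alpha^*(p_\theta, q_\theta)$ in part (1), and $\alpha_\theta < p_\theta$ in part (2); the Marcinkiewicz identity $(L^{p_0}, L^{p_1, \infty})_{\theta, p_\theta} = L^{p_\theta}$, together with its counterparts on the $L^q_x$-side and on the sequence space side, then yields the desired strong-type estimate. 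Letting $(p_0, q_0)$ and $(p_1, q_1)$ vary along their respective segments sweeps out all of $\text{int}\,OAB$ (resp.\ $\text{int}\,OCDA$).

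The main obstacle is that real interpolation of mixed-norm Lebesgue and Lorentz spaces $L^p_t L^q_x$ does not in general behave as expected, as the introduction itself emphasises; one has to verify that the particular pair of anchor estimates---linked by the common scaling $\frac{2}{p} + \frac{d}{q} = d - 2s$---lies in a favourable case for the mixed-norm real interpolation method. Sharpness of $\alpha$ in both parts is inherited for free: the two explicit orthonormal systems with Fourier support in a fixed annulus used to prove sharpness of Theorem \ref{t:localmain} transfer directly to the $\dot{H}^s$ setting (after normalising by $|D|^{-s}$) and rule out $\alpha > \alpha^*(p, q)$ in part (1) and $\alpha > p$ in part (2).
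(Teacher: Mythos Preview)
Your plan has the right skeleton---frequency-localised estimates, then a globalisation step, then interpolation---but the two middle steps as you describe them do not go through, and the paper's actual argument is considerably more delicate at precisely those points.

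First, a small correction: Proposition~\ref{p:OAfail} has no ``positive half''; it is a purely negative statement (failure on $[O,A]$ for every $r>1$). The restricted weak-type estimate $\mathcal{O}^s((p,\infty),q;(\alpha^*,1))$ that the paper actually establishes (Proposition~\ref{p:afterBtrick}) lives in the \emph{interior} of $OAB$, not on $[O,A]$. It is obtained by applying the Bourgain-type summation trick (Proposition~\ref{p:Bourgaintrick}) with two perturbed exponents $p_0,p_1$ on either side of the scaling line, both kept inside $\mathrm{int}\,OAB$; on $[O,A]$ itself one of the perturbed points would leave that region, so the argument does not reach the boundary.

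Second, and more seriously, your interpolation step---real interpolation between a strong-type estimate on $[B,A)$ and a weak-type estimate on $[O,A]$---is exactly the move the paper warns cannot be done naively. The identity you invoke, $(L^{p_0},L^{p_1,\infty})_{\theta,p_\theta}=L^{p_\theta}$, is a single-variable statement; in mixed norms the correct formula is $(L^{p_0}_tL^{q_0}_x,L^{p_1}_tL^{q_1}_x)_{\theta,p}=L^p_tL^{q,p}_x$ (see \eqref{e:realintermixed}), and this requires \emph{both} endpoints to be strong $L^{p_i}_tL^{q_i}_x$ spaces. Replacing one endpoint by $L^{p_1,\infty}_tL^{q_1}_x$ falls into the bad Cwikel regime. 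The paper circumvents this obstacle by an additional ingredient you are missing: a Lorentz refinement of Theorem~\ref{t:s=0} on the segment $(A,F)$ (Proposition~\ref{p:s=0Lorentz}, proved via O'Neil's sharp Hardy--Littlewood--Sobolev inequality), which gives $L^{p,\alpha}_t$ with $\alpha<p$ on the left. \emph{Complex} interpolation of this with the interior restricted weak-type estimate then yields restricted \emph{strong}-type estimates $\mathcal{O}^s(p,q;(\alpha^*,1))$ in $\mathrm{int}\,OFA$ (Proposition~\ref{p:afterLorentzFS}). Only now does the paper apply real interpolation, but between two strong-type mixed-norm endpoints, so \eqref{e:realintermixed} is legitimate; the output $L^p_tL^{q,p}_x$ and $\ell^{\alpha^*,p}$ is then upgraded to $L^p_tL^q_x$ and $\ell^{\alpha^*}$ using the crucial nesting $\alpha^*(p,q)\le p\le q$, which holds precisely in $OFA$. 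A final complex interpolation with $[B,A)$ fills out $\mathrm{int}\,OAB$. Part~(2) is then just complex interpolation of Part~(1) near $(O,A)$ with the trivial $\alpha=1$ estimates on $(C,D)$. Your treatment of the necessity claims is correct.
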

When $d \geq 3$, the admissible exponents for \eqref{e:smoothStr} correspond to the region $OCDB \setminus [O,C]$ and when $d=2$ the region is $OCB \setminus [O,C]$. For $d=1$, the admissible exponents correspond to the region $OAB \setminus [O,A)$; thus, in the case of one spatial dimension, the considerations in Theorem \ref{t:smooth}(2) do not arise. We also note that the sufficiency claim in Theorem \ref{t:smooth}(2) follows quickly from the sufficiency claim in Theorem \ref{t:smooth}(1) by complex interpolation, so its statement is included above as a matter of completeness.

Already in the above discussion, we have indicated why the strong-type estimates in Theorem \ref{t:smooth} do not seem to follow easily from known results. Another attempt to succeed in such a manner would be to naively mimick the deduction of the single-function estimates \eqref{e:smoothStr} from \eqref{e:classical} via the classical Sobolev embedding theorem; using, instead, the vector-valued generalisation of the Sobolev embedding theorem, followed by Theorems \ref{t:s=0} and \ref{t:FS2}, we may obtain
\[
\bigg\|\sum_j\lambda_j |e^{it\Delta}|D|^{-s}f_j|^2\bigg\|_{L^p_tL^q_x}
\lesssim
\bigg\|\sum_j\lambda_j|e^{it\Delta}f_j|^2\bigg\|_{L^p_tL^{\widetilde{q}}_x}
\lesssim
\|\lambda\|_{\ell^{\alpha}},
\]
where $\frac{d}{\widetilde{q}} = 2s + \frac{d}{q}$, $\alpha=\frac{2\widetilde{q}}{\widetilde{q}+1}$ for $\tilde{q} \in [1,\frac{d+1}{d-1})$ and $\alpha = p-\varepsilon$ for $\tilde{q} \in [\frac{d-1}{d+1},1]$ for any $\varepsilon > 0$ (more precisely, we can take $\varepsilon = 0$ when $\widetilde{q} = 1$). This value of $\alpha$ is sharp (modulo $\varepsilon$) only for $p \in [1,\frac{d+1}{d}]$, but very far from sharp as $p$ increases beyond $\frac{d+1}{d}$. For example, as the regularity parameter $s$ approaches $\frac{d}{2}$, the sharp value $\alpha = \alpha^*(p,q)$ we obtain in Theorem \ref{t:smooth} approaches infinity, whereas the above argument yields $\alpha =\frac{2\widetilde{q}}{\widetilde{q}+1}$ which is bounded above by 2. 

Finally, we remark that for $(\frac{1}{q},\frac{1}{p})$ on the line segment between $(0,\frac{1}{2})$ and $E$ (excluding the endpoints), the estimates in Theorem \ref{t:smooth} can be obtained from \cite{CHP-2}. In this case $\alpha = 2$ and this allows substantial simplification (via the duality principle in Proposition \ref{p:dualityprinciple} below, this case corresponds to an estimate in the Schatten space $\mathcal{C}^2$).

\subsection{Weak-type estimates}
In light of the above results, the remaining important problem is whether the estimates \eqref{e:warmupgoal} are valid when $(\frac{1}{q},\frac{1}{p})$ belongs to $OCDA$ with $\alpha = p$. Using the frequency localised estimates in Theorem \ref{t:localmain} and a certain interpolation argument (see Proposition \ref{p:Bourgaintrick} below) we can deduce that restricted weak type estimates
\begin{equation} \label{e:RWTmain}
\bigg\|\sum_j\lambda_j|e^{it\Delta}f_j|^2\bigg\|_{L^{p,\infty}_tL^q_x}
\lesssim
\|\lambda\|_{\ell^{\alpha,1}}
\end{equation}
are valid throughout ${\rm int}\, OCDB$, with the sharp value of $\alpha$ (in particular, $\alpha = p$ in the region ${\rm int}\,OCDA$) for orthonormal systems $(f_j)_j$ in $\dot{H}^s(\mathbb{R}^d)$. Since we are in the context of mixed-norm estimates, as we have already mentioned, the method of real interpolation does not work well in general (see, for example, \cite{Cwikel74}) and it seems difficult to upgrade \eqref{e:RWTmain} to strong-type estimates without losing optimality of the exponent $\alpha$.

With considerable more effort, we are able to establish the following result containing certain weak-type estimates at the critical exponent $\alpha = p$ on $(D,A)$.
\begin{theorem} \label{t:smoothLorentz}
Let $d \geq 2$. If $(\frac{1}{q},\frac{1}{p})$ belongs to $(D,A)$, then
\begin{equation*}
\bigg\|\sum_j\lambda_j |e^{it\Delta}f_j|^2\bigg\|_{L^{p,\infty}_tL^q_x} \lesssim \|\lambda\|_{\ell^{p,\infty}}
\end{equation*}
holds for all orthonormal systems $(f_j)_j$ in $L^2(\mathbb{R}^d)$ and all sequences $\lambda=(\lambda_j)_j$ in $\ell^{p,\infty}(\mathbb{C})$.
\end{theorem}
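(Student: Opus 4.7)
My plan is to derive Theorem~\ref{t:smoothLorentz} by real interpolation from the near-endpoint strong-type estimates on $(D, A)$ furnished by Theorem~\ref{t:FS2}. Fix the target point $(1/q, 1/p) \in (D, A)$ and pick two nearby points $(1/q_i, 1/p_i) \in (D, A)$, $i = 0,1$, with $p_0 < p < p_1$ and the target point lying on the segment between them. For each $i$, Theorem~\ref{t:FS2} supplies strong-type estimates
\[
\bigg\|\sum_j \lambda_j |e^{it\Delta}f_j|^2\bigg\|_{L^{p_i}_t L^{q_i}_x} \lesssim \|\lambda\|_{\ell^{\alpha_i}}
\]
for any $\alpha_i < p_i$; I would choose $\alpha_i$ close to $p_i$ so that the real interpolant at the parameter $\theta$ determined by $1/p = (1-\theta)/p_0 + \theta/p_1$ lands exactly at the critical exponent $\alpha = p$.

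Since $\lambda \mapsto T\lambda := \sum_j \lambda_j |e^{it\Delta}f_j|^2$ is linear in $\lambda$, one would naively apply a Marcinkiewicz-type interpolation to produce $T: \ell^{p, \infty} \to L^{p, \infty}_t L^q_x$. The complication, as stressed in the introduction, is that real interpolation between the output spaces $L^{p_i}_t L^{q_i}_x$ with both exponents varying does not in general yield the expected $L^{p, \infty}_t L^q_x$. To circumvent this I would pass through the duality principle (Proposition~\ref{p:dualityprinciple}), rewriting each strong-type estimate as a Schatten-norm bound for an operator of the form $W_1 e^{it\Delta}(e^{it\Delta})^* W_2$ acting on weight functions $W_i$. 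Real interpolation in the Schatten scale is well behaved, with $(\mathcal{C}^{r_0}, \mathcal{C}^{r_1})_{\theta, \infty} = \mathcal{C}^{r, \infty}$, so the Schatten-Lorentz class $\mathcal{C}^{2p', \infty}$ arises naturally on the dual side; re-dualizing should then produce the target weak-type bound.

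The main obstacle is to keep the interpolation of the weight-function side under control. The weights $W_i$ naturally lie in $L^{2 p_i'}_t L^{2 q_i'}_x$, which are mixed-norm spaces with both exponents varying between the two reference points, so the same obstruction reappears on the dual side. Overcoming this presumably requires exploiting the asymmetry in the target output (weak only in time, but strong in space): the spatial norms can be absorbed by H\"older-type inequalities rather than by interpolation in $q$, so that the remaining genuine interpolation takes place only in the outer (time) variable, where it is well behaved. Making this asymmetry work, together with the careful choice of reference points and interpolation parameter needed to extract $\ell^{p,\infty}$ on the input and the correct Lorentz structure $L^{p,\infty}_t L^q_x$ on the output, is the delicate point and presumably the source of the ``considerable more effort'' remarked on in the text.
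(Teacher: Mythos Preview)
Your plan has a genuine arithmetic obstruction that cannot be overcome by any amount of care with mixed-norm or Schatten interpolation. You propose to interpolate two instances of Theorem~\ref{t:FS2}, each at a sub-critical exponent $\alpha_i < p_i$, with $\theta$ fixed by $1/p = (1-\theta)/p_0 + \theta/p_1$. But then the interpolated sequence exponent satisfies
\[
\frac{1}{\alpha} = \frac{1-\theta}{\alpha_0} + \frac{\theta}{\alpha_1} > \frac{1-\theta}{p_0} + \frac{\theta}{p_1} = \frac{1}{p},
\]
so $\alpha < p$ \emph{strictly}, no matter how close each $\alpha_i$ is to $p_i$. Equivalently, on the dual side you would be interpolating $\mathcal{C}^{\alpha_i'}$ bounds with $\alpha_i' > p_i'$, which can never reach $\mathcal{C}^{p',1}$. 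Interpolating sub-critical estimates only reproduces sub-critical estimates; to touch $\alpha = p$ you need genuinely new information, not a repackaging of Theorem~\ref{t:FS2}.

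The paper's proof supplies that new information by a Keel--Tao style argument. One passes to the dual Schatten formulation and decomposes $UU^\ast = \sum_{j \in \mathbb{Z}} S_j$ dyadically in $|t-t'|$. For each piece one proves (Lemma~\ref{l:basic}) a family of bounds $\|W_1 S_j W_2\|_{\mathcal{C}^\sigma} \lesssim 2^{\beta j}\|W_1\|_{u,r}\|W_2\|_{v,s}$ at the vertices of a three-dimensional polytope, including $\mathcal{C}^\infty$ estimates coming directly from the dispersive and endpoint Strichartz bounds. These piecewise estimates are \emph{not} consequences of Theorem~\ref{t:FS2}; they carry strictly more information. Bilinear real interpolation (Proposition~\ref{p:bilinearint}) applied to the vector-valued map $(W_1,W_2)\mapsto\{W_1 S_j W_2\}_j$ then yields summability in $j$ with output in $\ell^1_0(\mathcal{C}^{\sigma_\ast,1})$, hence $\|W_1 UU^\ast W_2\|_{\mathcal{C}^{p',1}} \lesssim \|W_1\|_{(2p',2),2q'}\|W_2\|_{(2p',2),2q'}$, which dualizes to the claimed $\ell^{p,\infty}\to L^{p,\infty}_t L^q_x$ bound. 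The dyadic decomposition and the $\mathcal{C}^\infty$ estimates on the pieces are exactly what break the convexity barrier your plan runs into.
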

Here, $L^{p,\infty}$ and $\ell^{p,\infty}$ denote weak $L^p$ and weak $\ell^p$, respectively; in the next section, we clarify the meaning of this notation. Our argument for proving Theorem \ref{t:smoothLorentz} also proceeds via bilinear real interpolation in the spirit of \cite{KeelTao}.

It seems reasonable to believe that the estimates in Theorem \ref{t:FS2} could be true at the critical exponent $\alpha = p$ and away from the endpoint $(p,q) = (\frac{d+1}{d},\frac{d+1}{d-1})$; with this in mind, Conjecture \ref{conj:FLLS} becomes particularly tantalising. Although we are unable to resolve the conjecture in general dimensions $d \geq 2$, we give a short proof that it fails when $d=1$. Our argument exploits the connection between estimates of the form \eqref{e:FS} and the Strichartz estimates for the solution of the kinetic transport equation via a semi-classical limiting argument. The corresponding conjecture for the kinetic transport equation was raised in \cite{BBGL_CPDE} and we actually show the stronger result that this also fails when $d=1$ (see the forthcoming Theorem \ref{t:KTweakfail}). 

\begin{organisation}
Before entering the proofs of the above results, we begin in the next section with various preliminaries and establish some notation. The frequency localised estimates in Theorem \ref{t:localmain} will be proved in Section \ref{section:local} and the strong-type estimates in Theorem \ref{t:smooth} will be proved in Section \ref{section:strong}. In Section \ref{section:OA} we establish some negative results on the critical line $[O,A]$. This includes the fact that on $[O,A)$ the strong-type estimates fail at the critical value of $\alpha$, and we also show that Conjecture \ref{conj:FLLS} fails when $d=1$. The weak-type estimates in Theorem \ref{t:smoothLorentz} will be proved in Section \ref{section:AD}. Finally, in Section \ref{section:further}, we record some further results, including some considerations on the line segment $[O,B]$ relating to Lieb's generalised Sobolev estimate \eqref{e:Lieb_Sobolev}. We also include the observation that analogous results for the Schr\"odinger equation for the harmonic oscillator, corresponding to $e^{-itH}$, where $H = -\Delta + |x|^2$, are valid and may be obtained directly from the results for $e^{it\Delta}$ via a simple transformation.
\end{organisation}

\section{Preliminaries} \label{section:pre}

For appropriate functions $f : \mathbb{R}^d \to \mathbb{C}$, we denote the Fourier transform of $f$ by
\[
\widehat{f}(\xi) = \int_{\mathbb{R}^d} f(x) e^{-ix \cdot \xi} \, \mathrm{d}x
\]
and $\dot{H}^s(\mathbb{R}^d)$ is the homogeneous Sobolev space with norm 
\[
\|f\|_{\dot{H}^s(\mathbb{R}^d)} = \||D|^s f \|_{L^2(\mathbb{R}^d)} = \frac{1}{(2\pi)^{\frac{d}{2}}} \bigg(\int_{\mathbb{R}^d} |\widehat{f}(\xi)|^2 |\xi|^{2s} \, \mathrm{d}\xi \bigg)^{1/2}.
\]

\subsection{Lorentz spaces}
Here, we give a short introduction to the Lorentz spaces $L^{p,r}$ and the Lorentz sequence spaces $\ell^{p,r}$; for further details, we refer the reader to \cite{SteinWeiss}.

Considering $\mathbb{R}^n$ with Lebesgue measure $|\cdot|$, then we write $L^{p,r} = L^{p,r}(\mathbb{R}^n)$ for the Lorentz space of measurable functions $f$ on $\mathbb{R}^n$ with $\|f\|_{L^{p,r}} < \infty$, where
\[
\|f\|_{L^{p,r}} = \bigg(\int_0^\infty (t^{1/p} f^*(t))^r \, \frac{\mathrm{d}t}{t} \bigg)^{1/r}
\]
for $p,r \in [1,\infty)$, and $\|f\|_{L^{p,\infty}} = \sup_{t > 0} t^{1/p}f^*(t)$ for $p \in [1,\infty]$. Here, $f^*$ is the decreasing rearrangement of $f$ defined by
\[
f^*(t) = \inf\{\mu \geq 0 : a_f(\mu) \leq t\}
\]
where $a_f$ is the distribution function of $f$ given by
\[
a_f(\mu) = | \{ x \in \mathbb{R}^n : |f(x)| > \mu \}.
\]
Equivalently,
\[
\|f\|_{L^{p,r}} = p^{\frac{1}{r}} \bigg(\int_0^\infty \, (\mu a_f(\mu)^{\frac{1}{p}})^r \frac{\mathrm{d}\mu}{\mu} \bigg)^{1/r}
\]
for $p,r \in [1,\infty)$, and $\|f\|_{L^{p,\infty}} = \sup_{\mu > 0} \mu a_f(\mu)^{\frac{1}{p}}$.

For $p \in (1,\infty)$ and $r \in [1,\infty]$, the space $L^{p,r}$ is normable. The function $\|\cdot\|_{L^{p,r}}$ defined above gives rise to a norm when $r \leq p$ and a quasi-norm otherwise. To obtain a norm in all cases, equivalent to $\|\cdot\|_{L^{p,r}}$, we define
\[ 
\|f\|_{L^{p,r}}^*= \bigg(\int_0^\infty (t^{1/p} f^{**}(t))^r \, \frac{\mathrm{d}t}{t} \bigg)^{1/r}
\] 
for $p,r \in [1,\infty)$, and $\|f\|_{L^{p,\infty}} = \sup_{t > 0} t^{1/p}f^{**}(t)$ for $p \in [1,\infty]$, where
\[
f^{**} (t)=\frac1t\int^t_0 f^*(s)\, \mathrm{d}s.
\]   
One can then prove that, if $p \in (1,\infty)$ and $r \in [1,\infty]$, then $\|\cdot\|_{L^{p,r}}^*$ is a norm and satisfies
\begin{equation} \label{e:normequiv}
\|f\|_{L^{p,r}} \leq \|f\|_{L^{p,r}}^* \leq \frac{p}{p-1}\|f\|_{L^{p,r}}
\end{equation}
for all $f \in L^{p,r}$ (see, for example, \cite{SteinWeiss}).

Finally, we introduce the Lorentz sequence space $\ell^{\alpha,r}$ as the space of all sequences $\lambda = (\lambda_j)_j \in c_0$ such that $\|\lambda\|_{\ell^{\alpha,r}} < \infty$, where
\[
\|\lambda\|_{\ell^{\alpha,r}} = \bigg( \sum_{j=1}^\infty (j^{1/\alpha} \lambda^*_j)^r \frac{1}{j} \bigg)^{1/r}
\]
for $\alpha,r \in [1,\infty)$, and $\|\lambda\|_{\ell^{\alpha,\infty}} = \sup_{j \geq 1} j^{1/\alpha}\lambda^*_j$ for $\alpha \in [1,\infty]$. Here, $\lambda^* = (\lambda^*_j)_j$ is the sequence $(|\lambda_j|)_j$ permuted in a decreasing order.

\subsection{Schatten spaces and a duality principle}
We shall make use of the following duality principle several times, which recasts the estimates appearing in the various statements in the previous section in terms of Schatten space bounds on operators of the form $We^{it\Delta}(e^{it\Delta})^*\overline{W}$. This can be found in \cite{frank-sabin-1} in the case of Lebesgue spaces, and here we note that it extends to Lorentz spaces with trivial modifications to the proof.
\begin{proposition} \label{p:dualityprinciple}
Suppose $p,q \geq 1$ and $r, \widetilde{r}, \alpha, \beta \geq 1$. Also, let $Uf(x,t) = e^{it\Delta}f(x)$. Then 
\begin{equation*}
\bigg\|\sum_j\lambda_j |Uf_j|^2\bigg\|_{L^{p,r}_tL^{q,\widetilde{r}}_x} \lesssim \|\lambda\|_{\ell^{\alpha,\beta}}
\end{equation*}
holds for all orthonormal systems $(f_j)_j$ in $L^2(\mathbb{R}^d)$ and all sequences $\lambda=(\lambda_j)_j$ in $\ell^{\alpha,\beta}(\mathbb{C})$, if and only if
\[
\|WUU^*\overline{W}\|_{\mathcal{C}^{\alpha',\beta'}} \lesssim \|W\|_{L^{2p',2r'}_tL^{2q',2\widetilde{r}'}_x}^2
\]
holds for all $W \in L^{2p',2r'}_tL^{2q',2\widetilde{r}'}_x(\mathbb{R}^d \times \mathbb{R})$.
\end{proposition}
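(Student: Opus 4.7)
The plan is to follow the operator-theoretic framework of Frank--Sabin and simply replace every Lebesgue/Schatten norm by its Lorentz analogue, exploiting the fact that $L^{p,r}$ with $p \in (1,\infty)$, $r \in [1,\infty]$ is (quasi-)normable and is the K\"othe dual of $L^{p',r'}$ (and likewise for Schatten--Lorentz $\mathcal{C}^{\alpha,\beta}$ and $\mathcal{C}^{\alpha',\beta'}$). Given an orthonormal system $(f_j)_j$ and complex $\lambda = (\lambda_j)_j$, form the compact normal operator $\gamma = \sum_j \lambda_j |f_j\rangle\langle f_j|$. Since the $f_j$ are orthonormal, the singular values of $\gamma$ are precisely $(|\lambda_j|)_j$, so $\|\gamma\|_{\mathcal{C}^{\alpha,\beta}} = \|\lambda\|_{\ell^{\alpha,\beta}}$, and conversely every such $\gamma$ arises from some orthonormal system. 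Moreover, the density $\rho_{U\gamma U^*}(t,x) := (U\gamma U^*)(t,x;t,x)$ equals $\sum_j \lambda_j |Uf_j(t,x)|^2$. Thus the left-hand estimate is equivalent to
\[
\|\rho_{U\gamma U^*}\|_{L^{p,r}_t L^{q,\widetilde r}_x} \lesssim \|\gamma\|_{\mathcal{C}^{\alpha,\beta}}
\]
for all such $\gamma$.

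The direction $(\Rightarrow)$ proceeds by pairing the density against a test function. For $V \in L^{p',r'}_tL^{q',\widetilde r'}_x$, the Lorentz duality (together with the norm equivalence \eqref{e:normequiv}) and the cyclicity of the trace give
\[
\bigl|\mathrm{tr}(U^* M_V U \gamma)\bigr| = \bigl|\textstyle\int V\, \rho_{U\gamma U^*}\bigr| \lesssim \|V\|_{L^{p',r'}_tL^{q',\widetilde r'}_x}\|\gamma\|_{\mathcal{C}^{\alpha,\beta}}.
\]
Applying the Schatten--Lorentz duality $(\mathcal{C}^{\alpha,\beta})^* = \mathcal{C}^{\alpha',\beta'}$, supping over $\gamma$ yields $\|U^*M_V U\|_{\mathcal{C}^{\alpha',\beta'}} \lesssim \|V\|_{L^{p',r'}_tL^{q',\widetilde r'}_x}$. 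Specialising to $V = |W|^2$ and using $\|\,|W|^2\,\|_{L^{p',r'}_tL^{q',\widetilde r'}_x} = \|W\|_{L^{2p',2r'}_tL^{2q',2\widetilde r'}_x}^2$ together with the identity $U^*M_{|W|^2}U = (WU)^*(WU)$, whose Schatten--Lorentz norm coincides with that of $(WU)(WU)^* = WUU^*\overline W$ (since the nonzero singular values agree), delivers the claimed bound on $\|WUU^*\overline W\|_{\mathcal{C}^{\alpha',\beta'}}$.

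For $(\Leftarrow)$, given $\gamma$, recover the density norm by duality: find $V$ with $\|V\|_{L^{p',r'}_tL^{q',\widetilde r'}_x}\le 1$ nearly attaining $\|\rho_{U\gamma U^*}\|_{L^{p,r}_tL^{q,\widetilde r}_x}$. Since $\rho_{U\gamma U^*}$ is real-valued (and we may split it into positive and negative parts), one reduces to nonnegative $V$, and then writes $V = |W|^2$ with $W = V^{1/2}$. Running the above chain backwards, using H\"older in Schatten--Lorentz $|\mathrm{tr}(AB)|\le \|A\|_{\mathcal{C}^{\alpha',\beta'}}\|B\|_{\mathcal{C}^{\alpha,\beta}}$, gives
\[
\textstyle\int V\,\rho_{U\gamma U^*} = \mathrm{tr}(U^*\overline W W U\,\gamma) \le \|WUU^*\overline W\|_{\mathcal{C}^{\alpha',\beta'}}\|\gamma\|_{\mathcal{C}^{\alpha,\beta}} \lesssim \|V\|_{L^{p',r'}_tL^{q',\widetilde r'}_x}\|\gamma\|_{\mathcal{C}^{\alpha,\beta}},
\]
which is what we want. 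The only points demanding care, and the main (minor) obstacle, are invoking the Lorentz and Schatten--Lorentz dualities in the mixed-norm setting and justifying the sup-duality characterisation of $\|\cdot\|_{L^{p,r}_tL^{q,\widetilde r}_x}$; these are standard once one uses the equivalent norm $\|\cdot\|^*_{L^{p,r}}$ in \eqref{e:normequiv} and the corresponding symmetric norm on $\mathcal{C}^{\alpha,\beta}$, so the proof of \cite{frank-sabin-1} carries over mutatis mutandis.
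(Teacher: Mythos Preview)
Your proposal is correct and matches the paper's approach: the paper itself does not give a proof, noting only that the Lebesgue-space duality principle from \cite{frank-sabin-1} ``extends to Lorentz spaces with trivial modifications to the proof,'' and your sketch carries out exactly those modifications (Lorentz duality in place of $L^p$ duality, Schatten--Lorentz duality in place of Schatten duality, and the same trace/cyclicity manipulations).
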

Here, we are interpreting the function $W$ as an operator which acts by multiplication. Also, we briefly recall that the Schatten space $\mathcal{C}^\alpha=\mathcal{C}^\alpha(L^2(\mathbb{R}^d))$, for $1 \leq \alpha < \infty$, is defined to be the set of all compact operators $\gamma$ on $L^2(\mathbb{R}^d)$ such that the sequence of eigenvalues $(\lambda_j^2)_j$ of $\gamma^*\gamma$ belongs to $\ell^{\alpha/2}(\mathbb{C})$, in which case we define
\[
\|\gamma\|_{\mathcal{C}^\alpha} = \|\lambda\|_{\ell^\alpha} = \bigg(\sum_j |\lambda_j|^\alpha\bigg)^\frac{1}{\alpha}.
\]
When $\alpha = 2$, this coincides with the Hilbert--Schmidt norm and, when the operator is given by an integral kernel, this coincides with the $L^2(\mathbb{R}^d \times \mathbb{R}^d)$ norm of the kernel. Also, when $\alpha = \infty$, we define $\|\gamma\|_{\mathcal{C}^\infty}$ to be the operator norm of $\gamma$ on $L^2(\mathbb{R}^d)$.

The density of the operator $\gamma$ denoted by $\rho_\gamma(x)$ is formally defined as $\rho_\gamma(x)=\gamma(x,x)$ for $x\in\mathbb{R}^d$, where (by a standard abuse of notation) $\gamma(x,y)$ stands for the integral kernel of $\gamma$. For details about the Schatten classes the reader may consult the book by Simon \cite{simon}.

Relevant to our context are operators of the form $\gamma_0 =\sum_j\lambda_j|f_j\rangle\langle f_j|$ associated with a given orthonormal system $(f_j)_j$, where $|f\rangle\langle g|$ is Dirac's notation for the rank-one operator $\phi\mapsto \langle g,\phi\rangle f$. For such $\gamma_0$, we let $\gamma(t)=e^{it\Delta}\gamma_0 e^{-it\Delta}$ for $t\in\mathbb{R}$, and then one may check that
\[
\rho_{\gamma(t)}(x) = \sum_j \lambda_j|e^{it\Delta}f_j(x)|^2. 
\]
This relation connects  Strichartz estimates for orthonormal systems of initial data with the density function $\rho_{\gamma(t)}(x)$. For example, \eqref{e:FS} may be considered in the form
\begin{equation} \label{e:densityform}
\|\rho_{\gamma(t)}(x)\|_{L^p_t L^q_x} \lesssim \|\gamma\|_{\mathcal{C}^{\alpha}}. 
\end{equation}

\subsection{Littlewood--Paley projections and an interpolation method}
We fix a bump function $\varphi\in C^\infty_c(\mathbb{R})$ supported in $[\frac{1}{2},2]$ such that $\sum_{j\in\mathbb{Z}}\varphi(2^{-j}t) = 1$ for all nonzero $t$, and write $(P_j)_{j \in \mathbb{Z}}$ for the family of the Littlewood--Paley projection operators given by
\[
\widehat{P_jf}(\xi)=\varphi(2^{-j}\xi)\widehat{f}(\xi).
\]
The next proposition is the tool we use to extend globally from frequency localised estimates; the price we pay is that such frequency global estimates are in restricted weak-type form. Similar formulations of the same basic idea have appeared several times in the literature (see, for example, \cite{Bourgain} and \cite{Lee}); we need a vector-valued version which we were unable to find elsewhere, so a proof is also provided below.
\begin{proposition}\label{p:Bourgaintrick}
Let $p_0,p_1 > 1$, $q,\alpha_0,\alpha_1 \geq 1$ and $(g_j)_j$ be a uniformly bounded sequence in $L^{2p_i}_tL^{2q}_x$ for each $i=0,1$. If, for each $i=0,1$, there exist $\varepsilon_i>0$ such that
\begin{equation}\label{e:Btrickhyp}
\bigg\| \sum_j
\lambda_j
|P_kg_j|^2
\bigg\|_{L^{p_i,\infty}_tL^{q}_x}
\lesssim
2^{(-1)^{i+1}\varepsilon_ik}
\|\lambda\|_{\ell^{\alpha_i}}
\end{equation}
for all $k \in \mathbb{Z}$, then
\begin{equation*}
\bigg\|
\sum_j
\lambda_j
|g_j|^2
\bigg\|_{L^{p,\infty}_tL^{q}_x}
\lesssim
\|\lambda\|_{\ell^{\alpha,1}}
\end{equation*}
for all sequences $
\lambda=(\lambda_j)_j$ in $\ell^{\alpha,1}(\mathbb{C})
$,
where $\frac{1}{p} = \frac{\theta}{p_0}+\frac{1-\theta}{p_1}$, $\frac{1}{\alpha} = \frac{\theta}{\alpha_0}+\frac{1-\theta}{\alpha_1}$, and 
$
\theta=\frac{\varepsilon_1}{\varepsilon_0+\varepsilon_1}.
$
\end{proposition}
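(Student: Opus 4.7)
The plan is to reduce to a restricted weak-type estimate and then deduce it by combining a weighted Cauchy--Schwarz in frequency with the standard Bourgain-style summation over dyadic frequency shells. Since $\lambda\mapsto T(\lambda):=\sum_j\lambda_j|g_j|^2$ is linear, splitting $\lambda_j$ into its real/imaginary and positive/negative parts and then dyadically decomposing $\lambda = \sum_n \lambda\chi_{E_n}$ with $E_n:=\{j: 2^n < |\lambda_j|\leq 2^{n+1}\}$, together with the quasi-triangle inequality in $L^{p,\infty}_tL^q_x$ (available with bounded constants because $p>1$), reduces the claim to the restricted-type bound
\[
\bigg\|\sum_{j\in E}|g_j|^2\bigg\|_{L^{p,\infty}_tL^q_x}\lesssim |E|^{1/\alpha}
\]
for every finite $E\subset\mathbb{N}$.

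To link $|g_j|^2$ to the controlled pieces $|P_kg_j|^2$, I would use the weighted Cauchy--Schwarz inequality
\[
|g_j|^2=\bigg|\sum_k P_kg_j\bigg|^2 \leq \bigg(\sum_k 2^{-\delta|k-k_0|}\bigg)\bigg(\sum_k 2^{\delta|k-k_0|}|P_kg_j|^2\bigg) \lesssim_\delta \sum_k 2^{\delta|k-k_0|}T_k,
\]
valid for every $\delta>0$ and $k_0\in\mathbb{Z}$, where $T_k:=\sum_{j\in E}|P_kg_j|^2$. Let $F(t):=\|\sum_{j\in E}|g_j(t,\cdot)|^2\|_{L^q_x}$. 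For fixed $\mu>0$, the triangle inequality in $L^q_x$ combined with a pigeonhole in $k$ dominates $|\{F>\mu\}|$ by $\sum_k|\{t:\|T_k(t,\cdot)\|_{L^q_x} > c_\delta\mu 2^{-2\delta|k-k_0|}\}|$. Applying the hypothesis \eqref{e:Btrickhyp} with $i=0$ on $\{k\leq k_0\}$ and $i=1$ on $\{k>k_0\}$, and choosing $\delta<\tfrac12\min(\varepsilon_0,\varepsilon_1)$ so that the resulting geometric series in $k$ converge, I obtain
\[
|\{F>\mu\}|\lesssim \mu^{-p_0}|E|^{p_0/\alpha_0}2^{\varepsilon_0p_0k_0} + \mu^{-p_1}|E|^{p_1/\alpha_1}2^{-\varepsilon_1p_1k_0}.
\]

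Finally, choosing the integer $k_0$ to balance the two terms (rounding the optimal real value to the nearest integer costs only a bounded factor), a short algebraic computation shows the right-hand side is $\lesssim \mu^{-p}|E|^{p/\alpha}$ with $1/p = \theta/p_0+(1-\theta)/p_1$ and $1/\alpha=\theta/\alpha_0+(1-\theta)/\alpha_1$, where $\theta=\varepsilon_1/(\varepsilon_0+\varepsilon_1)$, which is the required restricted-type bound. The principal obstacle is that $|g_j|^2\neq\sum_k|P_kg_j|^2$ in general, so the weighted Cauchy--Schwarz inflates each frequency bound by a factor $2^{\delta|k-k_0|}$; the strict positivity of $\varepsilon_0$ and $\varepsilon_1$ is precisely what enables us to absorb this $\delta$-loss into the geometric sums, but it is also the reason the conclusion must take the weak-type form with $\ell^{\alpha,1}$ loss rather than a strong-type bound.
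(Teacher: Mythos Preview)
Your approach is essentially the paper's: reduce to characteristic sequences $\lambda=\chi_E$, control $\sum_{j\in E}|g_j|^2$ through the frequency pieces $T_k=\sum_{j\in E}|P_kg_j|^2$, apply the two hypotheses on opposite frequency ranges, and optimise the cutoff. The paper implements this via a sharp low/high split $g_j=\sum_{k\le M}P_kg_j+\sum_{k>M}P_kg_j$ together with Minkowski in $\ell^2_j$ (i.e.\ the triangle inequality for $\|(\sum_j|\cdot|^2)^{1/2}\|_{L^{2p_i,\infty}_tL^{2q}_x}$), rather than your weighted Cauchy--Schwarz plus pigeonhole; both routes lead to the same two-term expression to be balanced in $M$ (your $k_0$).

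One slip to correct: with the assignment as you wrote it ($i=0$ on $\{k\le k_0\}$ and $i=1$ on $\{k>k_0\}$) the geometric series in $k$ diverge for \emph{every} $\delta>0$, because the $i=0$ bound carries the factor $2^{-\varepsilon_0 k}$, which blows up as $k\to-\infty$, while the $i=1$ bound carries $2^{\varepsilon_1 k}$, which blows up as $k\to+\infty$. Swap the two (use $i=1$ for $k\le k_0$ and $i=0$ for $k>k_0$, as the paper does); then both series are genuinely geometric once $\delta<\tfrac12\min(\varepsilon_0,\varepsilon_1)$, and summing gives
\[
|\{F>\mu\}|\lesssim \mu^{-p_1}|E|^{p_1/\alpha_1}2^{\varepsilon_1 p_1 k_0}+\mu^{-p_0}|E|^{p_0/\alpha_0}2^{-\varepsilon_0 p_0 k_0},
\]
which is your displayed bound with $k_0\mapsto -k_0$ (harmless, since $k_0$ is a free parameter). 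The optimisation and the identification of $p,\alpha,\theta$ then proceed exactly as you describe.
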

\begin{proof}
It suffices to to consider the characteristic sequence $\lambda_j=\chi_E(j)$, where $E$ is an arbitrary subset of $\mathbb{N}$ such that the cardinality $\# E$ of $E$ is finite; that is, the claimed estimate follows once we show that
\begin{equation} \label{e:Btricksuffices}
|\mathcal{I}_\mu| \lesssim \bigg(\frac{(\# E)^{\frac{1}{\alpha}}}{\mu} \bigg)^p
\end{equation}
for any $\mu > 0$, where
\[
\mathcal{I}_\mu = \bigg\{t\in\mathbb{R}:\bigg\|\sum_{j \in E} |g_j(t,x)|^2\bigg\|_{L^{q}_x}>\mu\bigg\}.
\]
Take any $M\in\mathbb{Z}$, chosen momentarily to optimise the argument. Observe that
\begin{align*}
\bigg|\bigg\{t:\bigg\|\sum_{j \in E} \bigg|\sum_{k \leq M} P_kg_j(x,t)\bigg|^2\bigg\|_{L^{q}_x}>\mu\bigg\}\bigg| & \leq \frac{1}{\mu^{p_1}}\bigg\|\sum_{j \in E}\bigg|\sum_{k\leq M} P_kg_j(x,t)\bigg|^2 \bigg\|_{L^{p_1,\infty}_tL^q_x}^{p_1} \\
& \lesssim \frac{1}{\mu^{p_1}}\bigg(\sum_{k \leq M} \bigg\| \sum_{j \in E} |P_kg_j(x,t)|^2\bigg\|_{L^{p_1,\infty}_tL^q_x}^{\frac{1}{2}} \bigg)^{2p_1}  
\end{align*}
so by \eqref{e:Btrickhyp} with $i=1$ we get
\[
\bigg|\bigg\{t:\bigg\|\sum_{j \in E} \bigg|\sum_{k \leq M} P_kg_j(x,t)\bigg|^2\bigg\|_{L^{q}_x}>\mu\bigg\}\bigg| \lesssim \frac{2^{\varepsilon_1p_1M}}{\mu^{p_1}}
(\# E)^{\frac{p_1}{\alpha_1}}.
\]
We may handle the contribution for $k \geq M+1$ in a similar way using \eqref{e:Btrickhyp} with $i=0$, and since
\[
\bigg\|\sum_{j\in E} |g_j(x,t)|^2 \bigg\|_{L^q_x} \lesssim \bigg\|\sum_{j \in E} \bigg|\sum_{k \leq M} P_kg_j(x,t)\bigg|^2\bigg\|_{L^{q}_x} + \bigg\|\sum_{j \in E} \bigg|\sum_{k \geq M+1} P_kg_j(x,t)\bigg|^2\bigg\|_{L^{q}_x}
\]
for each $t \in \mathbb{R}$, we obtain
\begin{align*}
|\mathcal{I}_\mu| \lesssim \frac{2^{\varepsilon_1p_1M}}{\mu^{p_1}}(\# E)^{\frac{p_1}{\alpha_1}} +\frac{2^{-\varepsilon_0p_0M}}{\mu^{p_0}}(\# E)^{\frac{p_0}{\alpha_0}}.
\end{align*}
Optimising the above estimate in $M$, we see that an elementary computation yields \eqref{e:Btricksuffices}.
\end{proof}

\begin{notation}
We frequently discuss estimates of the form
\begin{equation} \tag{$\mathcal{O}^s((p,r),(q,\tilde{r});(\alpha,\beta))$} 
\bigg\|\sum_j\lambda_j |e^{it\Delta}f_j|^2\bigg\|_{L^{p,r}_tL^{q,\widetilde{r}}_x} \lesssim \|\lambda\|_{\ell^{\alpha,\beta}}
\end{equation}
for all orthonormal systems $(f_j)_j$ in $\dot{H}^s(\mathbb{R}^d)$, and all sequences 
$
\lambda=(\lambda_j)_j$ in $\ell^{\alpha,\beta}(\mathbb{C}),
$
so it will be convenient to introduce the notation $\mathcal{O}^s((p,r),(q,\tilde{r});(\alpha,\beta))$ for this estimate. The parameter $s$ will satisfy the scaling condition $2s=d-(\frac{2}{p}+\frac{d}{q})$.

In this notation, the conjectured estimate \eqref{e:FLLSconj} is $\mathcal{O}^0((p,p),(q,q);(\alpha,1))$ where $d = \frac{2}{p}+\frac{d}{q}$. In such a case, we simplify the notation to $\mathcal{O}^0(p,q;(\alpha,1))$, and we do so in a similar way whenever one or more of the Lorentz spaces reduces to a classical Lebesgue space. For example, $\mathcal{O}^s(p,q;\alpha)$ is the statement that
\begin{equation*}
\bigg\|\sum_j\lambda_j|e^{it\Delta}f_j|^2\bigg\|_{L^{p}_tL^{q}_x}\lesssim\|\lambda\|_{\ell^{\alpha}}
\end{equation*}
holds for all orthonormal systems $(f_j)_j$ in $\dot{H}^s(\mathbb{R}^d)$ and all sequences 
$
\lambda=(\lambda_j)_j$ in $\ell^\alpha(\mathbb{C})
$.
\end{notation}

\section{Frequency localised estimates: Proof of Theorem \ref{t:localmain}} \label{section:local}

In this section, we prove the sufficiency claims in Theorem \ref{t:localmain}. We postpone the justification of the necessary conditions until the proof of the analogous claims in Theorem \ref{t:smooth} (at the end of Section \ref{section:strong}); the orthonormal systems $(f_j)_j$ we use to generate the necessary conditions have frequency support in some fixed annulus, and therefore we use such orthonormal systems for both theorems.

To begin the proof of the sufficiency claims in Theorem \ref{t:localmain}, we begin with the elementary claim that 
\begin{equation} \label{e:localO}
\bigg\|\sum_j\lambda_j |e^{it\Delta}Pf_j|^2\bigg\|_{L^\infty_tL^\infty_x} \lesssim \|\lambda\|_{\ell^\infty}
\end{equation}
holds for all orthonormal systems $(f_j)_j$ in $L^2(\mathbb{R}^d)$. Recall, that $P$ is given by $\widehat{Pf}(\xi) = \phi(\xi)\widehat{f}(\xi)$, where $\phi \in C_c(\mathbb{R}^d)$. The estimate \eqref{e:localO} follows from \cite[Theorem 4]{frank-sabin-1}; here we present a short proof which avoids duality. Firstly, it suffices to consider the case where $\lambda_j=1$ for all $j$. Now fix any $(x,t)\in \mathbb{R}^d\times\mathbb{R}$ and define $\psi_{x,t}(\xi)=e^{-i(x\cdot\xi-t|\xi|^2)}\phi(\xi)$. Then we see from the orthonormality of $(f_j)_j$ in $L^2$ and Bessel's inequality that  
\begin{align*}
\sum_j|e^{it\Delta}Pf_j(x)|^2
&=
\sum_j
|\langle \widehat{\psi_{x,t}},f_j\rangle_{L^2}|^2
\leq
\|\widehat{\psi_{x,t}}\|_{L^2}^2 =\|\phi\|_{L^2}^2
\end{align*} 
from which \eqref{e:localO} clearly follows.

Next, we observe that if $(\frac{1}{q},\frac{1}{p}) \in [B,A)$ and $\alpha = \alpha^*(p,q)$, then 
\begin{equation} \label{e:localBA}
\bigg\|\sum_j\lambda_j |e^{it\Delta}Pf_j|^2\bigg\|_{L^p_tL^q_x} \lesssim \|\lambda\|_{\ell^\alpha}
\end{equation}
holds. Whilst this does not automatically follow from Theorem \ref{t:s=0} (since the orthogonality is not preserved under the action of $P$), the argument given by Frank--Sabin in \cite[Theorem 8]{frank-sabin-1} works just as well with the localisation operator $P$, and hence we simply refer the reader to \cite{frank-sabin-1} for further details.

If $(\frac{1}{q},\frac{1}{p}) \in [C,D]$, then the classical estimate \eqref{e:classical} and the triangle inequality quickly implies
\begin{equation} \label{e:localCD}
\bigg\|\sum_j\lambda_j |e^{it\Delta}Pf_j|^2\bigg\|_{L^p_tL^q_x} \lesssim \|\lambda\|_{\ell^1}.
\end{equation}
In light of \eqref{e:localO}, \eqref{e:localBA} and \eqref{e:localCD}, to prove Theorem \ref{t:localmain}, by complex interpolation it suffices to prove that whenever $d \geq 3$ and $(\frac{1}{q},\frac{1}{p})$ belongs to the line segment $[E,A)$, then
\begin{equation} \label{e:localEA}
\bigg\|\sum_j\lambda_j|e^{it\Delta}Pf_j|^2\bigg\|_{L^{p}_tL^{q}_x}\lesssim\|\lambda\|_{\ell^{p}}
\end{equation}
holds for all orthonormal systems $(f_j)_j$ in $L^2(\mathbb{R}^d)$ and all sequences 
$
\lambda=(\lambda_j)_j$ in $\ell^{p}(\mathbb{C}).
$

Our proof of \eqref{e:localEA} is based on bilinear real interpolation in the spirit of the proof by Keel--Tao \cite{KeelTao} of the endpoint classical Strichartz estimates (at the point $D$). To simplify the notation, we write $\|\cdot\|_{(p,r),q}=\|\cdot\|_{L^{p,r}_tL^q_x}$ and $\|\cdot\|_{p,q}=\|\cdot\|_{L^{p}_tL^q_x}$. Also, since we are using mixed-norm spaces, some care is needed in the use of real interpolation; the precise fact that we shall use is
\begin{equation} \label{e:realintermixed}
(L^{p_0}(L^{q_0}),L^{p_1}(L^{q_1}))_{\theta,p}=L^p(L^{q,p})
\end{equation}
whenever $p_0,p_1,q_0,q_1 \in [1,\infty)$, $\frac{1}{p}=\frac{1-\theta}{p_0}+\frac{\theta}{p_1}$, $\frac{1}{q}=\frac{1-\theta}{q_0}+\frac{\theta}{q_1}$ and $\theta\in(0,1)$ (see, for example, \cite{Lions-Peetre} and \cite{Cwikel74}). We also note that 
\begin{equation} \label{e:realinterseq}
(\ell^{p_0,r_0},\ell^{p_1,r_1})_{\theta,r}=\ell^{p,r}
\end{equation}
whenever $p_0,p_1\in[1,\infty)$, $\frac{1}{p}=\frac{1-\theta}{p_0}+\frac{\theta}{p_1}$, $\theta\in(0,1)$ and all $r_0,r_1,r\in[1,\infty]$. For further details about real interpolation spaces, we refer the reader to \cite{BerghLofstrom} .

\begin{proof}[Proof of \eqref{e:localEA}]
Suppose $\frac{1}{p} = \frac{d}{(d-1)q}$ and $\frac{1}{q} \in [\frac{d-1}{2d},\frac{d-1}{d+1})$. First we use the duality in Proposition \ref{p:dualityprinciple} and re-labelling ($\sigma = p'$ and $r = 2q'$) to re-write the goal \eqref{e:localEA} as
\begin{equation} \label{e:AFlocaldual}
\|W_1T^0W_2\|_{\mathcal{C}^{\sigma}} \lesssim \|W_1\|_{u,r}\|W_2\|_{u,r}
\end{equation}
for $\sigma \in [2,d+1)$, $u = 2\sigma$ and $\frac{2d}{r} = \frac{d-1}{\sigma} + 1$. Here, $T^0$ is the operator given by
\[
T^0F(x,t) = \int_\mathbb{R} e^{i(t-t')\Delta}P^2F(\cdot,t')(x)\, \mathrm{d}t'.
\]
We decompose this operator dyadically $T^0 = \sum_{j \in \mathbb{Z}} T_j$ by writing
\[
T_jF(x,t) = \int_\mathbb{R} \psi_j(t-t') e^{i(t-t')\Delta}P^2F(\cdot,t')(x)\, \mathrm{d}t'
\]
where $\psi_j = \psi(\frac{\cdot}{2^j})$. Here, we choose $\psi \in C^\infty_c(\mathbb{R})$ to be supported in $[\frac{1}{2},2]$, $\sum_{j \in \mathbb{Z}} \psi(\frac{t}{2^j}) = 1$ for $t \neq 0$, and $\widehat{\psi}(0) = 0$; the existence of such a function is easily justified.

Also, we write
\[
\beta(r,s) = \frac{d+1}{2}  - d\bigg(\frac{1}{r} + \frac{1}{s}\bigg).
\]

We establish estimates at $\sigma = 2$ and $\sigma = \infty$, beginning in the former case with the claim
\begin{equation} \label{e:AFC2}
\bigg\|W_1\sum_{j \in \mathbb{Z}} 2^{-\beta(r,r)j} T_jW_2 \bigg\|_{\mathcal{C}^2} \lesssim \|W_1\|_{4,r}\|W_2\|_{4,r}
\end{equation}
for each $r \in (2,4)$. For this, we break-up each $T_j$ one stage further by writing $T_jF(x,t) = T_{j,0}F(x,t) + T_{j,1}F(x,t)$, where
\[
T_{j,1}F(x,t) = \int_\mathbb{R} \psi_j(t-t') e^{i(t-t')\Delta}P^2[\chi_{B(x,C2^j)}F](\cdot,t')(x)\, \mathrm{d}t'
\]
and $B(x,C2^j)$ is the ball centred at $x$ with radius $C2^j$, where the constant $C$ is chosen momentarily to be sufficiently large. 

The contribution from $T_{j,0}$ should be considered as an error term which is more easily handled, so we begin with this part. Observe if $\Psi \in C^\infty(\mathbb{R}^d)$ has compact Fourier support in the ball $B(0,2)$ and $|x| \geq 100|t|$, then by repeated integration by parts we can obtain
\begin{equation} \label{e:nonstatph}
|e^{it\Delta}\Psi(x)| \leq \frac{C_N}{(1 + |x|)^N}
\end{equation}
for any $N \geq 0$. Since the kernel of $W_1T_{j,0}W_2$ at $(x,t,y,t')$ is given by
\[
W_1(x,t)\chi_{B(0,C2^j)^c}(x-y) \psi_j(t-t')e^{i(t-t')\Delta}\Psi(x-y)W_2(y,t')
\]
with $\widehat{\Psi} = \phi^2$, it follows from \eqref{e:nonstatph} (for an appropriate choice of $C$) that
\begin{align*}
& \sum_{j \in \mathbb{Z}} 2^{-\beta(r,r)j}\|W_1T_{j,0}FW_2\|_{\mathcal{C}^2} \\
& \leq C_N \sum_{j \in \mathbb{Z}} \frac{2^{-\beta(r,r)j}}{(1 + 2^j)^N} \bigg(\int_{\mathbb{R}^2}\int_{\mathbb{R}^{2d}}  \frac{|\psi_j(t-t')|^2  |W_1(x,t)|^2|W_2(y,t')|^2}{(1 + |x-y|)^N} \, \mathrm{d}x \mathrm{d}y \mathrm{d}t\mathrm{d}t'\bigg)^{1/2}.
\end{align*}
Young's convolution inequality implies
\begin{align*}
& \sum_{j \in \mathbb{Z}}2^{-\beta(r,r)j} \|W_1T_{j,0}FW_2\|_{\mathcal{C}^2} \\
& \leq C_N \sum_{j \in \mathbb{Z}} \frac{2^{-\beta(r,r)j}}{(1 + 2^j)^N} \bigg(\int_{\mathbb{R}^2}  |\psi_j(t-t')|^2  \|W_1(\cdot,t)\|_r^2 \|W_2(\cdot,t')\|_r^2 \, \mathrm{d}t\mathrm{d}t'\bigg)^{1/2}
\end{align*}
as long as $r \in [2,4]$ and $N$ is sufficiently large, and a further application gives
\begin{align*}
 \sum_{j \in \mathbb{Z}} 2^{-\beta(r,r)j}\|W_1T_{j,0}FW_2\|_{\mathcal{C}^2} & \leq C_N \sum_{j \in \mathbb{Z}} \frac{2^{1-\beta(r,r)j}}{(1 + 2^j)^N} \|W_1\|_{4,r}\|W_2\|_{4,r} 
\end{align*}
and hence the desired estimate
\[
\sum_{j \in \mathbb{Z}} 2^{-\beta(r,r)j} \|W_1T_{j,0}W_2\|_{\mathcal{C}^2} \lesssim \|W_1\|_{4,r}\|W_2\|_{4,r}.
\]

Regarding $T_{j,1}$, we first proceed by using the Cauchy--Schwarz inequality to obtain
\begin{align*}
& \bigg\| \sum_{j\in \mathbb{Z}} 2^{-\beta(r,r)j} W_1T_{j,1}W_2 \bigg\|_{\mathcal{C}^2}^2 \\
& \lesssim \sum_{j \in \mathbb{Z}} 2^{-2\beta(r,r)j} \int_{\mathbb{R}^2}\int_{|x-y| \lesssim 2^j} |\psi_j(t-t')| |W_1(x,t)|^2  |W_2(y,t')|^2 \\
& \qquad \qquad \qquad \qquad \qquad \qquad |e^{i(t-t')\Delta}\Psi(x-y)|^2 \, \mathrm{d}x\mathrm{d}y \mathrm{d}t \mathrm{d}t'
\end{align*}
and to estimate this by $\|W_1\|_{4,r}^2\|W_2\|_{4,r}^2$ for each $r \in (2,4)$, it suffices to prove that
\begin{align} \label{e:Bjgoal}
\sum_{j \in \mathbb{Z}} |\mathcal{T}_{j,r}(V_1,V_2)| \lesssim \|V_1\|_{2,r/2}\|V_2\|_{2,r/2}
\end{align}
for each $r \in (2,4)$, where $\mathcal{T}_{j,r}$ is the bilinear operator given by
\begin{align*}
& \mathcal{T}_{j,r}(V_1,V_2) \\
& := 2^{-2\beta(r,r)j}\int_{\mathbb{R}^2}\int_{|x-y| \lesssim 2^j} |\psi_j(t-t')|  |e^{i(t-t')\Delta}\Psi(x-y)|^2 V_1(x,t) V_2(y,t') \, \mathrm{d}x\mathrm{d}y \mathrm{d}t \mathrm{d}t'.
\end{align*}
We shall establish \eqref{e:Bjgoal} by bilinear interpolation, so we proceed by fixing $r_\ast \in (2,4)$ and establishing a range of asymmetric estimates on each $\mathcal{T}_{j,r_\ast}$. First, we use the dispersive estimate for Schr\"odinger propagator to obtain
\begin{align*}
|\mathcal{T}_{j,r_\ast}(V_1,V_2)| & \lesssim \sum_{j \in \mathbb{Z}} 2^{-2\beta(r_\ast,r_\ast)j}2^{-dj} \times \\
& \int_{\mathbb{R}^2} \int_{|x-y| \lesssim 2^j} |\psi_j(t-t')| |V_1(x,t)|  |V_2(y,t')| \, \mathrm{d}x\mathrm{d}y \mathrm{d}t \mathrm{d}t'
\end{align*}
and then, continuing in a similar manner to the case of $T_{j,0}$, we use Young's convolution inequality to deduce that
\begin{align*}
& |\mathcal{T}_{j,r_\ast}(V_1,V_2)| \\
& \lesssim 2^{-2\beta(r_\ast,r_\ast)j} 2^{(2d(1-\frac{1}{r} - \frac{1}{s})-d)j}\int_{\mathbb{R}^2}  |\psi_j(t-t')| \|V_1(\cdot,t)\|_{r/2}  \|V_2(\cdot,t')\|_{s/2} \, \mathrm{d}t \mathrm{d}t'
\end{align*}
for any $r,s \geq 2$ such that $\frac{1}{r} + \frac{1}{s} \geq \frac{1}{2}$. Another use of Young's convolution inequality allows us to obtain
\begin{align*}
|\mathcal{T}_{j,r_\ast}(V_1,V_2)| \lesssim  2^{\gamma(r,s)j} \|V_1\|_{2,r/2}  \|V_2\|_{2,s/2}
\end{align*}
where
\[
\gamma(r,s) :=  2\beta(r,s)-2\beta(r_\ast,r_\ast) = 2d\bigg(\frac{2}{r_\ast} - \bigg(\frac{1}{r} + \frac{1}{s}\bigg)\bigg).
\]
It follows from these estimates that the vector-valued bilinear operator $\mathcal{T} = \{\mathcal{T}_{j,r_\ast}\}_j$ is bounded between the spaces
\begin{align}
A_0 & \times B_0 \to C_0, \label{e:000}\\
A_0 & \times B_1 \to C_1, \label{e:011}\\
A_1 & \times B_0 \to C_1, \label{e:101}
\end{align}
where
\[ 
A_0 = B_0= L^{2}_t L^{r_0/2}_x, \qquad A_1 = B_1= L^{2}_t L^{r_1/2}_x 
\]
and
\begin{equation} \label{e:C0C1}
C_0 = \ell_{\gamma(r_0,r_0)}^\infty, \qquad C_1= \ell_{\gamma(r_1,r_0)}^\infty
\end{equation}
with $r_0,r_1 \in (2,4)$. Specifically, we choose $r_0, r_1 \in (2,4)$ such that $\frac{1}{r_0} = \frac{1}{r_\ast} + \delta$ and $\frac{1}{r_1} = \frac{1}{r_\ast} - 2\delta$, where $\delta > 0$ is sufficiently small. This choice of $r_0$ and $r_1$ ensures that $\gamma(r_0,r_0) < 0 < \gamma(r_1,r_0) = \gamma(r_0,r_1)$.

In the above, $\| (a_j)_j \|_{\ell^\infty_\beta(X)} = \sup_{j \in \mathbb{Z}} 2^{j\beta}\|a_j\|_X$ and when $p < \infty$ we have 
\[
\| (a_j)_j \|_{\ell^p_\beta(X)} = \bigg(\sum_{j \in \mathbb{Z}} 2^{j\beta}\|a_j\|_X^p\bigg)^{1/p}. 
\]
In this notation, \eqref{e:Bjgoal} will follow once we prove that $\mathcal{T}$ is bounded as follows:
\begin{equation} \label{e:curlyTgoal}
(L^2_tL^{r_0/2},L^2_tL^{r_1/2})_{\frac{1}{3},2} \times (L^2_tL^{r_0/2},L^2_tL^{r_1/2})_{\frac{1}{3},2} \to \ell^1_0.
\end{equation}
Indeed 
$
(L^2_tL^{r_0/2},L^2_tL^{r_1/2})_{\frac{1}{3},2} = L^2_tL^{r_\ast/2,2}
$
and from $r_\ast < 4$ it follows that $L^2_tL^{r_\ast/2}_x$ is embedded in $L^2_tL^{r_\ast/2,2}_x$; hence we obtain \eqref{e:Bjgoal} for all $r = r_\ast \in (2,4)$.

In order to establish \eqref{e:curlyTgoal}, we apply the following bilinear interpolation result; it is a special case of a more general statement which can be found in Bergh--L\"ofstr\"om \cite{BerghLofstrom} (see exercise 5 (b), p. 76). 
\begin{proposition} \label{p:bilinearint}
Suppose $A_0, A_1, B_0, B_1, C_0$ and $C_1$ are Banach spaces, and the bilinear operator $T$ is bounded from $A_0 \times B_0 \to C_0$, $A_0 \times B_1 \to C_1$, and $A_1 \times B_0 \to C_1$. Then $T$ is bounded
\[
(A_0,A_1)_{\frac{\theta}{2},2} \times (B_0,B_1)_{\frac{\theta}{2},2} \to (C_0,C_1)_{\theta,1} 
\]
for all $\theta \in (0,1)$.
\end{proposition}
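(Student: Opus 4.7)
The plan is to derive the estimate via the standard Lions-Peetre bilinear real-interpolation argument using the $K$-method, following the outline of the referenced exercise in Bergh-L\"ofstr\"om. The key structural observation is that the assumption $\theta<1$ keeps the target $(C_0,C_1)_{\theta,1}$ strictly away from the absent $A_1\times B_1$ corner, on which no hypothesis is imposed.

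Working in the dense subspaces $A_0\cap A_1$ of $(A_0,A_1)_{\theta/2,2}$ and $B_0\cap B_1$ of $(B_0,B_1)_{\theta/2,2}$, I would, for each scale $t>0$, pick near-optimal $K$-decompositions $a=a_0(t)+a_1(t)$ and $b=b_0(t)+b_1(t)$ so that
\[
\|a_0(t)\|_{A_0}+t\|a_1(t)\|_{A_1}\le 2K(t,a;A_0,A_1),\qquad \|b_0(t)\|_{B_0}+t\|b_1(t)\|_{B_1}\le 2K(t,b;B_0,B_1).
\]
Expanding $T(a,b)$ bilinearly into four pieces, the three hypothesized corner estimates immediately give
\begin{align*}
\|T(a_0,b_0)\|_{C_0} & \le M_0\|a_0\|_{A_0}\|b_0\|_{B_0}\lesssim K(t,a)K(t,b),\\
\|T(a_0,b_1)\|_{C_1},\ \|T(a_1,b_0)\|_{C_1} & \lesssim \tfrac{K(t,a)K(t,b)}{t},
\end{align*}
so that assigning $T(a_0,b_0)$ to the $C_0$-part and the two cross terms to the $C_1$-part contributes at most $\lesssim K(t,a)K(t,b)$ to $K(t,T(a,b);C_0,C_1)$.

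The remaining piece $T(a_1(t),b_1(t))$ carries no direct estimate from the hypotheses, and controlling it is the technical heart of the argument. The standard resolution exploits the identity
\[
T(a_1(t),b_1(t))=T(a,b)-T(a_0(t),b)-T(a,b_0(t))+T(a_0(t),b_0(t)),
\]
combined with an iteration of the decomposition at geometrically finer scales; the resulting telescoping sum converges precisely because $\theta<1$ and the target therefore remains strictly separated from the missing corner. Once this is carried out, one obtains the pointwise $K$-functional inequality
\[
K(t,T(a,b);C_0,C_1)\lesssim K(t,a;A_0,A_1)\,K(t,b;B_0,B_1)\qquad (t>0).
\]

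Finally, integrating against $t^{-\theta}\,dt/t$ and applying the Cauchy-Schwarz inequality in $L^2(dt/t)$,
\begin{align*}
\|T(a,b)\|_{(C_0,C_1)_{\theta,1}}
&=\int_0^\infty t^{-\theta}K(t,T(a,b);C_0,C_1)\,\frac{dt}{t}\\
&\lesssim \int_0^\infty \bigl(t^{-\theta/2}K(t,a;A_0,A_1)\bigr)\bigl(t^{-\theta/2}K(t,b;B_0,B_1)\bigr)\,\frac{dt}{t}\\
&\le \|a\|_{(A_0,A_1)_{\theta/2,2}}\|b\|_{(B_0,B_1)_{\theta/2,2}}
\end{align*}
completes the argument. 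The main obstacle is the iteration controlling $T(a_1,b_1)$; the final Cauchy-Schwarz pairing is exactly what matches the Lorentz index relation $\tfrac{1}{1}=\tfrac{1}{2}+\tfrac{1}{2}$ and accounts for the halved parameter $\theta/2$ on the input spaces.
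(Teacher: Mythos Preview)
The paper does not prove this proposition; it simply cites it as a special case of Exercise~5(b), p.~76, in Bergh--L\"ofstr\"om. Your attempted proof, however, contains a genuine gap.

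The problem is the claimed pointwise $K$-functional inequality
\[
K(t,T(a,b);C_0,C_1)\lesssim K(t,a;A_0,A_1)\,K(t,b;B_0,B_1),
\]
which is \emph{false} in general, even when all four corner bounds hold. Take $A_0=A_1=B_0=B_1=C_0=C_1=\mathbb{R}$ and $T(a,b)=ab$. Then $K(t,x;\mathbb{R},\mathbb{R})=\min(1,t)|x|$, so for $t<1$ one has $K(t,ab)=t|ab|$ while $K(t,a)K(t,b)=t^2|ab|$, and the ratio $1/t$ is unbounded. No ``telescoping'' manipulation of $T(a_1(t),b_1(t))$ can produce an inequality that is demonstrably false; your description of that step is accordingly too vague to be a proof, and the subsequent Cauchy--Schwarz step rests entirely on it.

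The standard argument does not go through a pointwise $K$-inequality at a single scale. One correct route is the discrete $J$-method: write $a=\sum_n u_n$, $b=\sum_m v_m$ with near-optimal $J$-decompositions, set $\alpha_n=J(2^n,u_n)$, $\beta_m=J(2^m,v_m)$, and observe that the three hypotheses give $J(2^{\max(n,m)},T(u_n,v_m);C_0,C_1)\lesssim \alpha_n\beta_m$. Grouping $T(a,b)=\sum_k w_k$ with $w_k=\sum_{\max(n,m)=k}T(u_n,v_m)$, one bounds $\sum_k 2^{-k\theta}J(2^k,w_k)$ by a double sum that factors after a Hardy-type inequality (this is where $\theta=\theta_0+\theta_1<1$, i.e.\ $\theta_0,\theta_1>0$, is genuinely used), and the H\"older/Cauchy--Schwarz pairing of the $\ell^2$-norms then yields the result. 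The key structural point you correctly identified---that the missing $A_1\times B_1$ corner is compensated by $\theta<1$---enters here, but at the level of a geometric sum in the $J$-representation, not as a pointwise $K$-functional bound.
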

One can check that by Proposition \ref{p:bilinearint} with $\theta = \frac{2}{3}$ we immediately obtain \eqref{e:curlyTgoal} and this completes our proof of \eqref{e:AFC2}.

From the above argument, it is clear that
\begin{equation} \label{e:AFC2z}
\bigg\|W_1\sum_{j \in \mathbb{Z}} 2^{zj} T_jW_2 \bigg\|_{\mathcal{C}^2} \lesssim \|W_1\|_{4,r}\|W_2\|_{4,r}
\end{equation}
whenever $z \in \mathbb{C}$ is such that ${\rm Re}(z) = -\beta(r,r)$ and $r \in (2,4)$. Note that $\text{Re}\,(z) \in (-\frac{1}{2},\frac{d-1}{2})$ for this range of $r$.

Turning to the case $\sigma = \infty$, we claim that
\begin{equation} \label{e:AFC2z=1main} 
\bigg\|W_1\sum_{j \in \mathbb{Z}} 2^{zj} T_jW_2 \bigg\|_{\mathcal{C}^\infty} \lesssim \|W_1\|_{\infty,\infty}\|W_2\|_{\infty,\infty}
\end{equation}
whenever $z \in \mathbb{C}$ is such that ${\rm Re}(z) = -1$, and for this, it suffices to prove
\begin{equation} \label{e:AFC2z=1}
\bigg\|\sum_{j \in \mathbb{Z}} 2^{zj} T_jF \bigg\|_{L^2_{x,t}} \lesssim \|F\|_{L^2_{x.t}}.
\end{equation}
We have
\[
\widehat{T_jF}(\xi,\tau) = C 2^j \widehat{\psi}(2^j(\tau + |\xi|^2)) \varphi(\xi)^2 \widehat{F}(\xi,\tau)
\]
and since $|\widehat{\psi}(\tau)| \lesssim \min\{|\tau|,|\tau|^{-1}\}$ it follows that 
\[
\sum_{j \in \mathbb{Z}} |\widehat{\psi}(2^j(\tau + |\xi|^2))| \lesssim 1
\]
uniformly in $(\xi,\tau)$. Hence \eqref{e:AFC2z=1} immediately follows when ${\rm Re}(z) = -1$, yielding \eqref{e:AFC2z=1main}.

Finally, we use complex interpolation between the estimates \eqref{e:AFC2z} and \eqref{e:AFC2z=1main}. At $z=0$ this gives the goal \eqref{e:AFlocaldual}, where the range of $\sigma \in [2,d+1)$ arises because the exponent $r$ in \eqref{e:AFC2z} is strictly less than $4$.
\end{proof}

To end this section on the frequency localised estimates, we give some further remarks, firstly, by considering the cases $d=1$ and $d=2$. Since \eqref{e:localO} and \eqref{e:localBA} both hold when $d=1$ and $d=2$ by the same reasoning given above for $d \geq 3$, whenever $(\frac{1}{q},\frac{1}{p})$ belongs to ${\rm int}\,OAB$ and $\alpha = \alpha^*(p,q)$, the estimate \eqref{e:localmain} holds in these dimensions too; the estimates in this region will be used in the proof of Theorem \ref{t:smooth} below. When $d=2$, further (essentially sharp) estimates are available in the remaining region above the critical line $[O,A]$. Indeed, since the endpoint case for the classical estimate \eqref{e:classical} at $C = D$ fails, the estimate \eqref{e:localCD} is not available; however, taking points arbitrarily close to this endpoint we may obtain frequency localised estimates in ${\rm int}\,OCA$ whenever $\alpha < p$.

Our final remark concerns the subregion ${\rm int}\,OEG$, where $G = (0,\frac{1}{2})$. In this case, the frequency localised estimates in Theorem \ref{t:localmain} may be obtained by different means following the argument in Section 3 of \cite{CHP-1}. Indeed, for this subregion, it suffices to consider the line segment $(E,G)$ where $p=2$, in which case the authors considered the dual form \eqref{e:densityform} and capitalised on the fact that rather direct computations can be made on the Schatten $2$-norm of the density.

\section{Strong-type estimates: Proof of Theorem \ref{t:smooth}} \label{section:strong}
\subsection{Sufficiency}
Here, we prove that $\mathcal{O}^s(p,q;\alpha^*(p,q))$ holds whenever $(\frac{1}{q},\frac{1}{p})$ belongs to ${\rm int}\,OAB$ and $2s=d-(\frac{2}{p}+\frac{d}{q})$. Note that this will quickly imply the remaining sufficiency claim in Theorem \ref{t:smooth}(2), namely $\mathcal{O}^s(p,q;\alpha)$ holds for all $\alpha \in [1,p)$ whenever $(\frac{1}{q},\frac{1}{p})$ belongs to ${\rm int}\,OCDA$ and $2s=d-(\frac{2}{p}+\frac{d}{q})$. For $d \geq 3$, this follows by complex interpolation between estimates $\mathcal{O}^s(p,q;\alpha^*(p,q))$ in ${\rm int}\,OAB$ with the estimates $\mathcal{O}^s(p,q;1)$ on the line segment $(C,D)$; note that $\alpha^*(p,q) = p$ for $(\frac{1}{q},\frac{1}{p})$ on the line segment $(O,A)$, so we obtain the claim by interpolating with points in the interior of $OAB$ arbitrarily close to the line $(O,A)$. When $d=2$ the points $C$ and $D$ coincide and at this point the classical (single function) Strichartz estimate \eqref{e:classical} fails. So, we interpolate points belonging to ${\rm int}\,OCA$ arbitrarily close to the point $C$ and points belonging to ${\rm int}\,OAB$ arbitrarily close to the line $(O,A)$. Such arguments were used in \cite{frank-sabin-2} to prove Theorem \ref{t:FS2}.

The rest of the section is used to prove that $\mathcal{O}^s(p,q;\alpha^*(p,q))$ holds whenever $(\frac{1}{q},\frac{1}{p})$ belongs to ${\rm int}\,OAB$ and $2s=d-(\frac{2}{p}+\frac{d}{q})$. There are several stages to the proof. First, we combine the frequency localised estimates in Theorem \ref{t:localmain} with the globalisation argument in Proposition \ref{p:Bourgaintrick} to obtain restricted weak-type estimates. Next, we observe that a refinement of Theorem \ref{t:s=0} is possible on the line segment $(A,F)$, where $L^p_t$ is replaced by the Lorentz space $L^{p,r}_t$ for an appropriate $r < p$, and this allows us to upgrade our restricted weak-type estimates to restricted strong-type estimates in the interior of $OFA$. Finally, an argument using real interpolation yields the desired estimates $\mathcal{O}^s(p,q;\alpha^*(p,q))$ on ${\rm int}\,OFA$, which, by further use of complex interpolation, yields $\mathcal{O}^s(p,q;\alpha^*(p,q))$ on ${\rm int}\,OAB$.

\subsection*{Restricted weak-type estimates}
First we claim that whenever $d \geq 1$, $s\in\mathbb{R}$, $k \in \mathbb{Z}$, $(\frac{1}{q},\frac{1}{p})$ belongs to $OAB \setminus [A,O)$ and $\alpha=\alpha^*(p,q)$, then
\begin{equation}\label{e:localOAB}
\bigg\|
\sum_j\lambda_j|e^{it\Delta}P_k |D|^{-s}f_j|^2
\bigg\|_{L^p_tL^q_x}
\lesssim
2^{k(d-2s-\frac{2}{p}-\frac{d}{q})}
\|\lambda\|_{\ell^\alpha}
\end{equation}
holds for all orthonormal systems $(f_j)_j$ in $L^2(\mathbb{R}^d)$ and all sequences 
$
\lambda=(\lambda_j)_j$ in $\ell^\alpha(\mathbb{C}).
$
This follows immediately from Theorem \ref{t:localmain} (and the remarks at the end of Section \ref{section:local}) when $k=0$, and the case of general $k \in \mathbb{Z}$ is subsequently obtained by a rescaling argument.

Using Proposition \ref{p:Bourgaintrick}, we may upgrade the estimates in \eqref{e:localOAB}.
\begin{proposition} \label{p:afterBtrick}
Suppose $d \geq 1$ and let $(\frac{1}{q},\frac{1}{p})$ belong to ${\rm int}\,OAB$. If $2s=d-(\frac{2}{p}+\frac{d}{q})$ and $\alpha = \alpha^*(p,q)$, then 
\begin{equation} \label{e:RWTinOAB}
\bigg\|\sum_j\lambda_j |e^{it\Delta}f_j|^2\bigg\|_{L^{p,\infty}_tL^{q}_x} \lesssim \|\lambda\|_{\ell^{\alpha,1}}
\end{equation}
holds for all orthonormal sequences $(f_j)_j$ in $\dot{H}^s(\mathbb{R}^d)$ and all sequences 
$
\lambda=(\lambda_j)_j$ in $\ell^{\alpha,1}(\mathbb{C}).
$
\end{proposition}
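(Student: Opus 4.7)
The plan is to combine the frequency-localised estimates \eqref{e:localOAB} with the globalisation machinery of Proposition \ref{p:Bourgaintrick}. Setting $\tilde f_j = |D|^s f_j$, the orthonormality of $(f_j)_j$ in $\dot{H}^s(\mathbb{R}^d)$ becomes orthonormality of $(\tilde f_j)_j$ in $L^2(\mathbb{R}^d)$, and I would write $g_j := e^{it\Delta}f_j = e^{it\Delta}|D|^{-s}\tilde f_j$. Since $P_k$ commutes with $e^{it\Delta}$, one has $P_k g_j = e^{it\Delta} P_k |D|^{-s}\tilde f_j$, which is precisely the expression estimated in \eqref{e:localOAB}.

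Given $(\tfrac{1}{q},\tfrac{1}{p}) \in {\rm int}\, OAB$, I would pick $p_0 < p < p_1$ sufficiently close to $p$ so that the auxiliary points $(\tfrac{1}{q},\tfrac{1}{p_i})$ still lie in ${\rm int}\, OAB \subset OAB \setminus [A,O)$. Applying \eqref{e:localOAB} at each of these points with the regularity $s$ fixed by the target scaling $2s = d - \tfrac{2}{p} - \tfrac{d}{q}$ produces
\[
\bigg\|\sum_j \lambda_j |P_k g_j|^2\bigg\|_{L^{p_i}_tL^q_x} \lesssim 2^{k(d-2s-\frac{2}{p_i}-\frac{d}{q})}\|\lambda\|_{\ell^{\alpha^*(p_i,q)}} = 2^{2k(\frac{1}{p}-\frac{1}{p_i})}\|\lambda\|_{\ell^{\alpha^*(p_i,q)}}
\]
for every $k\in\mathbb{Z}$. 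After the trivial embedding $L^{p_i}_t \hookrightarrow L^{p_i,\infty}_t$, this is exactly the hypothesis \eqref{e:Btrickhyp} of Proposition \ref{p:Bourgaintrick} with $\varepsilon_0 = 2(\tfrac{1}{p_0}-\tfrac{1}{p}) > 0$, $\varepsilon_1 = 2(\tfrac{1}{p}-\tfrac{1}{p_1}) > 0$ and $\alpha_i = \alpha^*(p_i,q)$.

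Proposition \ref{p:Bourgaintrick} then yields the desired $L^{p,\infty}_tL^q_x$--$\ell^{\alpha,1}$ estimate, and the interpolated exponents will match: the Bourgain-trick parameter $\theta = \varepsilon_1/(\varepsilon_0 + \varepsilon_1)$ is arranged precisely so that the target $p$ is recovered, and the linearity of $\tfrac{1}{\alpha^*(r,q)} = \tfrac{1}{dr} + \tfrac{1}{q}$ in $\tfrac{1}{r}$ makes
\[
\frac{1}{\alpha} = \frac{\theta}{\alpha^*(p_0,q)} + \frac{1-\theta}{\alpha^*(p_1,q)} = \frac{1}{d}\bigg(\frac{\theta}{p_0}+\frac{1-\theta}{p_1}\bigg) + \frac{1}{q} = \frac{1}{\alpha^*(p,q)},
\]
so $\alpha$ is the required value $\alpha^*(p,q)$. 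The principal technical wrinkle is the uniform-boundedness hypothesis on $(g_j)_j$ in $L^{2p_i}_t L^{2q}_x$, since the single-function Strichartz scaling $\tfrac{1}{p_i}+\tfrac{d}{2q}=\tfrac{d}{2}-s$ uniquely fixes $p_i = p$; I would handle this by first proving the estimate for Schwartz initial data and finite sums $\lambda = \chi_E$, where all the partial Littlewood--Paley sums appearing in the proof of Proposition \ref{p:Bourgaintrick} are unambiguously defined, and then extending to the general case by a standard density and Fatou-type argument.
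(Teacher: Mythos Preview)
Your proof is correct and follows essentially the same route as the paper: perturb $p$ to nearby $p_0,p_1$ with $q$ fixed, read off the frequency-localised estimates \eqref{e:localOAB} at these auxiliary exponents, and feed them into Proposition \ref{p:Bourgaintrick}. The paper makes the symmetric choice $\tfrac{1}{p_0}=\tfrac{1}{p}+\delta$, $\tfrac{1}{p_1}=\tfrac{1}{p}-\delta$ (so $\theta=\tfrac{1}{2}$), whereas you allow an asymmetric perturbation and verify directly that the interpolated exponents match; you also flag the uniform-boundedness hypothesis of Proposition \ref{p:Bourgaintrick}, which the paper does not comment on.
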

\begin{proof}
Let $(\frac{1}{q},\frac{1}{p})$ be an arbitrary point from ${\rm int}\,OAB$, and choose $\delta > 0$ sufficiently small so that $(\frac{1}{q},\frac{1}{p_0})$ and $(\frac{1}{q},\frac{1}{p_1})$ also belong to ${\rm int}\,OAB$, where $\frac{1}{p_0} = \frac{1}{p} + \delta$ and $\frac{1}{p_1} = \frac{1}{p} - \delta$. Next, observe that if we define
\[
\varepsilon_i = (-1)^{i}\bigg(\frac{2}{p_0} + \frac{d}{q} - d + 2s\bigg)
\]
for $i=0,1$, then \eqref{e:localOAB} implies
\begin{equation*}
\bigg\|
\sum_j\lambda_j|e^{it\Delta}P_k |D|^{-s}f_j|^2
\bigg\|_{L^{p_i,\infty}_tL^q_x}
\lesssim
2^{(-1)^{i+1}\varepsilon_i}
\|\lambda\|_{\ell^{\alpha_i}}
\end{equation*}
holds for all orthonormal systems $(f_j)_j$ in $L^2(\mathbb{R}^d)$, where $\alpha_i = \alpha^*(p_i,q)$. 

Since $\frac{2}{p} + \frac{d}{q} = d-2s$, our choice of $p_0$ and $p_1$ means that $\varepsilon_0 = \varepsilon_1$ and thus, from Proposition \ref{p:Bourgaintrick}, we immediately obtain
\begin{equation*}
\bigg\|
\sum_j\lambda_j|e^{it\Delta}|D|^{-s}f_j|^2
\bigg\|_{L^{p,\infty}_tL^{q}_x}
\lesssim
\|\lambda\|_{\ell^{\alpha^*(p,q),1}}
\end{equation*}
for all orthonormal systems $(f_j)_j$ in $L^2(\mathbb{R}^d)$, as required.
\end{proof}

\subsection*{Refinement of Theorem \ref{t:s=0}}
At a key stage in the proof of Theorem \ref{t:s=0} in \cite{frank-sabin-1}, the Hardy--Littlewood--Sobolev inequality is used; as we shall see below, we may simply invoke the optimal Lorentz space refinement of the Hardy--Littlewood--Sobolev inequality (due to O'Neil \cite{ONeil}) to obtain the following.
\begin{proposition}\label{p:s=0Lorentz}
Suppose $d \geq 1$ and let $(\frac{1}{q},\frac{1}{p})$ belong to $(A,F)$. If $\alpha=\alpha^*(p,q)$, then
\[
\bigg\|\sum_j\lambda_j |e^{it\Delta}f_j|^2\bigg\|_{L^{p,\alpha}_tL^{q}_x} \lesssim \|\lambda\|_{\ell^{\alpha}}
\]
holds for all orthonormal sequences $(f_j)_j$ in $L^2(\mathbb{R}^d)$ and all sequences 
$
\lambda=(\lambda_j)_j$ in $\ell^{\alpha}(\mathbb{C}).
$
\end{proposition}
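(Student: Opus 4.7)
The strategy is to retrace the proof of Theorem \ref{t:s=0} from Frank--Sabin \cite{frank-sabin-1}, identifying the single appeal to the Hardy--Littlewood--Sobolev inequality therein and substituting O'Neil's Lorentz space refinement \cite{ONeil}; this allows the $L^p_t$ norm on the left-hand side to be upgraded to the finer $L^{p,\alpha}_t$.

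First, by the duality principle of Proposition \ref{p:dualityprinciple} (taking $(p,r) = (p,\alpha)$, $(q,\widetilde{r}) = (q,q)$, $(\alpha,\beta) = (\alpha,\alpha)$, and using that $\mathcal{C}^{\alpha',\alpha'} = \mathcal{C}^{\alpha'}$), the estimate to be proved is equivalent to the Schatten inequality
\[
\|WUU^*\overline{W}\|_{\mathcal{C}^{\alpha'}} \lesssim \|W\|_{L^{2p',2\alpha'}_t L^{2q'}_x}^2.
\]
Frank--Sabin obtain the corresponding estimate with $L^{2p'}_t$ (that is, $L^{2p',2p'}_t$) in place of $L^{2p',2\alpha'}_t$ on the right, via Stein's complex interpolation applied to an analytic family of operators. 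One endpoint yields an operator norm ($\mathcal{C}^\infty$) bound via classical Strichartz, while the other endpoint gives a Hilbert--Schmidt bound which, after applying the dispersive estimate and separating the spatial and temporal variables, reduces to controlling the bilinear form
\[
\int_\mathbb{R}\int_\mathbb{R} \frac{g(t)\,g(s)}{|t-s|^{\beta}}\,\mathrm{d}t\,\mathrm{d}s, \qquad g(t) = \|W(\cdot,t)\|_{L^{2q'}_x}^2,
\]
for an appropriate $\beta \in (0,1)$. Standard HLS bounds this by an $L^r_t$ norm squared of $g$ for suitable $r$; O'Neil's refinement keeps track of a non-trivial second Lorentz index, yielding instead a bound of the form $\|g\|_{L^{r,c}_t}^2$ for a Lorentz exponent $c$ that can be chosen freely (compatibly with the analytic parameter $z$) thanks to the $\frac{1}{a}+\frac{1}{b}$ flexibility in O'Neil's inequality.

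Running the refined Hilbert--Schmidt endpoint together with the unchanged $\mathcal{C}^\infty$ endpoint through Stein's complex interpolation produces the Schatten estimate with the desired Lorentz-in-time norm $L^{2p',2\alpha'}_t L^{2q'}_x$ on the right-hand side; the key ingredient is that the Lorentz scale $L^{p,r}$ is itself a complex interpolation space between two Lebesgue spaces, so Stein's method transports the Lorentz refinement through. A direct computation along the segment $(A,F)$ (using $2s=d-(\frac{2}{p}+\frac{d}{q}) = 0$ and $\alpha = \frac{2q}{q+1}$) confirms that the first and second Lorentz indices that emerge from the interpolation are exactly $2p'$ and $2\alpha'$, so the exponents match. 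Applying the converse direction of Proposition \ref{p:dualityprinciple} then delivers the claimed orthonormal estimate.

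The main technical obstacle will be the careful book-keeping of Lorentz indices through complex interpolation on mixed-norm spaces, which is known to be subtle in general; however, the specific analytic family in the Frank--Sabin argument only inserts fractional powers acting in the temporal variable, so the interpolation effectively mixes only the $t$-component and the Lorentz refinement on $t$ passes through by the scalar Calderón theory of Lorentz spaces as a complex interpolation scale.
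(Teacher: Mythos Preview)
Your approach is essentially identical to the paper's: duality via Proposition~\ref{p:dualityprinciple}, the same analytic family $T_z$ from \cite{frank-sabin-1}, O'Neil's Lorentz refinement of HLS at the $\mathcal{C}^2$ endpoint, and Stein interpolation. Two minor corrections to your bookkeeping: at the Hilbert--Schmidt endpoint the spatial norm is $L^2_x$ (not $L^{2q'}_x$---the $2q'$ appears only after interpolation with the $L^\infty_x$ endpoint), and the $\mathcal{C}^\infty$ bound at ${\rm Re}(z)=0$ follows from Plancherel (bounded Fourier multiplier) rather than from classical Strichartz.
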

\begin{proof}
As we have already indicated, we can prove Proposition \ref{p:s=0Lorentz} by following very closely the proof of Theorem \ref{t:s=0} in \cite{frank-sabin-1} and making one small adjustment using the Lorentz refinement  of the Hardy--Littlewood--Sobolev inequality of O'Neil \cite{ONeil}. In one dimension, this states that
\begin{equation} \label{e:LorentzHLS}
\bigg|\int_{\mathbb{R}} \int_{\mathbb{R}} \frac{g_1(t_1)g_2(t_2)}{|t_1-t_2|^\sigma} \,\mathrm{d}t_1\mathrm{d}t_2\bigg| \lesssim \|g_1\|_{L^{p_1,r_1}}\|g_2\|_{L^{p_2,r_2}}
\end{equation}
where $\sigma \in (0,1)$, $p_j \in (1,\infty)$ are such that $\frac{1}{p_1} + \frac{1}{p_2} + \beta = 2$, and $\frac{1}{r_1} + \frac{1}{r_2} \geq 1$. Given the similarity to the argument in \cite{frank-sabin-1}, we omit some details and point the reader to \cite{frank-sabin-1} for full details.

By Proposition \ref{p:dualityprinciple}, the desired estimate holds if and only if 
\begin{equation}\label{e:s=0Lorentzdual}
\|WUU^*\overline{W}\|_{\mathcal{C}^{\alpha'}} \lesssim \|W\|_{L^{2p',2\alpha'}_tL^{2q'}_x}^2
\end{equation}
for any $W\in L^{2p',2\alpha'}_tL^{2q'}_x$, where $Uf(x,t) = e^{it\Delta}f(x)$. To obtain \eqref{e:s=0Lorentzdual}, we consider the family of operators 
$T_z$ defined by 
\[
\widehat{T_z\phi}(\tau,\xi)
= \frac{1}{\Gamma(z+1)}
(\tau-|\xi|^2)^z_+ \widehat{\phi}(\tau,\xi),
\]
for $(\tau,\xi)\in\mathbb{R}\times\mathbb{R}^d$ and $\frac{d+1}{2}<-{\rm Re}(z)<\frac{d+2}{2}$; note that $T_{-1}=UU^*$. For this family of operators, by using \eqref{e:LorentzHLS} we obtain 
\[
\left\|
W_1T_{z}W_2
\right\|_{\mathcal{C}^2}^2
\leq
C({\rm Im}(z))
\|
W_1
\|_{L^{\widetilde{p},4}_tL^{2}_x}^2
\|
W_2
\|_{L^{\widetilde{p},4}_tL^{2}_x}^2,
\]
where $\frac{2}{\widetilde{p}}=-{\rm Re}(z)-\frac{d}{2}$ and $\frac{d+1}{2}<-{\rm Re}(z)<\frac{d+2}{2}$. Here, $C({\rm Im}(z))$ is a constant which grows exponentially with ${\rm Im}(z)$.

On the other hand, if ${\rm Re}(z)=0$, then from Plancherel's theorem it follows that 
\[
\left\|
W_1T_{z}W_2
\right\|_{\mathcal{C}^\infty}
\leq
C({\rm Im}(z))
\|
W_1
\|_{L^{\infty}_tL^{\infty}_x}
\|
W_2
\|_{L^{\infty}_tL^{\infty}_x}.
\] 
By complex interpolation, we obtain the desired inequality \eqref{e:s=0Lorentzdual} with $\alpha=\alpha^*(p,q)$.
\end{proof}

\subsection*{Restricted weak-type to restricted strong-type}

\begin{proposition} \label{p:afterLorentzFS}
Suppose $d \geq 1$ and let $(\frac{1}{q},\frac{1}{p})$ belong to ${\rm int}\,OFA$. If $2s=d-(\frac{2}{p}+\frac{d}{q})$ and $\alpha = \alpha^*(p,q)$, then 
\[
\bigg\|\sum_j\lambda_j |e^{it\Delta}f_j|^2\bigg\|_{L^{p}_tL^{q}_x} \lesssim \|\lambda\|_{\ell^{\alpha,1}}
\]
holds for all orthonormal sequences $(f_j)_j$ in $\dot{H}^s(\mathbb{R}^d)$ and all sequences 
$
\lambda=(\lambda_j)_j$ in $\ell^{\alpha,1}(\mathbb{C}).
$
\end{proposition}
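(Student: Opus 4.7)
The plan is to derive this restricted strong-type estimate by real interpolation between the restricted weak-type bound from Proposition \ref{p:afterBtrick}, which is available throughout $\mathrm{int}\,OAB$ (and in particular at the target point), and the Lorentz strong-type refinement from Proposition \ref{p:s=0Lorentz}, which holds on the boundary segment $(A,F)$ at regularity $s=0$ and provides the crucial extra margin needed to upgrade a weak endpoint to a strong one.

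Given a target $X = (1/q, 1/p) \in \mathrm{int}\,OFA$ with scaling parameter $s$ and $\alpha = \alpha^*(p,q)$, I would pick auxiliary points $X_0 = (1/q_0, 1/p_0) \in \mathrm{int}\,OAB$ (at some scaling $s_0 > 0$) and $X_1 = (1/q_1, 1/p_1) \in (A,F)$ (at $s_1 = 0$) so that $X = (1-\theta)X_0 + \theta X_1$ for some $\theta \in (0,1)$. Because $1/\alpha^*(p,q) = \tfrac{1}{d}(1/p + d/q)$ is affine in $(1/q, 1/p)$, the interpolated value of $\alpha^*$ coincides with the one dictated by our target, and the sequence-space interpolation \eqref{e:realinterseq} produces the correct $\alpha$. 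After the substitution $g_j = |D|^{s_\bullet} f_j$ putting the orthonormality conditions on a common $L^2$-footing, the two endpoint estimates become mapping properties of the operators $T_\tau \lambda = \sum_j \lambda_j \bigl| |D|^{-\tau} e^{it\Delta} g_j \bigr|^2$ at $\tau = s_0$ and $\tau = 0$ respectively.

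To interpolate across these different operators, I would invoke the duality principle of Proposition \ref{p:dualityprinciple} to recast each estimate as a Schatten-class bound on $W U_\tau U_\tau^* \overline{W}$, where $U_\tau f = e^{it\Delta} |D|^{-\tau} f$. This family of operators depends analytically on $\tau$ through the multiplier $|D|^{-2\tau}$, so a Stein-type analytic interpolation produces a Schatten bound at the intermediate parameter $\tau = s$. Dualising back through Proposition \ref{p:dualityprinciple}, and combining with the mixed-norm real interpolation formula \eqref{e:realintermixed} and the sequence-space formula \eqref{e:realinterseq}, yields a mapping $\ell^{\alpha, r} \to L^p_t L^{q, p}_x$ at $X$ for some $r \geq 1$. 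Finally, two embeddings close the gap: the geometric fact that $p \leq q$ throughout $\mathrm{int}\,OFA$ (this region sits on the $A$-side of the diagonal $OF$) delivers $L^{q,p}_x \hookrightarrow L^q_x$ on the spatial side, and the standard embedding $\ell^{\alpha,1} \hookrightarrow \ell^{\alpha,r}$ handles the sequence side, producing the claimed estimate $\ell^{\alpha,1} \to L^p_t L^q_x$.

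The principal obstacle is that the two endpoint estimates concern distinct linear operators because of the different regularity parameters $s_0$ and $0$, so a naive real interpolation of a single operator is not directly applicable. The resolution lies in the Schatten dual picture, where the dependence on the regularity parameter becomes holomorphic through $|D|^{-2\tau}$; the resulting analytic family then accommodates a Stein-type interpolation that brings the two endpoint estimates onto a common parametrised footing, after which the mixed-norm real interpolation step together with the geometric constraint $p \leq q$ in $\mathrm{int}\,OFA$ finishes the argument.
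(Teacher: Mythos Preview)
Your plan assembles the right ingredients (Proposition \ref{p:afterBtrick} at an interior point and Proposition \ref{p:s=0Lorentz} on $(A,F)$), but the interpolation scheme as described is incoherent. Stein's analytic interpolation is a \emph{complex} method: applied to the family $\tau \mapsto W U_\tau U_\tau^* \overline{W}$ it outputs a single estimate at $\tau = s$ with the \emph{complex} interpolants of the endpoint function and Schatten spaces. There is then nothing left to feed into the \emph{real} interpolation formula \eqref{e:realintermixed}; and in any case that formula requires Lebesgue endpoints $L^{p_i}_t L^{q_i}_x$, whereas your interior input from Proposition \ref{p:afterBtrick} only lives in $L^{p_0,\infty}_t L^{q_0}_x$. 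So the step ``Stein interpolation, then \eqref{e:realintermixed}'' does not parse, and the claimed output $L^p_t L^{q,p}_x$ has no justification.

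The paper's argument is both simpler and avoids this trap: it uses complex interpolation directly (which absorbs the analytic dependence on $s$ automatically). The key point you are missing is a careful \emph{geometric choice} of the endpoints. The paper places the $(A,F)$ endpoint $(\tfrac{1}{q_0},\tfrac{1}{p_0})$ on the ray from $O$ through the target, then selects $\theta$ and the interior endpoint so that the complex interpolant of the time spaces $L^{p_0,\alpha_0}_t$ and $L^{p_1,\infty}_t$ comes out with second Lorentz index exactly $p$; since both spatial norms are genuine $L^{q_i}_x$, the output is directly $L^p_t L^q_x$ with no embedding needed. Your appeal to $p \leq q$ and $L^{q,p}_x \hookrightarrow L^q_x$ is in fact the mechanism of the paper's \emph{next} step (upgrading restricted strong-type to strong-type by real-interpolating two instances of this very proposition at the same $s$), not of the present one.
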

\begin{proof}
We fix $(\frac{1}{q},\frac{1}{p})$ in ${\rm int}\,OFA$ and let $(\frac{1}{q_0},\frac{1}{p_0})$ be the intersection point of the line through the origin and $(\frac{1}{q},\frac{1}{p})$ with the line segment $(A,F)$. Next we define $\varepsilon_0 = \frac{1}{2}(\frac{d+1}{q_0} - (d-1)) \in (0,\frac{1}{d+2})$ and
$
\theta = 1 - \frac{1}{p}(\frac{1}{p_0} + \varepsilon_0)^{-1} \in (0,1).
$
Finally, we define $(\frac{1}{p_1},\frac{1}{q_1})$ by $\frac{1}{p} = \frac{1-\theta}{p_0} + \frac{\theta}{p_1}$ and $\frac{1}{q} = \frac{1-\theta}{q_0} + \frac{\theta}{q_1}$. 

One can check that $(\frac{1}{p_1},\frac{1}{q_1})$ belongs to ${\rm int}\,OAB$, and therefore an application of Proposition \ref{p:afterBtrick} gives $\mathcal{O}^{s_1}((p_1,\infty),q_1;(\alpha_1,1))$, where $\alpha_1 =\alpha^*(p_1,q_1)$ and $2s_1 = d-(\frac{2}{p_1}+\frac{d}{q_1})$. Also, we have $\mathcal{O}^0((p_0,r_0),q_0;\alpha_0)$, where $\alpha_0 = r_0 = \alpha^*(p_0,q_0)$ and $\frac{2}{p_0}+\frac{d}{q_0} = d$, by Proposition \ref{p:s=0Lorentz}. We claim that using complex interpolation between these two estimates with $\theta$ above gives the estimate $\mathcal{O}^s(p,q;(\alpha^*(p,q),\beta))$, for some $\beta \geq 1$ and where $2s = d-(\frac{2}{p}+\frac{d}{q})$ (hence slightly stronger than the desired estimate). Indeed, it is clear that $2s = 2s_1\theta = d - (\frac{2}{p} + \frac{d}{q})$ and $\alpha$ given by $\frac{1}{\alpha} = \frac{1-\theta}{\alpha_0} + \frac{\theta}{\alpha_1}$ coincides with $\alpha^*(p,q)$. Finally, a computation shows that if $r$ is given by $\frac{1}{r} = \frac{1-\theta}{r_0}$, then $r = p$.
\end{proof}

\subsection*{Strong type estimates}
At this final step of the proof, we use real interpolation to upgrade the restricted strong-type estimates in Proposition \ref{p:afterLorentzFS} to the desired strong-type estimates $\mathcal{O}^s(p,q;\alpha^*(p,q))$. 

\begin{proof}[Proof of Theorem \ref{t:smooth}: Sufficiency]
Firstly, we observe that it is sufficient to show the desired estimate $\mathcal{O}^s(p,q;\alpha^*(p,q))$ for all $(\frac{1}{q},\frac{1}{p})$ belonging to ${\rm int}\,OFA$. Indeed, once this is established, we may employ complex interpolation once again with the estimates in Theorem \ref{t:s=0} on the line segment $[B,A)$ to obtain $\mathcal{O}^s(p,q;\alpha^*(p,q))$ for all $(\frac{1}{q},\frac{1}{p})$ belonging to ${\rm int}\,OAB$.

As we shall soon see, the advantage of first considering the region $OFA$ is that 
\begin{equation}\label{e:lucky}
\alpha^*(p,q)\leq p\leq q
\end{equation}
whenever $(\frac{1}{q},\frac{1}{p})$ belongs to $OFA$. Indeed, one easily sees that $\alpha^*(p,q) \leq p$ is equivalent to $\frac{1}{p} \leq \frac{d}{(d-1)q}$ which means below the line $[O,A]$ (and $p \leq q$ obviously holds in $OFA$ since $F$ lies on the diagonal $\frac{1}{q} = \frac{1}{p}$).

Since we wish to use real interpolation, we fix $s\in(0,\frac{d}{2})$ and take any two points $(\frac{1}{q_i},\frac{1}{p_i})$ from ${\rm int}\,OFA$ such that $\frac{2}{p_i}+\frac{d}{q_i} = d - 2s$ for $i=0,1$. From Proposition \ref{p:afterLorentzFS} we know that $\mathcal{O}^s(p_i,q_i;(\alpha^*(p_i,q_i),1))$ holds for $i = 0,1$. This means that if we fix an orthonormal system $(f_j)_j$ in the common space $\dot{H}^s$, then real interpolation, \eqref{e:realintermixed} and \eqref{e:realinterseq} yield
\begin{equation} \label{e:afterrealiner}
\bigg\|\sum_j\lambda_j|e^{it\Delta}f_j|^2 \bigg\|_{L^{p}_tL^{q,p}_x} \lesssim \|\lambda\|_{\ell^{\alpha^*(p,q),p}}
\end{equation}
with $\frac{1}{p}=\frac{1-\theta}{p_0}+\frac{\theta}{p_1}$, $\frac{1}{q}=\frac{1-\theta}{q_0}+\frac{\theta}{q_1}$ and any $\theta\in(0,1)$; that is, \eqref{e:afterrealiner} holds for all $(\frac{1}{q},\frac{1}{p})$ belonging to ${\rm int}\,OFA$. Thanks to \eqref{e:lucky}, we may deduce from the nesting of Lorentz spaces that
\begin{equation*}
\bigg\|\sum_j\lambda_j|e^{it\Delta}f_j|^2 \bigg\|_{L^{p}_tL^{q}_x} \lesssim \|\lambda\|_{\ell^{\alpha^*(p,q)}}
\end{equation*}
and therefore $\mathcal{O}^s(p,q;\alpha^*(p,q))$ holds for all $(\frac{1}{q},\frac{1}{p})$ belonging to ${\rm int}\,OFA$ with $2s = \frac{2}{p}+\frac{d}{q}$, as claimed.
\end{proof}

\subsection{Necessity} 
Here, we explicitly construct two types of orthonormal systems $(f_j)_j$ to prove the necessity claims in Theorem \ref{t:smooth}; i.e. if $\mathcal{O}^s(p,q;\alpha)$ holds, then necessarily $\alpha \leq \alpha^*(p,q)$ and $\alpha \leq p$.

\subsection*{Necessity of $\alpha \leq \alpha^*(p,q)$}
Let $R\gg 1$, $\chi\in C_c^\infty(B(0,\frac12))$, and $v\in  R^{-1} \mathbb Z^d\cap    [2,4)^d$.
For each such $v$, define $f_v$ by 
\[ \widehat{f_v}(\xi) = \chi_v(\xi)=R^\frac d2 \chi(R(\xi-v)). \] 
A simple computation shows that 
\begin{equation}
\label{e:packet}  |e^{it\Delta} f_v(x)|\gtrsim  R^{-\frac d2} \chi_{T_v}(x,t),  
\end{equation}
with the implicit constant independent of $v, R$, where $T_v$ is the tube given by
\[ 
T_v=\{(x,t): |x-2tv| \le cR, \,\, |t|\le cR^2  \} 
\] 
and $c$ is a sufficiently small number.  In fact, by a change of variables we see that 
\[  
|e^{it\Delta}f_v(x)| = CR^{-\frac{d}{2}} \bigg| \int_{\mathbb{R}^d} e^{iR^{-1}\xi\cdot(x-2tv)} e^{it|R^{-1}\xi|^2}\chi(\xi)\,\mathrm{d}\xi \bigg|.
\] 
Since $R^{-1}\xi\cdot(x-2tv) = O(c)$ and $t|R^{-1}\xi|^2=O(c)$ if $(x,t) \in T_v$ and $|\xi| \leq \frac{1}{2}$, we see that 
$|e^{it\Delta} f_v(x)|\sim R^{-d/2}$ for such $(x,t)$; hence, \eqref{e:packet} follows.  

Then, since $(f_v)_v$ have disjoint Fourier supports and $\|f_v\|_{\dot{H}^s}\sim1$, if $\mathcal{O}^s(p,q;\alpha)$ holds then  
\[   
R^{- d}\bigg\|  \sum_{v}  \chi_{T_v} \bigg\|_{L^p_t(-cR^2,cR^2); L^q_x(B(0, cR)) } \lesssim  \bigg(\sum_{v} 1 \bigg)^\frac1\alpha \sim R^{\frac d\alpha}.    
\] 
Note that all ${T_v}$ contain $(-\tilde cR,cR)\times B(0, \tilde cR)$ with a small enough choice of $\tilde c$.  Hence, the above gives $R^{\frac 1p+\frac dq}\lesssim R^\frac d\alpha$.   Letting $R\to \infty$ we obtain the necessary condition $\frac1p+\frac dq\le \frac d\alpha$; i.e. $\alpha\leq \alpha^*(p,q)$. \qed

\subsection*{Necessity of $\alpha \leq p$}
Fix the function $g$ given by
\[
\widehat{g}(\xi) = C_0\frac{\chi_{[\sqrt{2\pi},\sqrt{4\pi}]}(|\xi|)}{|\xi|^\sigma}
\]
where $\sigma = \frac{d-2+2s}{2}$ and the constant $C_0$ is such that $C_0^2 = \frac{|\mathbb{S}^{d-1}|}{2(2\pi)^{d-1}}$. Then we define $f_j = e^{ij\Delta}g$. A straightforward calculation using polar coordinates reveals that
\[
(f_j,f_k)_{\dot{H}^s} = \frac{1}{(2\pi)^d }\int_{\mathbb{R}^d} |\widehat{g}(\xi)|^2 e^{-i(j-k)|\xi|^2}|\xi|^{2s} \, \mathrm{d}\xi = \delta_{jk}
\]
so that $(f_j)_j$ forms an orthonormal system in $\dot{H}^s(\mathbb{R}^d)$.

For any $\varepsilon \in (0,1)$ and nonnegative sequence $\lambda$, we observe that 
\begin{align*}
\bigg\|\sum_j\lambda_j|e^{it\Delta}f_j|^2\bigg\|_{L^p_tL^q_x}^p
&\geq
\sum_{n\in\mathbb{Z}}\int_n^{n+\varepsilon}\bigg(\int_{\mathbb{R}^d}\bigg[\sum_j\lambda_j|e^{i(t-j)\Delta}g(x)|^2 \, \mathrm{d}x \bigg]^q\bigg)^\frac pq \mathrm{d}t\\
&\geq \sum_{n\in\mathbb{Z}}\lambda_n^p\int_n^{n+\varepsilon}\|e^{i(t-n)\Delta}g\|_{L^{2q}_x}^{2p}\,\mathrm{d}t
\end{align*}
and so, by choosing $\varepsilon > 0$ suitably small (depending on $d$, $s$ and $q$) it follows that
\begin{align*}
\bigg\|\sum_j\lambda_j|e^{it\Delta}f_j|^2\bigg\|_{p,q}^p \gtrsim \|\lambda\|_{\ell^{p}}^p.
\end{align*}
Thus, if we assume that $\mathcal{O}^s(p,q;\alpha)$ holds, then the above example leads to $\|\lambda\|_{\ell^{p}} \lesssim \|\lambda\|_{\ell^{\alpha}}$ for all nonnegative sequences $\lambda$; hence $\alpha \leq p$, as claimed. \qed


Finally, we return to the claimed necessary conditions in Theorem \ref{t:localmain}. If $|\phi(\xi)| \gtrsim 1$ on some annulus, it is clear that  slight modification of  two systems of initial data used above  generates the same necessary  conditions (with re-scaling to align the support with the annulus where $\phi$ is bounded below) since these systems have frequency supports in some fixed annuli.  However, the same systems (especially the second one)  does not generally work  if  we no longer have $|\phi(\xi)| \gtrsim 1$ on a certain  annulus. Nevertheless, this can be simply overcome by using translation in the frequency side. In fact, since $\phi$ is nontrivial, there is a $v\in \mathbb R^d$ with $\phi(v)\neq 0$. The transformation $\xi\to \xi+v$ changes
\[ \phi\to \widetilde \phi= \phi(\cdot+v), \   f_j\to \widetilde f_j= e^{-iv\cdot  x}f_j,  
\  |e^{it\Delta}P f_j(x) |\to  |e^{it\Delta}\widetilde P \widetilde f_j(x-2vt)|.\] 
Here, $\widetilde P$ is the projection operator given by $\widetilde \phi$.  It follows that 
$\|\sum_j\lambda_j|e^{it\Delta}Pf_j|^2\|_{L^p_tL^q_x}$ $=$ $\|\sum_j\lambda_j|e^{it\Delta}\widetilde P \widetilde f_j|^2\|_{L^p_tL^q_x}$ 
while $(\widetilde f_j)_j$ remains orthonormal if so is  $(f_j)_j$.  Now,  since $|\widetilde \phi|\sim 1$ on a ball (and on an annulus),  we can use the previous two systems of initial data  to get the necessary  conditions in   Theorem \ref{t:localmain}.

\section{The critical line $[O,A]$ and Conjecture \ref{conj:FLLS}} \label{section:OA}

In this section, we prove two negative results. The first is that, without the frequency localisation, the strong-type estimates $\mathcal{O}^s(p,q;p)$ fail on $[O,A]$; in fact, we prove the stronger statement that $\mathcal{O}^s((p,\infty),(q,\infty);(p,r))$ fails \emph{for all} $r > 1$ when $(\frac{1}{q},\frac{1}{p})$ belongs to $[O,A]$. This observation draws attention to the conjectured estimate $\mathcal{O}^s(p,q;(p,1))$ in Conjecture \ref{conj:FLLS} where $(p,q) = (\frac{d+1}{d},\frac{d+1}{d-1})$ and $r=1$. Our second negative result is to prove that this conjecture fails when $d=1$; currently, we do not have an indication of whether the conjecture is true or not for $d \geq 2$ and we believe this is a very interesting open problem.

\subsection{Semi-classical limiting argument} In order to prove the negative results on $[O,A]$, we show that certain induced estimates for the velocity average of the kinetic transport equation fail on $[O,A]$. This requires us to first make the following observation based on a semi-classical limiting argument.
\begin{proposition} \label{p:semiclassical}
Let $p,q\in[1,\infty]$, $r,\widetilde{r},\beta\in[1,\infty)$ and $s\in{[0,\frac{d}{2})}$ be such that $2s = d - (\frac{2}{p} + \frac{d}{q})$. If $\alpha = \alpha^*(p,q)$ and
\begin{equation}\label{e:weakRHS}
\bigg\|\sum_j\lambda_j|e^{it\Delta}f_j|^2\bigg\|_{L^{p,r}_tL^{q,\widetilde{r}}_x}
\lesssim \|\lambda\|_{\ell^{\alpha,\beta}}
\end{equation}
holds for all orthonormal systems $(f_j)_j$ in $\dot{H}^s(\mathbb{R}^d)$ and all sequences 
$
\lambda=(\lambda_j)_j$ in $\ell^{\alpha,\beta}(\mathbb{C}),
$
then 
\begin{equation}\label{e:weakRHS_KT}
\bigg\|\int_{\mathbb{R}^d}f(x-tv,v) \,\frac{\mathrm{d}v}{|v|^{2s}}\bigg\|_{L^{p,r}_tL^{q,\widetilde{r}}_x}\lesssim \|f\|_{L^{\alpha,\beta}}
\end{equation}
whenever $f \in L^{\alpha,\beta}$.
\end{proposition}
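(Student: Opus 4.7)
The approach is a semi-classical limiting argument that realizes the LHS of \eqref{e:weakRHS_KT} as the $\hbar\to 0$ limit of the spatial density of a Schr\"odinger-evolved Toeplitz operator on $L^2$; this reverses the direction of the familiar derivation of orthonormal Strichartz estimates from kinetic-transport Strichartz inequalities (see Frank--Sabin, Lewin--Sabin). By density it suffices to establish \eqref{e:weakRHS_KT} for $f\in\mathcal{S}(\mathbb{R}^{2d})$ nonnegative and compactly supported in $\mathbb{R}^d\times\{v\in\mathbb{R}^d:|v|\geq\varepsilon\}$.

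I would fix $\phi\in\mathcal{S}(\mathbb{R}^d)$ with $\|\phi\|_{L^2}=1$ and, for $\hbar>0$, introduce the coherent states $\chi^\hbar_{y,v}(x):=\hbar^{-d/4}\phi(\hbar^{-1/2}(x-y))e^{ix\cdot v/\hbar}$; these are $L^2$-normalized with $\|\chi^\hbar_{y,v}\|_{\dot{H}^s}\sim(|v|/\hbar)^s$, and a stationary-phase computation shows that, after the time rescaling $t\mapsto \hbar t/2$, the densities $|e^{it\Delta}\chi^\hbar_{y,v}|^2$ converge weakly (as measures on $\mathbb{R}^d$) to $\delta_{y+tv}$ as $\hbar\to 0$. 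Setting $h^\hbar_{y,v}:=(|v|/\hbar)^{-s}|D|^{s}\chi^\hbar_{y,v}$, an essentially $L^2$-normalized family, I would form the Toeplitz operator
\[
\Gamma^\hbar := \hbar^{-2s}\iint f(y,v)\,|h^\hbar_{y,v}\rangle\langle h^\hbar_{y,v}|\,dy\,dv
\]
on $L^2$, which is positive and trace-class with spectral decomposition $\Gamma^\hbar = \sum_j\lambda_j^\hbar|e_j^\hbar\rangle\langle e_j^\hbar|$ for some $L^2$-orthonormal system $(e_j^\hbar)_j$. The functions $f_j^\hbar:=|D|^{-s}e_j^\hbar$ then form an orthonormal system in $\dot{H}^s$, and applying the hypothesis produces the $\hbar$-uniform bound
\[
\bigg\|\sum_j\lambda_j^\hbar\,|e^{it\Delta}f_j^\hbar|^2\bigg\|_{L^{p,r}_tL^{q,\widetilde{r}}_x}\lesssim \|(\lambda_j^\hbar)_j\|_{\ell^{\alpha,\beta}}.
\]

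To conclude I would let $\hbar\to 0$. A direct calculation shows that the LHS above equals the spatial density of $e^{it\Delta}|D|^{-s}\Gamma^\hbar|D|^{-s}e^{-it\Delta}$, namely $\iint f(y,v)|v|^{-2s}|e^{it\Delta}\chi^\hbar_{y,v}(x)|^2\,dy\,dv$; after the time rescaling and using the wave-packet concentration together with the scaling relation $2s=d-\frac{2}{p}-\frac{d}{q}$ to cancel the accumulated $\hbar$-powers, this converges in $L^{p,r}_tL^{q,\widetilde{r}}_x$ to $\int f(x-tv,v)|v|^{-2s}\,dv$. On the RHS, the Toeplitz eigenvalue sequence satisfies a semi-classical Weyl asymptotic $\|(\lambda_j^\hbar)_j\|_{\ell^{\alpha,\beta}}\to \|f\|_{L^{\alpha,\beta}(\mathbb{R}^{2d})}$; for $\beta=\alpha$ this is the standard Lieb--Thirring/Weyl asymptotic $\sum_j(\lambda_j^\hbar)^\alpha\sim(2\pi\hbar)^{-d}\iint(\hbar^{-2s}f)^\alpha\,dy\,dv$, and the extension to general Lorentz exponent $\beta$ follows by a layer-cake decomposition combined with min--max characterizations of Schatten--Lorentz norms. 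The principal technical obstacle is precisely this Lorentz-type spectral convergence, since the interplay between eigenvalue rearrangement and the semiclassics of Toeplitz operators is delicate; it would be handled by approximating $f$ by simple functions supported on its superlevel sets and invoking the $\ell^\alpha$-case on each.
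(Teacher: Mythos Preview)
Your quantization scheme is anti-Wick/Toeplitz rather than the Weyl quantization used in the paper; either works in principle, and the overall structure --- test the density form of the hypothesis on a semi-classical operator built from $f$, rescale time, and pass to the limit --- is the same.  There are, however, two concrete problems with your write-up.

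First, the scaling bookkeeping is off.  With your normalization neither side converges to a nonzero limit: after $t\mapsto \hbar t/2$ the density norm is $\sim\hbar^{1/p}\cdot\|\int f(x-tv,v)|v|^{-2s}\,\mathrm{d}v\|$, and on the other side $\|\lambda^\hbar\|_{\ell^\alpha}\sim \hbar^{d-2s-d/\alpha}\|f\|_{L^\alpha}=\hbar^{1/p}\|f\|_{L^\alpha}$ (your displayed Weyl formula $\sum_j(\lambda_j^\hbar)^\alpha\sim(2\pi\hbar)^{-d}\int(\hbar^{-2s}f)^\alpha$ is missing the factor $(2\pi\hbar)^{d\alpha}$ coming from the resolution of identity $\int|\chi^\hbar_{y,v}\rangle\langle\chi^\hbar_{y,v}|\,\mathrm{d}y\,\mathrm{d}v=(2\pi\hbar)^dI$).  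Both sides tend to zero at the same rate and it is the ratio that survives; the two convergence statements you wrote are not correct as stated.

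Second, and more seriously, your treatment of the right-hand side is both mis-scaled and harder than necessary.  You aim for a sharp Schatten--Lorentz Weyl asymptotic and propose to get it by layer-cake plus min--max; this is delicate, since superlevel-set truncations of $f$ do not produce spectral truncations of $\Gamma^\hbar$, and you do not actually carry it out.  The point you are missing is that only the \emph{upper bound}
\[
\|\Gamma^\hbar\|_{\mathcal{C}^{\alpha,\beta}}\ \lesssim\ \hbar^{\,d-2s-d/\alpha}\,\|f\|_{L^{\alpha,\beta}}
\]
is needed, and this follows immediately by real interpolation of the positive linear map $f\mapsto\Gamma^\hbar$ between the elementary endpoints $\|\Gamma^\hbar\|_{\mathcal{C}^1}\lesssim\hbar^{-2s}\|f\|_{L^1}$ (trace) and $\|\Gamma^\hbar\|_{\mathcal{C}^\infty}\lesssim\hbar^{d-2s}\|f\|_{L^\infty}$ (resolution of identity).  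This is precisely how the paper handles the step: for the Weyl quantization $\gamma_0(f;h)$ they quote endpoint Schatten bounds (from Amour--Khodja--Nourrigat) and interpolate in classical Sobolev/Lorentz scales to obtain
\[
\|\gamma_0\|_{\mathcal{C}^{\alpha,\beta}}\ \lesssim\ h^{\,d-d/\alpha}\!\!\sum_{0\le|n|+|m|\le 2d+2}\!\! h^{(|n|+|m|)/2}\,\|\partial_x^n\partial_v^m f\|_{L^{\alpha,\beta}},
\]
so that only the $n=m=0$ term survives in the limit.  No spectral asymptotic is ever invoked.  If you replace your asymptotic claim by this interpolation argument (which for Toeplitz operators is in fact cleaner than for Weyl), your approach goes through.
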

Before giving a proof of Proposition \ref{p:semiclassical}, we would like to make some remarks. The function $F(x,v,t) = f(x-tv,v)$ satisfies the kinetic transport equation
\[
(\partial_t  + v \cdot \nabla_x)F(x,v,t) = 0, \qquad F(x,v,0) = f(x,v)
\]
for $(x,v,t) \in \mathbb{R}^d \times \mathbb{R}^d \times \mathbb{R}$, and 
\begin{equation} \label{e:rhodefn}
\rho f(x,t) = \int_{\mathbb{R}^d}f(x-tv,v) \,\mathrm{d}v
\end{equation}
is the velocity average of the solution. Estimates of the form \eqref{e:weakRHS_KT} are typically referred to as Strichartz estimates for the kinetic transport equation; the classical case is $s=0$ and $r = \alpha^*(p,q)$, in which case it is known that \eqref{e:weakRHS_KT} holds if and only if $(\frac{1}{q},\frac{1}{p})$ belongs to $[B,A)$. The positive results were obtained in \cite{CastellaPerthame} and \cite{KeelTao}, and the failure at the endpoint $A$ was shown in \cite{BBGL_CPDE} for all $d \geq 1$. The argument establishing failure at the endpoint $A$ profitably used duality and it was shown that
\begin{equation} \label{e:s=0dual}
\bigg\| \int_{\mathbb{R}} g(x+tv,t)\,\mathrm{d}t \bigg\|_{L^{d+1}} \lesssim \|g\|_{L^{d+1}_tL^{\frac{d+1}{2}}_x}
\end{equation}
fails by multiplying out the norm on the left-hand side and ultimately testing on smooth and rapidly decaying $g$ whose Fourier transform is non-zero at the origin. A simplification was given in \cite{BLNS} by showing the left-hand side of \eqref{e:s=0dual} is infinite on the centred gaussian $g(x,t) = e^{-\pi(t^2 + |x|^2)}$, and such an explicit argument did not rely on the fortuitous fact that Lebesgue exponent coincides with an integer.

We also note that the failure of \eqref{e:weakRHS_KT} at $A$ when $s=0$ and $r = \alpha^*(p,q)$ was shown when $d=1$ prior to \cite{BBGL_CPDE} by Guo--Peng \cite{GuoPeng} and Ovcharov \cite{Ovcharov_Endpoint}. The argument of Ovcharov used characteristic functions of Besicovitch (or Kakeya) sets; these are sets containing a unit line segment in all possible directions and a famous argument of Besicovitch generates such sets with arbitrarily small measure; this argument is particularly relevant to the present discussion and will be used to disprove Conjecture \ref{conj:FLLS} when $d=1$.

The connection between solutions of the free Schr\"odinger equation and the kinetic transport equation is well documented and proceeds by a semi-classical limiting argument. Thus, we are not viewing Proposition \ref{p:semiclassical} as particularly novel and we present its statement and proof below for completeness and since we were not able to find elsewhere in the literature the statement in the form that we need. Sabin presented the special case $r = \alpha^*$ in Lemma 9 of \cite{sabin} and we use a similar argument to extend his observations to the setting of Lorentz spaces.
\begin{proof}[Proof of Proposition \ref{p:semiclassical}] 
First we note that \eqref{e:weakRHS} implies
\begin{equation}\label{e:weakRHS'}
\|\rho_{\gamma(t)}(x)\|_{L^{p,r}_t L^{q,\widetilde{r}}_x} \lesssim \|\gamma_0\|_{\mathcal{C}^{\alpha,\beta}(L^2)}, 
\end{equation}
where $\gamma(t)=|D|^{-s}e^{it\Delta}\gamma_0 e^{-it\Delta} |D|^{-s}$. Next, we fix any $f$ in the Schwartz class $\mathcal{S}(\mathbb{R}^d\times\mathbb{R}^d)$ and test \eqref{e:weakRHS'} on the semi-classical Weyl quantisation $\gamma_0 = \gamma_0(f;h)$ of $f$, whose kernel is given by
\[
\gamma_0(x,x') = \int_{\mathbb{R}^d} f(\tfrac{x+x'}{2},v)e^{i\frac{(x-x')\cdot v}{h}}\, \mathrm{d}v.
\]
The parameter $h$ will later be sent to zero. The Fourier transform of $\gamma_0$ on $\mathbb{R}^d \times \mathbb{R}^d$ is given by
\begin{equation} \label{e:gammaFT}
\widehat{\gamma_0}(v,v') = (2\pi h)^d \mathcal{F}_x f(\cdot,\tfrac{h}{2}(v-v'))(v+v'),
\end{equation}
where $\mathcal{F}_x$ denotes the Fourier transform in the $x$ variable.

A direct computation, making use of \eqref{e:gammaFT}, reveals that if
\[
\widetilde{\gamma}(t) = h^{-2s} \gamma(ht),
\]
then
\begin{align*}
\rho_{\widetilde{\gamma}(t)}(x) & = \frac{h^{-2s}}{(2\pi)^{2d}} \int_{\mathbb{R}^{2d}} e^{-i\frac{th}{2}|v-v'|^2} e^{i\frac{th}{2}|v'|^2} |v-v'|^{-s}|v'|^{-s} \widehat{\gamma_0}(v-v',v') \, \mathrm{d}v' \, e^{ix\cdot v} \mathrm{d}v \\
& = \frac{h^{d-2s}}{(2\pi)^{d}} \int_{\mathbb{R}^{2d}} e^{-i\frac{th}{2}(v \cdot (v - 2v'))}  |v-v'|^{-s}|v'|^{-s} \mathcal{F}_x f(\cdot,\tfrac{h}{2}(v-2v'))(v) \, \mathrm{d}v' \, e^{ix\cdot v}\mathrm{d}v 
\end{align*}
and therefore, by a change of variables,
\begin{align*}
\rho_{\widetilde{\gamma}(t)}(x) = \frac{1}{(2\pi)^{d}} \int_{\mathbb{R}^{2d}} e^{-itv \cdot v''}  |v'' + \tfrac{h}{2}v|^{-s}|v'' - \tfrac{h}{2}v|^{-s} \mathcal{F}_x f(\cdot,v'')(v) \, \mathrm{d}v'' \, e^{ix\cdot v}\mathrm{d}v. 
\end{align*}
It follows that
\[
\|\rho_{\widetilde{\gamma}(t)}(x)\|_{L^{p,r}_t L^{q,\widetilde{r}}_x} \to \bigg\| \int_{\mathbb{R}^d} f(x-tv'',v'') |v''|^{-2s} \, \mathrm{d}v''\bigg\|_{L^{p,r}_t L^{q,\widetilde{r}}_x}
\]
as $h \to 0$, or equivalently,
\begin{equation} \label{e:LHSsemi}
h^{-(\frac{1}{p}+2s)}\|\rho_{\gamma(t)}(x)\|_{L^{p,r}_t L^{q,\widetilde{r}}_x} \to \bigg\| \int_{\mathbb{R}^d} f(x-tv'',v'') |v''|^{-2s} \, \mathrm{d}v''\bigg\|_{L^{p,r}_t L^{q,\widetilde{r}}_x}
\end{equation}
as $h \to 0$.

For the right-hand side, we observe that
\begin{equation} \label{e:RHSsemi}
\|\gamma_0\|_{\mathcal{C}^{\alpha,\beta}} \lesssim h^{d - \frac{d}{\alpha}} \sum_{0 \leq |n| + |m| \leq 2d + 2} h^{\frac{|n| + |m|}{2}} \|\partial_x^n \partial_v^m f\|_{L^{\alpha,\beta}_{x,v}}
\end{equation}
for $\alpha,\beta\in[1,\infty)$, which follows, for example, by using Propositions 2.1 and 2.2 in \cite{AKN} for the endpoint cases $\alpha = \beta = 1$ and $\alpha = \beta = \infty$, along with real interpolation in the classical Sobolev spaces (see \cite{DeVoreScherer}).

From \eqref{e:weakRHS'}, \eqref{e:LHSsemi} and \eqref{e:RHSsemi}, and using the assumption that $\alpha = \alpha^*(p,q)$ along with the scaling condition $2s = d - (\frac{2}{p} + \frac{d}{q})$, we obtain \eqref{e:weakRHS_KT} in the limit $h \to 0$.
\end{proof}

\subsection{On $[O,A]$}
\begin{proposition} \label{p:OAfail}
Suppose $d \geq 3$. Then 
\begin{equation} \label{e:OAr>1fail}
\bigg\|\int_{\mathbb{R}^d}f(x-tv,v) \,\frac{\mathrm{d}v}{|v|^{2s}}\bigg\|_{L^{p,\infty}_tL^{q,\infty}_x}\lesssim \|f\|_{L^{p,r}}
\end{equation}
fails whenever $(\frac{1}{q},\frac{1}{p})$ belongs to $[O,A]$, $\frac{2}{p} + \frac{d}{q} = d - 2s$, and for all $r > 1$. Consequently, if $(\frac{1}{q},\frac{1}{p})$ belongs to the line $[O,A]$ and $2s = d - (\frac{2}{p} + \frac{d}{q})$, then the estimate
\[
\bigg\|\sum_j\lambda_j|e^{it\Delta}f_j|^2\bigg\|_{L^{p,\infty}_tL^{q,\infty}_x} \lesssim \|\lambda\|_{\ell^{p,r}}
\]
for any orthonormal system $(f_j)_j$ in $\dot{H}^s(\mathbb{R}^d)$ and any sequence $\lambda=(\lambda_j)_j$ in $\ell^{p,r}(\mathbb{C})$ fails for all $r > 1$.
\end{proposition}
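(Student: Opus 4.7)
Since the orthonormal statement in the second half of the proposition is the image of the transport estimate under Proposition~\ref{p:semiclassical} (taking outer Lorentz exponent $r=\infty$, spatial Lorentz exponent $\widetilde r=\infty$, $\alpha=p$, $\beta=r$, and using $\alpha^*(p,q)=p$ throughout $[O,A]$), failure of the transport estimate immediately forces failure of the orthonormal one. My plan is therefore to concentrate all effort on disproving \eqref{e:OAr>1fail} for $r>1$.

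The approach is to dualise and then test on a centred Gaussian. An iterated H\"older inequality in Lorentz spaces gives the embedding $L^{p',1}_t L^{q',1}_x\hookrightarrow(L^{p,\infty}_t L^{q,\infty}_x)^*$, and $(L^{p,r})^*=L^{p',r'}$ holds since $1<p<\infty$ on $(O,A]$ and $r\in[1,\infty)$. Combined with the change of variables $y=x-tv$, which identifies the transport operator with the adjoint of $T^*g(y,v):=|v|^{-2s}\int_{\mathbb R}g(y+tv,t)\,\mathrm{d}t$, the bound \eqref{e:OAr>1fail} would yield
\[
\|T^*g\|_{L^{p',r'}_{y,v}}\lesssim\|g\|_{L^{p',1}_t L^{q',1}_x}
\]
with $r'=r/(r-1)<\infty$. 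The task thus reduces to producing a single $g$ for which the right-hand side is finite but $\|T^*g\|_{L^{p',r'}}=\infty$.

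Following the simplification used in \cite{BLNS} at the endpoint $A$, I would test on the centred Gaussian $g(x,t)=e^{-\pi(|x|^2+t^2)}$. Completion of the square in $t$ yields the explicit formula
\[
T^*g(y,v)=\frac{|v|^{-2s}}{\sqrt{1+|v|^2}}\exp\!\left(-\pi\left[\tfrac{|y_\parallel|^2}{1+|v|^2}+|y_\perp|^2\right]\right),
\]
where $y_\parallel$ and $y_\perp$ are the components of $y$ parallel and perpendicular to $v$. The norm $\|g\|_{L^{p',1}_t L^{q',1}_x}$ is manifestly finite. For $|v|\gtrsim 1$ the peak value of $T^*g$ in $y$ is $\sim|v|^{-(2s+1)}$ and the corresponding superlevel set in $y$ is an ellipsoid of Lebesgue measure $\sim|v|$; integrating over $|v|\lesssim R:=\lambda^{-1/(2s+1)}$ gives $|\{T^*g>\lambda\}|\sim R^{d+1}$. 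The crucial arithmetic is that on $[O,A]$ the relations $\frac{2}{p}+\frac{d}{q}=d-2s$ and $\frac{1}{p}=\frac{d}{(d-1)q}$ force $p=\frac{d+1}{d-2s}$, and hence $p'=\frac{d+1}{2s+1}$, so $R^{d+1}=\lambda^{-p'}$. Equivalently $t^{1/p'}(T^*g)^*(t)$ tends to a positive constant as $t\to\infty$, placing $T^*g$ on the very boundary of $L^{p',\infty}$. This forces $\|T^*g\|_{L^{p',r'}}=\infty$ for every $r'<\infty$, contradicting the dualised estimate.

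The main obstacle is verifying that the small-$|v|$ contribution, where the factor $|v|^{-2s}$ develops a singularity, does not upset this scaling. This reduces to bounding an integral of the form $\int_{|v|\le 1}|v|^{d-1}\big(\log(1/(\lambda|v|^{2s}))\big)^{d/2}\,\mathrm{d}|v|$, which is only $O\big((\log1/\lambda)^{d/2}\big)$ and hence subdominant to $\lambda^{-p'}$ as $\lambda\to 0$; the integrability near $|v|=0$ is ensured by $d\ge 3>2s$ so that $|v|^{d-1}\,\mathrm{d}|v|$ is integrable and absorbs any logarithmic blow-up of the $y$-ellipsoid.
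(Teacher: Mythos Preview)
Your approach is essentially identical to the paper's: reduce via Proposition~\ref{p:semiclassical}, dualise, test on the centred Gaussian $g(x,t)=e^{-\pi(|x|^2+t^2)}$, compute $T^*g$ by completing the square, and show the distribution function satisfies $|\{T^*g>\lambda\}|\sim\lambda^{-p'}$ as $\lambda\to 0$ using $p'=\tfrac{d+1}{2s+1}$ on $[O,A]$. The only cosmetic difference is that the paper handles the level-set computation by the single change of variables $v\mapsto\lambda^{-1/(2s+1)}v$ and then takes the limit of the resulting integral, whereas you split into $|v|\gtrsim1$ and $|v|\lesssim1$; both routes give the same conclusion.
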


\begin{proof}
Once we show that \eqref{e:OAr>1fail} fails, then from Proposition \ref{p:semiclassical} we may conclude that $\mathcal{O}^s((p,\infty),(q,\infty);(p,r))$ fails. To show \eqref{e:OAr>1fail} fails, we show that the dual estimate
\begin{equation*}
\bigg\|\frac{1}{|v|^{2s}}\int_{\mathbb{R}} g(x+tv,t) \, \mathrm{d}t\bigg\|_{L^{p',r'}}\lesssim \|g\|_{L_t^{p',1}L_x^{q',1}}
\end{equation*}
fails, for each $(\frac{1}{q},\frac{1}{p})$ belonging to $[O,A]$, $\frac{2}{p} + \frac{d}{q} = d - 2s$, and $r > 1$. For this, we consider $g(x,t)=e^{-\pi(t^2+|x|^2)}$. If we define
\[
G(x,v):=\frac{1}{|v|^{2s}}\int_{\mathbb{R}} e^{-\pi t^2} e^{-\pi|x+tv|^2} \, \mathrm{d}t,
\]
then a direct computation gives
\[
G(x,v) = \frac{e^{-\pi|x|^2}e^{\pi\frac{(x \cdot v)^2}{\langle v \rangle^2}}}{|v|^{2s}\langle v\rangle},
\]
where $\langle v \rangle := (1 + |v|^2)^{1/2}$. If we fix $v\in\mathbb{R}^d$, then by a rotation and change of variables it is easy to see
\begin{align*}
|\{x\in\mathbb{R}^d : |G(x,v)| \geq \lambda\} \sim \langle v\rangle \bigg(\log \frac{1}{\lambda|v|^{2s}\langle v\rangle}\bigg)_+^{\frac{d}{2}}.
\end{align*}
Thus, changing variables $v\mapsto \lambda^{-\frac{1}{2s+1}}v$, we obtain that 
\begin{align*}
|\{(x,v) \in \mathbb{R}^d \times \mathbb{R}^d : |G(x,v)|\geq \lambda\}| \sim \lambda^{-\frac{d+1}{2s+1}}\int_{\mathbb{R}^d}\langle v \rangle_\lambda \bigg(\log\frac{1}{ |v|^{2s}\langle v \rangle_\lambda}\bigg)_+^{\frac{d}{2}} \mathrm{d}v,
\end{align*}
where $\langle v \rangle_\lambda := (\lambda^{\frac{2}{2s+1}}+|v|^2)^{1/2}$. Since
$$
\lim_{\lambda \to 0+} \int_{\mathbb{R}^d}\langle v \rangle_\lambda \bigg(\log\frac{1}{ |v|^{2s}\langle v \rangle_\lambda}\bigg)_+^{\frac{d}{2}} \mathrm{d}v  \sim 1
$$
and, since on $OA$ we have $2s+1 = \frac{d+1}{p'}$, then it follows that 
\begin{align*}
\bigg\|\frac{1}{|v|^{2s}}\int_{\mathbb{R}} g(x+tv,t)\,\mathrm{d}t\bigg\|_{L^{p',r'}}^{r'} &\geq \int_0^\delta \left(\lambda \left|\left\{(x,v): |G(x,v)|\geq \lambda\right\}\right|^{\frac{1}{p'}}\right)^{r'}\frac{\mathrm{d}\lambda}{\lambda} \\
&\gtrsim \int_0^\delta \frac{\mathrm{d}\lambda}{\lambda} = \infty
\end{align*}
for sufficiently small $\delta>0$. On the other hand, it is easy to check that $\|g\|_{L_t^{p',1}L_x^{q',1}}$ is finite when $g(x,t) = e^{-\pi(t^2 + |x|^2)}$.
\end{proof}

\subsection{Failure of Conjecture \ref{conj:FLLS} when $d=1$}
Note that when $d=1$ the point $A$ is given by $(\frac{1}{q},\frac{1}{p}) = (0,\frac{1}{2})$; the next result shows that $\mathcal{O}^0(p,q;(p,1))$ fails at $A$ in this case.
\begin{theorem}\label{t:KTweakfail}
When $d=1$, the estimate
\begin{equation}\label{e:KTweakfail}
\bigg\|\int_{\mathbb{R}}f(x-tv,v)\,\mathrm{d}v\bigg\|_{L^2_t L^\infty_x}
\lesssim \|f\|_{L^{2,1}}
\end{equation}
fails, and hence Conjecture \ref{conj:FLLS} is false when $d=1$.
\end{theorem}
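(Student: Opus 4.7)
My plan is to refute \eqref{e:KTweakfail} directly by testing it against characteristic functions of Besicovitch sets, and then to deduce the failure of Conjecture \ref{conj:FLLS} in dimension $d=1$ from Proposition \ref{p:semiclassical} applied at $s=0$, $(p,q) = (2, \infty)$, $\alpha = 2$, $\beta = 1$ (with a routine extension of that proposition to the degenerate endpoint $\widetilde{r} = \infty$). Recall Besicovitch's classical construction: for every $\varepsilon > 0$ there is a measurable set $K_\varepsilon \subset \mathbb{R}^2$ with $|K_\varepsilon| < \varepsilon$ that contains a unit line segment in every direction.

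\textbf{Key geometric identification.} For $f_\varepsilon := \chi_{K_\varepsilon}$, the velocity average
\[
\rho f_\varepsilon(x,t) = \int_{\mathbb{R}} \chi_{K_\varepsilon}(x - tv, v) \, \mathrm{d}v
\]
equals the $v$-extent (i.e.~the Hausdorff length of the intersection multiplied by the Jacobian factor $(1+t^2)^{-1/2}$) of $K_\varepsilon$ along the affine line $\ell_{x,t} := \{(y,w) \in \mathbb{R}^2 : y + tw = x\}$. The direction vector $(-t,1)/\sqrt{1+t^2}$ of $\ell_{x,t}$ sweeps out every line direction in $\mathbb{R}^2$ (in the projective sense) as $t$ ranges over $\mathbb{R}$, missing only the horizontal direction which corresponds to $t = \infty$ and thus a null set in time. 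Hence, for every $t \in \mathbb{R}$, the Besicovitch property of $K_\varepsilon$ yields some $c_t \in \mathbb{R}$ such that $\ell_{c_t, t} \cap K_\varepsilon$ contains a segment of unit Hausdorff length, and the pointwise lower bound
\[
\|\rho f_\varepsilon(\cdot, t)\|_{L^\infty_x} \;\geq\; \rho f_\varepsilon(c_t, t) \;\geq\; (1+t^2)^{-1/2}
\]
follows.

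\textbf{Contradiction and conclusion.} Integrating in time yields the $\varepsilon$-independent lower bound $\|\rho f_\varepsilon\|_{L^2_t L^\infty_x} \geq (\int_{\mathbb{R}} (1+t^2)^{-1} \, \mathrm{d}t)^{1/2} = \sqrt{\pi}$, whereas the elementary identity $\|\chi_E\|_{L^{2,1}} = 2|E|^{1/2}$ forces $\|f_\varepsilon\|_{L^{2,1}} \leq 2\sqrt{\varepsilon} \to 0$ as $\varepsilon \to 0$, in direct contradiction to \eqref{e:KTweakfail}. The failure of Conjecture \ref{conj:FLLS} in $d=1$ then follows by the contrapositive of (the endpoint version of) Proposition \ref{p:semiclassical}. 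No step of the plan is a serious obstacle: Besicovitch's construction is classical, the Jacobian computation is routine, and the only mild technicality is the measurability of $t \mapsto \|\rho f_\varepsilon(\cdot,t)\|_{L^\infty_x}$, which is immediate from Fubini applied to the jointly Borel function $\rho f_\varepsilon$.
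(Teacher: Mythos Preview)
Your proposal is correct and follows essentially the same approach as the paper: both test \eqref{e:KTweakfail} on characteristic functions of Besicovitch-type sets of small measure, obtain the uniform lower bound $\|\rho f(\cdot,t)\|_{L^\infty_x}\ge(1+t^2)^{-1/2}$ by exploiting the unit segment in direction $(-t,1)$, and then invoke Proposition~\ref{p:semiclassical} to conclude. The only cosmetic differences are that the paper uses $\delta$-neighbourhoods of a zero-measure Kakeya set rather than small-measure Besicovitch sets directly, and that you explicitly flag the $\widetilde r=\infty$ endpoint in Proposition~\ref{p:semiclassical}, which the paper applies without comment.
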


\begin{proof} By Proposition \ref{p:semiclassical}, once we show that \eqref{e:KTweakfail} fails, then the failure of Conjecture \ref{conj:FLLS} follows.

Let $\mathcal{E}_\delta$ be a $\delta$-neighbourhood of a Kakeya set $\mathcal{E} \subset \mathbb{R}^2$ with zero Lebesgue measure. This means
that, for any direction $\theta\in \mathbb{S}^1$, there exists a unit length line segment $\ell_\theta\subset \mathcal{E}$ such that $\theta$ and $\ell_\theta$ are parallel; the existence of such sets with zero Lebesgue measure goes back to \cite{Besicovitch}.

Assuming \eqref{e:KTweakfail}, and testing on $f=\chi_{\mathcal{E}_\delta}$, we see that
\begin{equation*}
\|\rho\chi_{\mathcal{E}_\delta}\|_{L^2_tL^\infty_x} \lesssim \|\chi_{\mathcal{E}_\delta}\|_{L^{2,1}_{x,v}} =
|\mathcal{E}_\delta|^\frac{1}{2},
\end{equation*}
where the notation $\rho$ was introduced in \eqref{e:rhodefn}. For the left-hand side, we claim that
\begin{equation*}
\sup_{x\in\R}\rho\chi_{\mathcal{E}_\delta}(x,t)\geq \frac{1}{(t^2+1)^\frac{1}{2}}
\end{equation*}
for all $t\in\mathbb{R}$, which quickly implies 
$
\|\rho\chi_{\mathcal{E}_\delta}\|_{L^2_t L^\infty_x} \gtrsim 1
$
uniformly in $\delta$, and hence, by taking a limit $\delta \to 0$ we obtain the desired contradiction.

To establish the remaining claim, we fix any $t\in\R$ and choose $\ell_t\subset \mathcal{E}_\delta$ so that $\ell_t$ and $(-t,1)$ are parallel. Further, we choose $x_t\in\R$ so that $x_t$ is an intersection point between the line extension of $\ell_t$ and the $x$-axis. Then it follows that 
\[
\sup_{x\in\R} \rho\chi_{\mathcal{E}_\delta}(x,t) \geq  \rho\chi_{\mathcal{E}_\delta}(x_t,t) \geq \int_{v_1}^{v_2} \chi_{\mathcal{E}_\delta}(x_t-vt,v)\,\mathrm{d}v = v_2-v_1,  
\] 
where we chose $v_1,v_2\in\R$ so that the line segment combining points 
$
(x_t-v_1t,v_1)
$
and 
$
(x_t-v_2t,v_2)
$ 
corresponds to the line segment $\ell_t$, and thus $v_2-v_1=(t^2+1)^{-\frac{1}{2}}$.
\end{proof}
The above argument shows that \eqref{e:KTweakfail} cannot be recovered even if we use the weak space $L^{2,\infty}_tL^\infty_x$ on the left-hand side.

\section{Weak-type estimates: Proof of Theorem \ref{t:smoothLorentz}} \label{section:AD}

We assume initially that $d \geq 3$ since we shall make use of the Keel--Tao endpoint $(\frac{1}{q},\frac{1}{p}) = (\frac{d-2}{d},1)$ for the classical estimates \eqref{e:classical}. Although this estimate fails when $d=2$, minor modification of the argument below gives the desired estimate.

By Proposition \ref{p:dualityprinciple}, it suffices to prove
\[
\| W_1 UU^* W_2\|_{\mathcal{C}^{p',1}} \lesssim \|W_1\|_{(2p',2), 2q'} \|W_2\|_{(2p',2), 2q'}
\]
whenever $\frac{2}{p} + \frac{d}{q} = d$ and $\frac{1}{q} \in (\frac{d-2}{d},\frac{d-1}{d+1})$. Here, $Uf(x,t) = e^{it\Delta}f(x)$ and therefore 
\[
UU^*F(x,t) = \int_\mathbb{R} e^{i(t-t')\Delta}F(\cdot,t')(x)\, \mathrm{d}t'.
\]
By relabelling, the desired estimate is equivalent to
\begin{equation}\label{e:dform}
\| W_1 UU^* W_2\|_{\mathcal{C}^{\sigma,1}} \lesssim \|W_1\|_{(u,2), r} \|W_2\|_{(u,2), r}
\end{equation} 
for $\sigma > d+1$, $u =2\sigma$, and $\frac dr+\frac 2u=1$. (Here, as in Section \ref{section:local}, we are using the notation $\|\cdot\|_{(p,r),q}=\|\cdot\|_{L^{p,r}_tL^q_x}$ and $\|\cdot\|_{p,q}=\|\cdot\|_{L^{p}_tL^q_x}$.) Estimates of this type were established by Frank--Sabin \cite{frank-sabin-2} by making use of interpolation along an analytic family of operators for which we only have a limited class of estimates. Here, we proceed more concretely and start by decomposing the operator $W_1 UU^*W_2$ by writing
\[
S_jF(x,t) = \int_\mathbb{R} \phi_j(t-t') e^{i(t-t')\Delta}F(\cdot,t')(x)\, \mathrm{d}t'
\]
where $\phi_j(t) = \chi_{[1,2]}(\frac{|t|}{2^j})$ and $j \in \mathbb{Z}$. This gives the decomposition $W_1 UU^*W_2 = \sum_{j \in \mathbb{Z}} W_1S_jW_2$ and allows us to have a wide enough class of estimates to apply bilinear real interpolation to obtain \eqref{e:dform}. In fact, we shall prove the somewhat stronger estimate
\begin{equation} \label{e:dformv2}
\sum_{j \in \mathbb{Z}} \|W_1S_jW_2\|_{\mathcal{C}^{\sigma,1}} \lesssim \|W_1\|_{(u,2), r} \|W_2\|_{(u,2), r}
\end{equation}
for $\sigma > d+1$, $u =2\sigma$, and $\frac dr+\frac 2u=1$. 

To begin the proof of \eqref{e:dformv2}, we define the set $\mathfrak Q \subset [0,\frac12]^3$ to be the convex hull of 
vertices 
\[ 
O=(0,0,0), \,\, Q_0=(0,\tfrac1d, 0), \,\, Q_0'=(\tfrac1d,0,0), \,\, Q_1=(\tfrac12,\tfrac12, 0), \,\, Q_2=(\tfrac12, \tfrac12, \tfrac12).
\] 
Also, we set $Q_3=(\frac1d, \frac1d, 0)$ and $Q_4=(\frac1{d+1},\frac1{d+1},\frac1{d+1})$. Note that the open line segment $(Q_3,Q_4)$ is contained in the interior of $\mathfrak Q$ and the estimate \eqref{e:dform} corresponds to the case that $(\frac1r, \frac1r,\frac1\sigma)\in [Q_3,Q_4)$. The following key lemma gives various estimates for the localised operator $S_j$. 
\begin{lemma}\label{l:basic} 
Let $d \geq 3$. If $(\frac1r, \frac1s,\frac1\sigma)\in \mathfrak Q$ then
\begin{equation}\label{e:basic-est} 
\| W_1 S_j W_2\|_{\mathcal{C}^\sigma} \lesssim 2^{(1 -\frac d2(\frac1r+\frac1s) - \frac 1u-\frac 1v)j} 
\|W_1\|_{u,r} \|W_1\|_{v,s} 
\end{equation}
holds whenever $u, v\ge \sigma$ and $\frac1u+\frac1v = \frac1\sigma$. 
\end{lemma}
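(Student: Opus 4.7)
The plan is to establish the estimate at the five extreme points of $\mathfrak{Q}$, namely $O, Q_0, Q_0', Q_1$ and $Q_2$, and then recover the interior of $\mathfrak{Q}$ by multilinear complex interpolation. A key observation is that, using the relation $\frac{1}{u}+\frac{1}{v}=\frac{1}{\sigma}$, the exponent of $2^j$ in the claimed bound simplifies to
\[
\beta(r,s,\sigma) := 1 - \tfrac{d}{2}\bigl(\tfrac{1}{r}+\tfrac{1}{s}\bigr) - \tfrac{1}{\sigma},
\]
which is affine in $(\frac{1}{r},\frac{1}{s},\frac{1}{\sigma})$; hence bounds at the vertices will determine the bounds throughout $\mathfrak{Q}$.

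The vertex estimates can be established as follows. At $Q_2 = (\frac{1}{2},\frac{1}{2},\frac{1}{2})$, where $\sigma = 2$, the Hilbert--Schmidt norm admits a direct kernel computation: the pointwise dispersive bound $|e^{is\Delta}(z)| \lesssim |s|^{-d/2}$ supplies a factor of $2^{-dj}$, and then Young's convolution inequality in time applied to the kernel $\chi_{|t-t'|\sim 2^j}$ (whose $L^1$ norm is $\sim 2^j$) yields the desired bound for any $u,v \geq 2$ with $\frac{1}{u}+\frac{1}{v} = \frac{1}{2}$. At $O$ one uses the trivial operator-norm bound $\|S_j\|_{L^2 \to L^2} \lesssim 2^j$, which is read off from the Fourier symbol $\widetilde{\phi_j}(\tau+|\xi|^2)$. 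At $Q_1 = (\frac{1}{2},\frac{1}{2},0)$, the dispersive estimate combined with Cauchy--Schwarz in time delivers the factor $2^{(1-d/2)j}$. At $Q_0$ (with $Q_0'$ following by symmetry), I would test the bilinear pairing $\langle W_1 S_j W_2 F, G \rangle$: an application of H\"older in space with exponents $(2d/(d-2), 2d/(d+2))$, followed by Cauchy--Schwarz in the time variable restricted to the window $|t-t'|\sim 2^j$, reduces the problem to the Keel--Tao endpoint Strichartz estimate \cite{KeelTao} applied to $e^{-i(t-t')\Delta}(GW_1)(\cdot,t)$, from which the factor $2^{j/2}$ emerges.

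With the vertex estimates in hand, the interpolation step is carried out via the standard complex interpolation of analytic families of operators in the Schatten--Lebesgue framework, as employed in \cite{frank-sabin-1}. Since $\mathcal{C}^\sigma$ and $L^u_t L^r_x$ are both complex interpolation scales and $\beta(r,s,\sigma)$ is affine in the parameters, the resulting bounds on the interior of $\mathfrak{Q}$ are precisely those stated in the lemma.

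The main obstacle will be the endpoint estimate at $Q_0$ (and $Q_0'$): this is the only vertex requiring more than the dispersive estimate, and it is essential that the Cauchy--Schwarz argument in time be arranged so that the restricted window $|t-t'|\sim 2^j$ produces exactly the factor $2^{j/2}$ compatible with a uniform Keel--Tao bound. The assumption $d \geq 3$ enters precisely here, as the Keel--Tao endpoint is false in lower dimensions. The multilinear interpolation across three parameters, while involving some bookkeeping (with $(u,v)$ varying along the interpolation to maintain $\frac{1}{u}+\frac{1}{v}=\frac{1}{\sigma}$), is otherwise routine.
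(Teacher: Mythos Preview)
Your proposal is correct and follows essentially the same route as the paper: establish the estimate at the five vertices $O, Q_0, Q_0', Q_1, Q_2$ (using unitarity/Fourier symbol at $O$, dispersion at $Q_1$ and $Q_2$, and the Keel--Tao endpoint at $Q_0, Q_0'$), then interpolate. The only minor discrepancy is a labeling slip at $Q_0$: since $(\tfrac1r,\tfrac1s)=(0,\tfrac1d)$ there, the Keel--Tao estimate should be applied on the $W_2$ side (placing $W_2$ in $L^\infty_t L^d_x$), not to $GW_1$ as you wrote---but this is cosmetic and the argument goes through as you describe.
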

\begin{proof}
The claimed estimates are consequences of interpolation between the estimates where $\sigma=\infty$ (at $O, Q_0, Q_0'$ and $Q_1$) and $\sigma=2$ (at $Q_2$). 

Suppose first $\sigma=\infty$, in which case $u = v =\infty$. At $O$, using the unitary property of the Schr\"odinger propagator and the Cauchy--Schwarz inequality, one can show that
\[
\|S_jF\|_{2,2} \lesssim 2^j \|F\|_{2,2}
\]
and therefore
\[
\|W_1S_jW_2\|_{\mathcal{C}^\infty} \lesssim 2^j\|W_1\|_{\infty,\infty} \|W_2\|_{\infty,\infty}.
\]
This gives \eqref{e:basic-est} at $O$.

Next, we consider $Q_1$. Using H\"{o}lder's inequality and the dispersive estimate for the Schr\"odinger propagator, 
\begin{align*}
|\langle F, W_1 S_j W_2 G \rangle_{L^2_{x,t}}| 
&\lesssim 2^{-\frac d2 j}
\int_\mathbb{R} \| F(t)W_1(t)\|_{L^1_x} \int_\mathbb{R} \|\phi_j(t-t')G(t')W_2(t')\|_{L^1_x} \, \mathrm{d}t'\mathrm{d}t.
\end{align*}
Therefore
\begin{align*}
|\langle F, W_1 S_j W_2 G \rangle_{L^2_{x,t}}| &\lesssim 2^{(1-\frac d2)j}\| FW_1\|_{L^2_tL^1_x} \int_{\mathbb{R}} \phi_0(\tau)\|GW_2(t-2^j\tau)\|_{L^2_tL^1_x}\, \mathrm{d}\tau \\
&\lesssim
2^{(1-\frac d2)j} \| F\|_{2,2}\|W_1\|_{\infty,2} \|G\|_{2,2} \|W_2\|_{\infty,2}
\end{align*}
and \eqref{e:basic-est} at $Q_1$ follows.

The remaining cases $Q_0$ and $Q_0'$ follow from the classical endpoint Strichartz estimate \eqref{e:classical} of Keel--Tao \cite{KeelTao}, which in its dual form states that
\begin{equation} \label{e:KTendpointdual}
\bigg\| \int_\mathbb{R} e^{-it'\Delta}F(\cdot,t')(x) \, \mathrm{d}t' \bigg\|_{L^2_x} \lesssim \|F\|_{L^2_tL^{\frac{2d}{d+2}}_x}.
\end{equation}

We give the details at $Q_0$, where $(\frac{1}{r},\frac{1}{s}) = (\frac{1}{d},0)$; the case $Q_0'$ follows by similar considerations. By writing
\[ 
\langle F, W_1 S_j W_2 G \rangle_{L^2_{x,t}} = 
\int_\mathbb{R} \bigg\langle e^{-it\Delta}F(t)W_1(t), \int_\mathbb{R} e^{-it'\Delta} \phi_j(t-t')G(t')W_2(t') \,\mathrm{d}t' \bigg\rangle\, \mathrm{d}t
\] 
and using the Cauchy-Schwarz inequality, Plancherel's theorem and \eqref{e:KTendpointdual}, we see
\begin{align*}
|\langle F, W_1 S_j W_2 G \rangle_{x,t}| &\lesssim \int_\mathbb{R} \| F(t)W_1(t)\|_{L^2_x} \bigg\|  \int_\mathbb{R} e^{-it'\Delta} \phi_j(t-t')G(t')W_2(t') \, \mathrm{d}t' \bigg\|_{L^2_x} \, \mathrm{d}t 
\\
&\lesssim \int \| F(t)W_1(t)\|_{L^2_x} \| \| \phi_j(t-t')G(t')W_2(t')\|_{L_{t'}^2L^{\frac{2d}{d+2}}_x} \, \mathrm{d}t 
\\
&\lesssim \| FW_1\|_{2,2} \| \phi_j(t-t')G(t')W_2(t')\|_{L^2_tL_{t'}^2L^{\frac{2d}{d+2}}_x} 
\\
&\lesssim 2^\frac j2 \|W_1\|_{\infty,\infty} \|W_2\|_{\infty,d} \| F\|_{2,2} \|G\|_{2,2}
\end{align*}
which gives the desired bound at $Q_0$.

Finally, we consider $\sigma=2$ and the point $Q_2$. By making use of the kernel of $UU^*$ we see that 
\begin{align*}
\| W_1 S_j W_2\|_{\mathcal{C}^2}^2 & \lesssim 2^{-dj} \int \phi_j(t-t')  |W_1(x,t)|^2 |W_2(y,t')|^2 \,\mathrm{d}x\mathrm{d}y\mathrm{d}t\mathrm{d}t' \\
& \lesssim 2^{(2-d-\frac2u-\frac2v)j} \|W_1\|_{u, 2}^2 \|W_2\|_{v, 2}^2
\end{align*}
for $2\le u,v \le \infty$ and $\frac1u+\frac1v\ge \frac12$. This completes the proof. 
\end{proof}

Finally, we use the estimates in Lemma \ref{l:basic} to obtain \eqref{e:dformv2} and hence Theorem \ref{t:smoothLorentz}. To this end, fix $(\frac1{r_\ast}, \frac1{r_\ast},\frac1{\sigma_\ast})\in (Q_3,Q_4)$ so that
\[ 
1=\frac d{r_\ast}+\frac 1{\sigma_\ast}, \qquad {r_\ast}\in (d, {d+1}) .
\] 
We need to show \eqref{e:dformv2} with $(u,r,\sigma)=(2\sigma_\ast, r_\ast, \sigma_\ast)$. 

Now choose $\sigma_0$ and $\sigma_1$ such that $\sigma_0 > \sigma_\ast> \sigma_1 \ge r_\ast$, and define
\[
\beta_0 :=1 -\frac d{r_*} -\frac1{\sigma_0}, \qquad \beta_1 :=1 -\frac d{r_*} -\frac1{\sigma_1}.
\]
Then $\beta_0 > 0 > \beta_1$. Also set $\kappa$ by 
\[ 
\frac1\kappa=\frac1{\sigma_1}-\frac1{2\sigma_0}.
\] 
From Lemma \ref{l:basic} we immediately obtain the estimates
\begin{align*}
2^{-\beta_0 j} \| W_1 S_j W_2\|_{\mathcal{C}^{\sigma_0} } & \lesssim 
\|W_1\|_{2\sigma_0,r_*} \|W_2\|_{2\sigma_0,r_*}, \\
2^{-\beta_1j} 
\| W_1 S_j W_2\|_{\mathcal{C}^{\sigma_1} }&\lesssim 
\|W_1\|_{2\sigma_0,r_*} \|W_2\|_{\kappa,r_*}, \\ 
2^{-\beta_1j} \| W_1 S_j W_2\|_{\mathcal{C}^{\sigma_1} } &\lesssim 
\|W_1\|_{\kappa,r_*} \|W_2\|_{2\sigma_0,r_*}, 
\end{align*}
which may be interpreted as the boundedness of the vector-valued bilinear operator $T : (W_1,W_2) \mapsto \{ W_1 S_j W_2\}_j$ as in \eqref{e:000}, \eqref{e:011} and \eqref{e:101}, with the spaces $A_j, B_j, C_j$ ($j=0,1$) given by
\[ 
A_0 = B_0= L^{2\alpha_0}_t L^{r_\ast}_x, \qquad A_1 = B_1= L^{\kappa}_t L^{r_\ast}_x 
\]
and
\begin{equation} \label{e:C0C1}
C_0 = \ell_{\beta_0}^\infty(\mathcal{C}^{\sigma_0}), \qquad C_1= \ell_{\beta_1}^\infty(\mathcal{C}^{\sigma_1}).
\end{equation}
Thus, \eqref{e:dformv2} follows once we show that $T$ is bounded
\begin{equation} \label{e:Tgoal}
L^{2\sigma_\ast,2}_tL^{r_\ast}_x \times L^{2\sigma_\ast,2}_tL^{r_\ast}_x \to \ell^1_0(\mathcal{C}^{\sigma_\ast,1}).
\end{equation}
In order to establish this, first notice that by choosing $\theta \in (0,1)$ such that 
\[ 
(1-\theta)\beta_0+\theta\beta_1=0, 
\]
or equivalently, $\frac{1-\theta}{\sigma_0} + \frac{\theta}{\sigma_1} = \frac{1}{\sigma_\ast}$, it follows that
\[
(C_0,C_1)_{\theta,1} = \ell^1_0((\mathcal{C}^{\sigma_0},\mathcal{C}^{\sigma_1})_{\theta,1}) = \ell^1_0(\mathcal{C}^{\sigma_\ast,1}).
\] 
For the first identity, see \cite{BerghLofstrom}, and for the second, we refer the reader to the work of Merucci \cite{Merucci}. Finally, we note that with the above choices of $\kappa$ and $\theta$, we have 
\[
(L^{2\sigma_0}_t L^{r_\ast}_x,L^{\kappa}_t L^{r_\ast}_x )_{\frac{\theta}{2},2} = L^{2\sigma_\ast,2}_tL^{r_\ast}_x.
\]
Here, note that the second exponent in mixed norm is fixed, so we may treat the mixed norm as a norm in vector-valued space. Hence, by Proposition \ref{p:bilinearint}, $T : (W_1,W_2) \mapsto \{ W_1 S_j W_2\}_j$ is bounded according to \eqref{e:Tgoal}; this implies \eqref{e:dformv2} and hence Theorem \ref{t:smoothLorentz} for $d \geq 3$.

Finally, we remark that when $d = 2$ the proof needs a very minor modification since \eqref{e:classical} fails when $(\frac{1}{q},\frac{1}{p}) = (0,1)$ and thus the proof of the claimed estimates at $Q_0$ and $Q_0'$ in Lemma \ref{l:basic} does not work as it stands. However, we may apply \eqref{e:classical} with $(\frac{1}{q},\frac{1}{p}) = (\varepsilon,1-\varepsilon)$ for all $\varepsilon \in (0,1)$ and inserting such estimates means we can obtain the analogous estimates at points $Q_{0,\varepsilon} = (\frac{1}{2} - \varepsilon,0)$ and $Q_{0,\varepsilon}' = (0,\frac{1}{2} - \varepsilon)$ for sufficiently small $\varepsilon > 0$. The same argument as used above then yields Theorem \ref{t:smoothLorentz} for $d=2$.

\begin{remark} The estimates in Theorem \ref{t:smoothLorentz} may be interpolated with other available estimates in the region $OCDA$ to give different types of weak-type bounds. It seems reasonable that progress could be made towards obtaining strong-type bounds, perhaps by exploiting further the estimates in Lemma \ref{l:basic} and different types of multilinear interpolation arguments. Alternatively, one may try to avoid the loss of information in passing from \eqref{e:dform} to \eqref{e:dformv2}; for example, by considering the $\mathcal{C}^2$ norm, it is natural to consider an $\ell^2$-sum on the left-hand side of \eqref{e:dformv2}. Proceeding in this way, we can somewhat refine the above argument to obtain
\[
\bigg\|\sum_j\lambda_j |e^{it\Delta}f_j|^2\bigg\|_{L^{p,p'}_tL^q_x} \lesssim \|\lambda\|_{\ell^{p}}
\]
whenever $(\frac{1}{q},\frac{1}{p}) \in [D,A)$.
\end{remark}

\section{Further results and remarks} \label{section:further}

\subsection{The case $p=\infty$}
When $p=\infty$, observe that $\alpha^*(\infty,q) = q$. We begin with an observation related to Lieb's generalised version of the Sobolev inequality in \eqref{e:Lieb_Sobolev}, showing that in the framework of estimates of the form $\mathcal{O}^s(\infty,q;(q,r))$, then the only possibility is that $r = 1$. Again, we first show the failure of the corresponding velocity average estimates.
\begin{proposition} \label{p:OBr>1fail}
Suppose $d \geq 1$. Then 
\begin{equation} \label{e:OBr>1fail}
\bigg\|\int_{\mathbb{R}^d}f(x-tv,v) \,\frac{\mathrm{d}v}{|v|^{\frac{d}{q'}}}\bigg\|_{L^\infty_tL^{q,\infty}_x}\lesssim \|f\|_{L^{q,r}}
\end{equation}
fails whenever $q \in (1,\infty]$ and $r > 1$. Consequently, if $q \in (1,\infty]$ and $2s = \frac{d}{q'}$, then the estimate
\[
\bigg\|\sum_j\lambda_j|e^{it\Delta}f_j|^2\bigg\|_{L^\infty_tL^{q,\infty}_x} \lesssim \|\lambda\|_{\ell^{q,r}}
\]
for any orthonormal system $(f_j)_j$ in $\dot{H}^s(\mathbb{R}^d)$ and any sequence $\lambda=(\lambda_j)_j$ in $\ell^{q,r}(\mathbb{C})$ fails for all $r > 1$.
\end{proposition}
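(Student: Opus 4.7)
The plan is to disprove the velocity-average estimate \eqref{e:OBr>1fail} by an explicit test function, and then transfer the failure to the orthonormal Schr\"odinger estimate via Proposition \ref{p:semiclassical} (whose semi-classical argument extends routinely to the weak-type norms $L^\infty_tL^{q,\infty}_x$ appearing here). For $r\in(1,\infty)$ the transfer is direct; the endpoint $r=\infty$ then follows from the $r=2$ case by the nesting $\|\lambda\|_{\ell^{q,\infty}} \leq C\|\lambda\|_{\ell^{q,2}}$, since an estimate with $\ell^{q,\infty}$ on the right would imply one with $\ell^{q,2}$, already ruled out.

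To disprove \eqref{e:OBr>1fail} for a given $r>1$, I would fix $R>0$ and choose $\alpha\in(1/r,1]$, a range which is non-empty precisely because $r>1$, and test against the product-type function
\[
f(x,v) = \chi_{|x|\leq R}(x)F(v), \qquad F(v) = |v|^{-d/q}\bigl(\log(1/|v|)\bigr)^{-\alpha}\chi_{|v|\leq e^{-1}}(v).
\]
On the positive-measure set $\{(x,t):|x|\leq R/2,\,|t|\leq R/2\}$ the inequality $|x-tv|\leq R$ holds for every $|v|\leq 1$, so the velocity average is pointwise bounded below by $\int_{|v|\leq e^{-1}}F(v)|v|^{-d/q'}\mathrm{d}v$. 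Passing to polar coordinates and substituting $u=\log(1/|v|)$ reduces this to $c\int_1^\infty u^{-\alpha}\mathrm{d}u$, which diverges since $\alpha\leq 1$. Hence the velocity average is identically $+\infty$ on a set of positive measure in $(x,t)$, forcing the left-hand side of \eqref{e:OBr>1fail} to be infinite.

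It remains to check that $\|f\|_{L^{q,r}(\mathbb{R}^{2d})}$ is finite. A direct computation of the decreasing rearrangement gives $F^*(t)\sim t^{-1/q}(\log(1/t))^{-\alpha}$ as $t\to 0^+$, from which $\|F\|_{L^{q,r}(\mathbb{R}^d)}^r\sim\int_{\log(1/\delta)}^\infty u^{-\alpha r}\mathrm{d}u<\infty$, precisely because $\alpha r>1$. The product structure of $f$ yields $\|f\|_{L^{q,r}(\mathbb{R}^{2d})}\lesssim R^{d/q}\|F\|_{L^{q,r}(\mathbb{R}^d)}<\infty$, producing the desired contradiction. The main obstacle is the simultaneous calibration of the power and logarithmic factors in $F$: the exponent $-d/q$ places $F$ at the critical Lorentz scaling in $v$, and the logarithmic correction with $\alpha\in(1/r,1]$ is the minimal perturbation separating $L^{q,r}$-membership of $F$ from divergence of $\int F(v)|v|^{-d/q'}\mathrm{d}v$. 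This window closes exactly at $r=1$, consistent with the validity of Lieb's inequality \eqref{e:Lieb_Sobolev} at the borderline.
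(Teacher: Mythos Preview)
Your proof is correct but follows a genuinely different route from the paper's. The paper argues by duality: it passes to the dual inequality
\[
\bigg\|\frac{1}{|v|^{d/q'}}\int_{\mathbb{R}} g(x+tv,t)\,\mathrm{d}t\bigg\|_{L^{q',r'}}\lesssim \|g\|_{L^1_tL^{q',1}_x}
\]
and tests it on the approximate identity $g_\varepsilon(x,t)=\varepsilon^{-1}e^{-\pi t^2/\varepsilon^2}e^{-\pi|x|^2}$, so that $\|g_\varepsilon\|_{L^1_tL^{q',1}_x}\sim 1$ while, as $\varepsilon\to 0$, the left-hand side converges to $\|\,|v|^{-d/q'}e^{-\pi|x|^2}\|_{L^{q',r'}}=\infty$. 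The obstruction is thus exposed as the local non-membership of $|v|^{-d/q'}$ in $L^{q',r'}$ for $r'<\infty$.

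Your argument is direct, with no passage to the dual: you build a product test function $f(x,v)=\chi_{|x|\le R}(x)\,|v|^{-d/q}(\log(1/|v|))^{-\alpha}\chi_{|v|\le e^{-1}}(v)$ at the critical Lorentz scaling in $v$, with a logarithmic correction tuned so that $\alpha\in(1/r,1]$. The upper constraint $\alpha\le 1$ forces the weighted velocity integral to diverge, while $\alpha r>1$ keeps $f\in L^{q,r}$. The advantage of the paper's approach is brevity and clean Gaussian algebra; the advantage of yours is that it avoids duality altogether and makes the role of the Lorentz index $r$ completely explicit through the window $(1/r,1]$, which visibly collapses exactly at $r=1$, in agreement with the validity of the restricted weak-type estimate at that endpoint.
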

\begin{proof}
By Proposition \ref{p:semiclassical}, it suffices to show that \eqref{e:OBr>1fail} fails, or equivalently, the failure of
\begin{equation} \label{e:OBdualLorentz}
\bigg\|\frac{1}{|v|^{\frac{d}{q'}}}\int_{\mathbb{R}} g(x+tv,t) \, \mathrm{d}t\bigg\|_{L^{q',r'}}\lesssim \|g\|_{L_t^{1}L_x^{q',1}}
\end{equation}
for $q,r \in (1,\infty]$. For this, we test on the functions $g_\varepsilon(x,t) = \frac{1}{\varepsilon} e^{-\pi\frac{t^2}{\varepsilon^2}} e^{-\pi|x|^2}$, for which an elementary change of variables gives $\|g_\varepsilon\|_{L_t^{1}L_x^{q',1}} \sim 1$ independent of $\varepsilon$. However, as $\varepsilon$ tends to zero, the left-hand side of \eqref{e:OBdualLorentz} converges to
\[
\bigg\|\frac{1}{|v|^{\frac{d}{q'}}}e^{-|x|^2}\bigg\|_{L^{q',r'}}
\]
which is infinite for $q \in (1,\infty]$.
\end{proof}
We complement this negative result with the following restricted weak-type result at $r = 1$.
\begin{proposition} \label{p:Liebalt}
Let $d \geq 1$. If $q \in (1,\infty)$ and $2s = d - \frac{d}{q}$, then 
\[
\bigg\|\sum_j\lambda_j||D|^{-s}e^{it\Delta}f_j|^2\bigg\|_{L^\infty_tL^{q,\infty}_x} \lesssim \|\lambda\|_{\ell^{q,1}}
\]
for any orthonormal system $(f_j)_j$ in $L^2(\mathbb{R}^d)$ and any sequence $\lambda=(\lambda_j)_j$ in $\ell^{q,1}(\mathbb{C})$.
\end{proposition}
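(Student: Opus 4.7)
The plan is to reduce to a time-independent estimate and then deduce the result from Lieb's inequality \eqref{e:Lieb_Sobolev} via a dyadic atomic decomposition of $\ell^{q,1}$. Since $e^{it\Delta}$ is unitary on $L^2$ and commutes with $|D|^{-s}$, the system $(e^{it\Delta}f_j)_j$ is orthonormal in $L^2(\mathbb{R}^d)$ for every fixed $t$, so the $L^\infty_t$ norm is simply a pointwise supremum in $t$ and it suffices to prove
\begin{equation*}
\bigg\| \sum_j \lambda_j ||D|^{-s} g_j|^2 \bigg\|_{L^{q,\infty}_x} \lesssim \|\lambda\|_{\ell^{q,1}}
\end{equation*}
for every orthonormal system $(g_j)_j$ in $L^2(\mathbb{R}^d)$ and every $\lambda \in \ell^{q,1}$.

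Splitting $\lambda$ into the positive and negative parts of its real and imaginary components and permuting the indices (which preserves orthonormality of $(g_j)_j$), I may assume $\lambda \ge 0$ and $\lambda = \lambda^*$. Setting $E_k = \{j : \lambda_j > 2^k\}$ for $k \in \mathbb{Z}$, a layer-cake computation yields the pointwise comparison
\begin{equation*}
\lambda_j \le \sum_{k \in \mathbb{Z}} 2^k \chi_{E_k}(j) \le 2\lambda_j.
\end{equation*}
Each subfamily $(g_j)_{j \in E_k}$ is orthonormal in $L^2$ as a subset of an orthonormal system, so Lieb's inequality \eqref{e:Lieb_Sobolev} applied with trivial coefficients and the scaling $2s = d - d/q$ gives
\begin{equation*}
\bigg\|\sum_{j \in E_k} ||D|^{-s}g_j|^2\bigg\|_{L^q_x} \lesssim |E_k|^{1/q}.
\end{equation*}

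Since $q > 1$, the space $L^{q,\infty}$ is normable (with $\|\cdot\|_{L^{q,\infty}}^*$ equivalent to $\|\cdot\|_{L^{q,\infty}}$, see \eqref{e:normequiv}), and combined with the embedding $L^q \hookrightarrow L^{q,\infty}$ this permits the triangle inequality to be used in the dyadic sum:
\begin{equation*}
\bigg\|\sum_j \lambda_j ||D|^{-s}g_j|^2\bigg\|_{L^{q,\infty}_x} \lesssim \sum_{k \in \mathbb{Z}} 2^k \bigg\|\sum_{j \in E_k} ||D|^{-s}g_j|^2\bigg\|_{L^q_x} \lesssim \sum_{k \in \mathbb{Z}} 2^k |E_k|^{1/q}.
\end{equation*}
A standard dyadic discretisation of the identity $\|\lambda\|_{\ell^{q,1}} \sim \int_0^\infty N(t)^{1/q} \, \mathrm{d}t$, with $N(t) = \#\{j : \lambda_j > t\}$, identifies the last sum with $\|\lambda\|_{\ell^{q,1}}$ up to constants, completing the argument. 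There is no real obstacle here: the result is essentially a restricted-type reformulation of Lieb's inequality \eqref{e:Lieb_Sobolev} combined with an atomic decomposition of the target Lorentz sequence space, and the hypothesis $q > 1$ enters only through the normability of $L^{q,\infty}$.
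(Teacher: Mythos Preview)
Your argument is correct, but it takes a different route from the paper's. The paper proves Proposition~\ref{p:Liebalt} by invoking the frequency-localised estimates \eqref{e:localOAB} at two nearby exponents $q_0, q_1$ flanking $q$, and then applying the Bourgain-type summation trick of Proposition~\ref{p:Bourgaintrick} (with $L^p_tL^q_x$ replaced by $L^q_x$) to pass from the dyadic pieces $P_k$ to the full estimate; thus the proof there is tied to the frequency-localised machinery developed in Section~\ref{section:local}. Your proof instead rests entirely on Lieb's classical inequality \eqref{e:Lieb_Sobolev}, applied with constant coefficients on the level sets $E_k$, followed by an atomic decomposition of $\ell^{q,1}$ and the countable triangle inequality in $L^{q,\infty}$ (via \eqref{e:normequiv}). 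This is more elementary and self-contained, requiring no Littlewood--Paley decomposition and no interpolation; it is essentially the standard mechanism by which a strong-type bound $\ell^1 \to L^q$ with operator norm growing like $(\#E)^{1/q}$ on characteristic inputs yields a restricted weak-type $\ell^{q,1} \to L^{q,\infty}$ bound. The paper's approach, on the other hand, is consistent with the methodology used elsewhere (Propositions~\ref{p:Bourgaintrick} and~\ref{p:afterBtrick}) and does not appeal to Lieb's result as a black box. One minor remark: the reduction to $\lambda \ge 0$ can be done in one line via the pointwise bound $|\sum_j \lambda_j |\,|D|^{-s}g_j|^2| \le \sum_j |\lambda_j|\,|\,|D|^{-s}g_j|^2$, since $\|\,|\lambda|\,\|_{\ell^{q,1}} = \|\lambda\|_{\ell^{q,1}}$; the splitting into four nonnegative parts is unnecessary (though not incorrect).
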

Of course, Proposition \ref{p:Liebalt} implies that, whenever $q \in (1,\infty)$ and $2s = d - \frac{d}{q}$, then 
\[
\bigg\|\sum_j\lambda_j||D|^{-s}f_j|^2\bigg\|_{L^{q,\infty}_x} \lesssim \|\lambda\|_{\ell^{q,1}}
\]
holds for all orthonormal systems $(f_j)_j$ in $L^2(\mathbb{R}^d)$ and sequences $\lambda=(\lambda_j)_j$ in $\ell^{q,1}(\mathbb{C})$. This is closely related to Lieb's generalisation of the classical Sobolev inequality in \eqref{e:Lieb_Sobolev}. The possibility of upgrading the weak $L^{q,\infty}_x$ norm on the left-hand side to $L^q_x$ seems to be an interesting problem.
\begin{proof}[Proof of Proposition \ref{p:Liebalt}]
We fix $t \in \mathbb{R}$, $q_\ast \in (1,\infty)$ and let $2s = d - \frac{d}{q_\ast}$. Also, we let $q_0$ and $q_1$ be given by $\frac{1}{q_0} = \frac{1}{q_\ast} + \delta$ and $\frac{1}{q_1} = \frac{1}{q_\ast} - \delta$, where $\delta > 0$ is sufficiently small (so that $q_0, q_1 \in (1,\infty)$. By \eqref{e:localOAB} we obtain
\[
\bigg\|\sum_j\lambda_j|e^{it\Delta}P_k |D|^{-s}f_j|^2\bigg\|_{L^{q_i}_x} \lesssim 2^{(-1)^{i+1}\varepsilon_i k} \|\lambda\|_{\ell^{q_i}}
\] 
where $\varepsilon_i = (-1)^{i+1}(d-2s-\frac{d}{q_i}) = d\delta > 0$ for each $i=0,1$. From this, we deduce that
\[
\bigg\|\sum_j\lambda_j|e^{it\Delta}|D|^{-s}f_j|^2\bigg\|_{L^{q_\ast,\infty}_x} \lesssim \|\lambda\|_{\ell^{q_\ast,1}}
\] 
as desired. (For this final step, we are using an argument as in the proof of Proposition \ref{p:Bourgaintrick}, where we replace $L^p_tL^q_x$ with $L^q_x$.)
\end{proof}

\subsection{Weighted Strichartz estimates for the kinetic transport equation}

By putting together Theorem \ref{t:smooth} and Proposition \ref{p:semiclassical} we immediately obtain the following weighted estimates for the solution of the kinetic transport equation.
\begin{theorem}
Let $d \geq 1$. Suppose $(\frac{1}{p},\frac{1}{q})$ belongs to ${\rm int}\,OAB \cup [B,A)$. If $\alpha = \alpha^*(p,q)$ and $2s=d-(\frac{2}{p}+\frac{d}{q})$, then
\begin{equation*}
\bigg\|\int_{\mathbb{R}^d} f(x-tv,v)\, \frac{\mathrm{d}v}{|v|^{2s}} \bigg\|_{L^p_tL^q_x}\lesssim \|f\|_{L^{\alpha}}
\end{equation*}
for all $f\in L^{\alpha}$.
\end{theorem}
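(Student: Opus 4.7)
The theorem is stated as an immediate consequence of the two main ingredients already established in the paper, so the plan is simply to verify that the hypotheses of Proposition \ref{p:semiclassical} are met by the orthonormal Schr\"odinger estimates at our disposal and then read off the velocity average estimate.

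First, I would split the admissible region into the interior ${\rm int}\,OAB$ and the boundary segment $[B,A)$. On ${\rm int}\,OAB$, Theorem \ref{t:smooth}(1) supplies $\mathcal{O}^s(p,q;\alpha^*(p,q))$, that is, the strong-type estimate
\[
\bigg\|\sum_j\lambda_j|e^{it\Delta}f_j|^2\bigg\|_{L^p_tL^q_x}\lesssim\|\lambda\|_{\ell^{\alpha^*(p,q)}}
\]
for orthonormal systems $(f_j)_j$ in $\dot{H}^s(\mathbb{R}^d)$, where $2s=d-(\frac{2}{p}+\frac{d}{q})$. On $[B,A)$, the scaling relation forces $s=0$, and the corresponding estimate is exactly Theorem \ref{t:s=0}. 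Hence, in both cases, the hypothesis \eqref{e:weakRHS} of Proposition \ref{p:semiclassical} is available with $r=p$, $\widetilde{r}=q$, $\beta=\alpha=\alpha^*(p,q)$ (one notes that $p,q,\alpha<\infty$ throughout the claimed region, as required to invoke the semi-classical proposition).

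Applying Proposition \ref{p:semiclassical} then transfers each such orthonormal estimate into the velocity average estimate
\[
\bigg\|\int_{\mathbb{R}^d}f(x-tv,v)\,\frac{\mathrm{d}v}{|v|^{2s}}\bigg\|_{L^p_tL^q_x}\lesssim\|f\|_{L^{\alpha^*(p,q)}},
\]
which is exactly the desired conclusion. Since this is a pure combination of results already proved, there is no genuine obstacle; the only subtlety worth recording is that Proposition \ref{p:semiclassical} is stated for $p,q,r,\widetilde r,\beta$ finite but otherwise imposes no restriction beyond $s\in[0,\frac{d}{2})$, and one should confirm that the scaling identity $2s=d-(\frac{2}{p}+\frac{d}{q})$ together with the location of $(\frac1q,\frac1p)$ in ${\rm int}\,OAB\cup[B,A)$ indeed forces $s\in[0,\frac{d}{2})$. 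This last verification is immediate: on $[B,A)$ we have $s=0$, and in ${\rm int}\,OAB$ the conditions $\frac{1}{q}>\frac{d}{(d-1)p}$ and $\frac{2}{p}+\frac{d}{q}<d$ together with $p,q\geq 1$ bound $s$ strictly below $\frac{d}{2}$.
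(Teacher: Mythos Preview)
Your proposal is correct and follows essentially the same approach as the paper, which states the result as an immediate consequence of Theorem \ref{t:smooth} (together with Theorem \ref{t:s=0} on $[B,A)$) and Proposition \ref{p:semiclassical}. One tiny remark: your claim that $p,q,\alpha<\infty$ throughout the region fails at the single point $B$, where $p=\infty$; there, however, the kinetic estimate $\|\int f(x-tv,v)\,\mathrm{d}v\|_{L^\infty_tL^1_x}\le\|f\|_{L^1}$ is trivial by Fubini, so no appeal to Proposition \ref{p:semiclassical} is needed.
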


\subsection{The Schr\"odinger equation for the harmonic oscillator}
Corresponding to the Hermite operator $H = -\Delta + |x|^2$, we have the solution $e^{-itH}f$ of the Schr\"odinger equation for the quantum  harmonic oscillator
\[
\partial_t u + iHu = 0
\]
with initial data $u(x,0) = f(x)$, and where $x \in \mathbb{R}^d$, $d \geq 1$. The classical estimates in this case were proved by Koch--Tataru \cite{KochTataru} and may be stated as
\begin{equation} \label{e:Hermite}
\| |e^{-itH}f|^2 \|_{L^p_t((0,2\pi),L^q_x(\mathbb{R}^d))} \lesssim 1
\end{equation}
whenever $\|f\|_{L^2(\mathbb{R}^d)} = 1$, under the same conditions on $p$ and $q$ for \eqref{e:classical}; that is, $p,q \geq 1$ satisfy $\frac{2}{p} + \frac{d}{q} = d$ and $(p,q,d) \neq (1,\infty,2)$. Sj\"ogren and Torrea \cite{SjogrenTorrea} identified a transformation which facilitates a direct connection with the operator $e^{it\Delta}$, and thus the classical estimates \eqref{e:classical} and \eqref{e:Hermite} are equivalent. Using the same transformation, one may deduce an extension of \eqref{e:Hermite} to orthonormal systems of data. For brevity, we illustrate this by recording the following analogue of Theorem \ref{t:s=0}.
\begin{theorem}
Suppose $d\geq1$. If $p,q\geq1$ satisfy $\frac{2}{p}+\frac{d}{q}=d$, $1\leq q<\frac{d+1}{d-1}$ and $\alpha = \frac{2q}{q+1}$, then
\begin{equation*}
\bigg\|\sum_j\lambda_j|e^{-itH}f_j|^2\bigg\|_{L^p_t((0,2\pi),L^q_x(\mathbb{R}^d))}\lesssim\|\lambda\|_{\ell^\alpha}
\end{equation*}
holds for all orthonormal systems $(f_j)_j$ in $L^2(\mathbb{R}^d)$ and all sequences $ \lambda=(\lambda_j)_j$ in $\ell^\alpha(\mathbb{C})$. This is sharp in the sense that, for such $p,q$, the estimate fails for all $\alpha > \frac{2q}{q+1}$. Furthermore, when $q = \frac{d+1}{d-1}$, the estimate \eqref{e:FS} holds for all $\alpha < \frac{2q}{q+1}$ and fails when $\alpha = \frac{2q}{q+1}$.
\end{theorem}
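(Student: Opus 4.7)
The plan is to reduce the statement to the free-Schrödinger orthonormal Strichartz estimates of Theorem \ref{t:s=0} by means of the Sjögren--Torrea lens transform, the very same device used in \cite{SjogrenTorrea} to identify the single-function estimates \eqref{e:classical} and \eqref{e:Hermite}. Concretely, the Mehler kernel representation of $e^{-itH}$ yields, on each of the finitely many open sub-intervals of $(0,2\pi)$ on which $\cos(\beta t)\neq 0$ (the constant $\beta$ being fixed by the normalisation of $H=-\Delta+|x|^2$), an identity of the form
\[
e^{-itH}f(x) = A(t)\, e^{i\phi(t,x)}\, \bigl(e^{is(t)\Delta} f\bigr)\!\bigl(x/a(t)\bigr),
\]
where $s(t)$ is a smooth diffeomorphism of each such sub-interval $I$ onto a subset $s(I)\subseteq\mathbb{R}$, the functions $A(t),a(t)>0$ are smooth, and $\phi(t,x)$ is a real phase. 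The data $f$ on the right is not modified by any unitary pre-composition, so orthonormality passes through trivially; the phase $\phi(t,x)$ is irrelevant on taking moduli squared.

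Squaring and summing in $j$, then taking the $L^q_x$-norm, the substitution $y = x/a(t)$ together with the scaling relation $\tfrac{2}{p}+\tfrac{d}{q}=d$ gives the pointwise-in-$t$ identity
\[
\bigg\|\sum_j \lambda_j |e^{-itH}f_j|^2\bigg\|_{L^q_x} = a(t)^{-2/p}\, \bigg\|\sum_j \lambda_j |e^{is(t)\Delta}f_j|^2\bigg\|_{L^q_y}.
\]
Raising to the $p$-th power and changing variables $s=s(t)$ on $I$, one observes that $a(t)^{-2}\,\mathrm{d}t = \mathrm{d}s$ (this being the derivative identity $s'(t) = a(t)^{-2}$ characteristic of the lens transform), so the factor $a(t)^{-2}$ is absorbed exactly and one obtains
\[
\bigg\|\sum_j \lambda_j |e^{-itH}f_j|^2\bigg\|_{L^p_t(I)L^q_x}^{p} = \int_{s(I)} \bigg\|\sum_j \lambda_j |e^{is\Delta}f_j|^2\bigg\|_{L^q_y}^{p}\,\mathrm{d}s.
\]
Since $s(I)\subseteq\mathbb{R}$, the right-hand side is bounded by $\|\lambda\|_{\ell^{\alpha}}^{p}$ via Theorem \ref{t:s=0} applied to the orthonormal system $(f_j)_j\subset L^2(\mathbb{R}^d)$, uniformly in $I$. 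Summing over the finitely many sub-intervals covering $(0,2\pi)$ completes the sufficiency claim; the same reasoning with $\alpha$ replaced by any $\alpha<\tfrac{2q}{q+1}$ handles the positive part at $q=\tfrac{d+1}{d-1}$.

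The sharpness claims follow from running the lens-transform identity in reverse. Any orthonormal system $(g_j)_j\subset L^2(\mathbb{R}^d)$ that is known to witness the failure of \eqref{e:FS} for $e^{it\Delta}$ (either for $\alpha>\tfrac{2q}{q+1}$ in the main range, or at the critical endpoint $q=\tfrac{d+1}{d-1}$ with $\alpha=\tfrac{2q}{q+1}$, as constructed in \cite{FLLS} and \cite{frank-sabin-1}) may be transported to an orthonormal system witnessing the same failure for $e^{-itH}$, because the quantitative identity above shows that the two mixed-norm quantities are comparable up to a finite multiplicative constant. The principal technical issue is the careful decomposition of $(0,2\pi)$ around the singularities of $s(t)$ (the zeroes of $\cos(\beta t)$), so that one may apply the free-Schrödinger bound on each sub-interval separately and harmlessly sum; a secondary check is the verification, from the explicit Mehler kernel, that the lens transform acts on the data as the identity (rather than through a $t$-dependent unitary), so that orthonormality is preserved for every fixed $t$ without any extra argument.
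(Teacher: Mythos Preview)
Your proposal is correct and follows essentially the same approach as the paper: both use the Sj\"ogren--Torrea lens transform to pass from $e^{-itH}$ on a basic interval to $e^{it\Delta}$ on $\mathbb{R}$ with the same initial data, invoke Theorem~\ref{t:s=0}, and then extend to $(0,2\pi)$ by symmetry. One small correction: your claim that the data is unmodified on \emph{every} sub-interval is not quite right. The paper covers $(0,2\pi)$ not by a separate lens transform on each piece but via the kernel identities $K_{-it}(x,x')=\overline{K_{it}(x,x')}$ and $K_{i(t+\frac{\pi}{2})}(x,x')=e^{i\frac{d\pi}{2}}K_{it}(-x,x')$, the first of which replaces $(f_j)_j$ by $(\overline{f_j})_j$; this is why the paper explicitly notes that orthonormality survives complex conjugation. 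This is a harmless adjustment to your argument, since the required modifications of the data are $t$-independent unitaries (reflection, conjugation), so orthonormality is preserved and Theorem~\ref{t:s=0} still applies.
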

If $K_{it}$ and $L_{it}$ denote the kernels of $e^{-itH}$ and $e^{it\Delta}$, respectively, then the proof rests on the transformation
\[
K_{i\sigma(t)}(x,x') = e^{-\frac{i}{2}t|x|^2}\langle t \rangle^{\frac{d}{2}} L_{i\frac{t}{2}}(\langle t \rangle x,x')
\]
for any $t > 0$. Here, $\sigma(t) = \frac{1}{2}\arctan (t)$ and $\langle t \rangle = (1 + t^2)^{\frac{1}{2}}$. From this, and using the scaling condition $\frac{2}{p}+\frac{d}{q}=d$, it follows (as in Theorem 1 in \cite{SjogrenTorrea} for a single function) that
\begin{equation*}
\bigg\|\sum_j\lambda_j|e^{-itH}f_j|^2\bigg\|_{L^p_t((0,\frac{\pi}{4}),L^q_x(\mathbb{R}^d))} = \bigg\|\sum_j\lambda_j|e^{it\Delta}f_j|^2\bigg\|_{L^p_t((0,\infty),L^q_x(\mathbb{R}^d))}.
\end{equation*}
To extend to $(0,2\pi)$ we use the fact that the kernel of $e^{-itH}$ satisfies $K_{-it}(x,x') = \overline{K_{it}(x,x')}$ and $K_{i(t + \frac{\pi}{2})}(x,x') = e^{i\frac{d\pi}{2}} K_{it}(-x,x')$, and the elementary fact that orthonormality of $(f_j)_j$ is preserved under complex conjugation.

\begin{acknowledgements}
This work was supported by JSPS Grant-in-Aid for Young Scientists A no. 16H05995 and JSPS Grant-in-Aid for Challenging Exploratory Research no. 16K13771-01 (Bez), grant from the Korean Government no. NRF-2017R1C1B1008215 (Hong), grants from the Korean Government no. NRF-2015R1\newline A2A2A05000956 and no. NRF-2015R1A4A1041675 (Lee), Grant-in-Aid for JSPS Research Fellow no. 17J01766 (Nakamura), and JSPS Grant-in-Aid for Scientific Research (C) no. 16K05209 (Sawano). Also, the first author would like to express his thanks to Fabricio Maci\`a for very enlightening discussions.
\end{acknowledgements}

\end{document}